\newtheorem{theorem}{Theorem}[section]
\newtheorem{proposition}[theorem]{Proposition}
\newtheorem{lemma}[theorem]{Lemma}
\newtheorem{corollary}[theorem]{Corollary}
\newtheorem{definition}[theorem]{Definition}
\def\diam{\mathrm{diam}}
\def\AG{\mathcal{AG}}
\def\mcD{\mathcal{D}}
\def\mcE{\mathcal{E}}
\def\mcF{\mathcal{F}}
\def\wwAG{\widetilde{\widetilde{\mathcal{AG}}}}
\numberwithin{equation}{section}
\begin{document}
\title[Convergence of energy forms on Sierpinski gaskets with added rotated triangle]{Convergence of energy forms on Sierpinski gaskets with added rotated triangle}

\author{Shiping Cao}
\address{Department of Mathematics, Cornell University, Ithaca 14853, USA}
\email{sc2873@cornell.edu}
\thanks{}

\subjclass[2010]{Primary 28A80; Secondary 31E05}

\date{}

\keywords{resistance metric, $\Gamma$-convergence, Sierpinski gaskets with added rotated triangles, existence, uniqueness}

\begin{abstract}
  We study the convergence of resistance metrics and resistance forms on a converging sequence of spaces. As an application, we study the existence and uniqueness of self-similar Dirichlet forms on Sierpinski gaskets with added rotated triangle. The fractals depend on a parameter in a continuous way. When the parameter is irrational, the fractal is not post critically finite (p.c.f.), and there are infinitely many ways that two cells intersect. In this case, we define the Dirichlet form as a limit in some $\Gamma$-convergence sense of the Dirichlet forms on p.c.f. fractals that approximate it.
\end{abstract}
\maketitle

\section{introduction}
The study of diffusion processes on fractals emerged as an independent research field in the late 80's, with initial interest coming from mathematical physicists working in the theory of disordered media \cite{AO,HBA,RTP}. On self-similar sets, as pioneering works, Kusuoka and Goldstein \cite{G,kus} independently constructed Brownian motions on the Sierpinski gasket. Important properties were later studied by Barlow and Perkins \cite{BP}, where estimates of transition densities and resolvent kernels are provided.  The original idea, which also applies to the Lindstr{\o}m's nested fractals \cite{Lindstrom}, is to define the Brownian motion as the limit of random walks on the approximating graphs. 

Rich extensions have been developed over the past decades. First, with the introduction of the Dirichlet forms (see the book \cite{FOT}), more analytical techniques are available. Kigami \cite{ki1,ki2} introduced the class of the post-critically finite (p.c.f.) self-similar sets, and constructed self-similar Dirichlet forms as limits of discrete energy forms on approximating graphs. Also, see \cite{HK} by Kumagai and Hambly for the transition density estimates on p.c.f. self-similar sets. The concepts of resistance forms and resistance metrics are raised to explain the idea, and the techniques apply to the broad class of fractals with finitely ramified cell structures \cite{T}, which includes finitely ramified graph-directed fractals \cite{CQ1,HN,HMT} and some Julia sets of polynomials \cite{FS,RT}. In a recent work, we also allow an infinite iterated function system (i.f.s.), see \cite{CQ2} for the Brownian motion on the golden ratio Sierpinski gasket. 

A more difficult extension is the Brownian motion on the class of Sierpinski carpets. The diffusion was first defined as a limit of the reflected Brownian motions \cite{BB} on domains in $\mathbb{R}^2$ that approximate the carpets, and was also constructed with the method of the Dirichlet forms by Kusuoka and Zhou \cite{KZ}. The equivalence of the two constructions remained unknown until 2009 \cite{BBKT}, where a uniqueness theorem was proven. Also, see \cite{BB2} for a higher dimensional extension, and \cite{BB1,BB2} for the transition density estimates. 

Although systematic frameworks (see books \cite{B,ki3,s}) on self-similar sets are developed, the techniques are based on the assumption that there are finitely many ways that two small copies of fractals generated by the i.f.s., which we usually call cells, can intersect. For example, in \cite{KZ}, the condition (GB) was raised to deal with the boundary. The class of Sierpinski gaskets with added rotated triangle, which we will study in this paper, were raised in \cite{B} as examples to show that the delicacy of the condition. 

Let $p_1=(\frac{1}{2},\frac{\sqrt{3}}{2})$, $p_2=(0,0)$ and $p_3=(1,0)$, which are the vertices of an equilateral triangle in $\mathbb{R}^2$, and define $F_i(x)=\frac{1}{2}x+\frac{1}{2}p_i$ for $i=1,2,3$. Then, the standard Sierpinski gasket is defined to be the unique compact set $\mathcal{SG}$ such that $\mathcal{SG}=\bigcup_{i=1}^3F_i\mathcal{SG}$. With a fixed parameter $\lambda\in (0,\frac{1}{2})$, the Sierpinski gasket with added rotated triangle $\AG_\lambda$, as the name implies, is the attractor
\[\AG_\lambda=\bigcup_{i=1}^4 F_{i,\lambda}\AG_\lambda,\]
where $F_{i,\lambda}=F_i$ for $i=1,2,3$ and $F_{4,\lambda}$ is the similarity map depending on $\lambda$ such that
\[F_{4,\lambda}(p_1)=(\frac{1}{4}+\lambda,\frac{\sqrt{3}}{4}),\quad F_{4,\lambda}(p_2)=(\frac12-\frac\lambda2,\frac{\sqrt{3}}{2}\lambda),\quad F_{4,\lambda}(p_3)=(\frac34-\frac\lambda2,\frac{\sqrt{3}}{4}-\frac{\sqrt{3}}{2}\lambda).\]
See Figure \ref{fig1} for an example with $\lambda=\frac{1}{\sqrt{8}}$. In particular, $\AG_\lambda$ is p.c.f. when $\lambda\in (0,\frac{1}{2})\cap\mathbb{Q}$, and $\AG_\lambda$ is non p.c.f. with infinitely many types of intersections between two cells (although they always intersect at a single point) when $\lambda \in (0,\frac{1}{2})\setminus \mathbb{Q}$.

\begin{figure}[htp]
	\includegraphics[width=6cm]{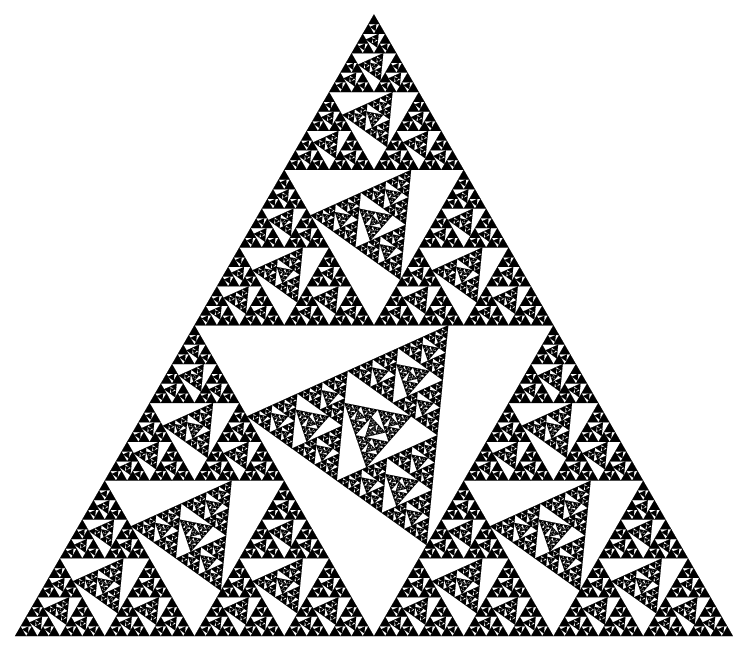}
	\caption{$\AG_{1/\sqrt{8}}$}\label{fig1}
\end{figure}

It is of great interest to study the self-similar Dirichlet forms on $\AG_\lambda,\lambda\in (0,\frac12)\setminus\mathbb{Q}$ as an explorative work to break the condition that cells intersect in finitely many ways. We hope this will lead to further studies on more complicated self-similar sets, which are not finitely ramified.

In this paper, we will answer the following two questions on $\AG_\lambda$.\vspace{0.15cm}

\noindent\textbf{Q1}. \emph{Is there a self-similar resistance form on $\AG_\lambda$ for any $\lambda\in (0,\frac{1}{2})$ (given suitable renormalization factors)? Is the form unique?}\vspace{0.15cm}

\noindent\textbf{Q2}. \emph{How do the forms depend on the parameter $\lambda$ (and the renormalization factors)?}\vspace{0.15cm}

Q1 has been a central problem in the field of analysis on fractals, which has been deeply studied on the p.c.f. self-similar sets \cite{M1,M2,Pe,Sabot} and on the Sierpinski carpets \cite{BBKT}, while Q2 is new in the fractal settings. We provide a positive answer to Q1 with Theorem \ref{thm36}, and also provide an answer to Q2 with Theorem \ref{thm38} and Corollary \ref{coro39}. Readers can refer to \cite{GMS,KS} for some consequences of Corollary \ref{coro39}.

We will begin our story with an investigation of Q2 in the more general setting of resistance spaces. For convenience, we consider a sequence of compact sets $A_n$ and a compact $A$, that are embedded in a same locally compact seperable metric space $(B,d)$ (in fact, compact by choosing a neighbour of $A$), and assume $A_n$ converges to $A$ in the sense of the Hausdorff metric. If there is some uniform estimate on the resistance metrics on $A_n$, we can find a resistance metric on $A$ as a limit by passing to a subsequence (Theorem \ref{thm29}). The corresponding forms converge in a $\Gamma$-convergence sense (Theorem \ref{thm213}), and the resolvent kernels converge (Theorem \ref{thm217}). In addition to our analytic results, we also need to point out here that in \cite{C1,CHK}, deep studies on convergencing resistance spaces are also done in a probability point of view. In particular, the weak convergence (in the Skorohod $J_1$-topology sense) of the associated Hunt processes is proven under mild assumptions (\cite{C1,CHK}).

In history, the convergence of quadratic forms on a converging sequence of spaces was studied in \cite{KS}, based on the theorems of $\Gamma$-convergence \cite{D} and Mosco's work \cite{Mosco}. Readers can find various consequences of the convergence of resolvent kernels in \cite{KS}, including the convergence of the spectrum. Some criteria are rasied in \cite{KS} for the existence of a converging subsequence of forms, but the conditions heavily depend on the energy measures, which are not convenient in the fractal settings, noticing that the energy measures are singular with respect to the self-similar measures \cite{Hino}. 

As an application of the theorems we developed on resistance spaces, we will take a method of approximating fractals with fractals towards Q1. This finishes our story.

At the end of this section, we briefly talk about the structure of this paper. Our paper can be devided into two main parts, Section 2 and Section 3-6. In Section 2, we will have an investigation on the convergence of resistance metrics and resistance forms. Then, in Section 3-6, we solve Q1 and Q2 on $\AG_\lambda,\lambda\in(0,\frac{1}{2})$. In Section 3, we introduce some important notations and the main results. In Section 4, we will provide a positive answer to Q1 when $\lambda$ is a dyadic rational. In Section 5, we will prove some resistance estimates to verify the conditions of Theorem \ref{thm29}. Finally, in Section 6, we apply the theorems in Section 2 to solve Q1 and Q2. Lastly, we end this paper with a short discussion in Section 7 about irregular cases, which means the renormalization factors can be greater than $1$.

Finally, we point out that our techniques apply to some other classes of fractals. For example, the higher dimensional analogs to $\AG_\lambda$, and we may also add more triangles.

\section{Convergence of resistance metric}
The resistance metrics and the resistance forms, introduced by J. Kigami \cite{ki3},  provide a convenient way to generate a Dirichlet form on a finitely ramified fractal. In this section, we assume that we have a sequence of resistance metrics defined on a converging sequence of sets, and try to define a limit resistance metric under some conditions that are verfiable.

We will consider the following settings. The conditions are clearly not optimal, and we hope they can be improved in the future. \vspace{0.15cm}

\noindent\textbf{(C1)}. \textit{$\{A_n\}_{n\geq 1}$ is a sequence of compact sets in a compact metric space $(B,d)$, $A$ is a compact subset of $B$, and 
	\[\lim_{n\to\infty}\delta(A_n,A)=0,\]
where $\delta$ is the Hausdorff metric on compact subsets of $B$.}

\noindent\textbf{(C2)}. \textit{For each $n\geq 1$, $\mu_n$ is a probability measure with $A_n$ being its support; $\mu$ is a probability measure with $A$ being its support. We assume $\mu_n\Rightarrow \mu$.}\vspace{0.15cm}

In (C1), the Hausdorff metric $\delta(A,A')$ between compact subsets $A,A'$ of $B$ is defined as \[\delta(A,A')=\inf\{\rho>0: A\subset B_\rho(A')\text{ and }A'\subset B_\rho (A)\},\]
where $B_\rho(x)=\{y\in B:d(x,y)<\rho\}$ is an open ball of diameter $\rho$, and $B_\rho(A)=\bigcup_{x\in A}B_\rho(x)$ is an open neighbourhood of $A$.

In (C2), $\mu_n\Rightarrow \mu$ means that $\mu_n$ converges weakly to $\mu$, i.e. 
\[\int_B fd\mu_n\to \int_B fd\mu,\quad \forall f\in C(B),\]
noticing that we always assume $B$ is compact throughout the paper. We use the same notations for bounded signed Borel measures.

In particular, if $\lambda_n\to\lambda\in (0,\frac{1}{2})$, then the sequence $\{\AG_{\lambda_n}\}_{n\geq1}$ and $\AG_\lambda$ satisfy condition (C1). See Section 3 for details. If we take the normalized Hausdorff measure on $\AG_\lambda,\lambda\in (0,\frac{1}{2})$, (C2) is satisfied. 

From Section 3 to Section 6, we will study $\AG_\lambda,\lambda\in (0,\frac{1}{2})$ as an application of the results in this section. 

\subsection{Lemmas about convergence} First, we need to understand how a sequence of functions $f_n\in C(A_n),n\geq 1$ converges to a function $f\in C(A)$. 
 
\begin{definition}\label{def21}
	Assume (C1), let $f_n\in C(A_n)$ for $n\geq 1$ and let $f\in l(A)$. For any topological space $X$, $\|\bullet\|_{C(X)}$ denotes the supremum norm on $C(X)$.
	
	(a). We write $f_n\rightarrowtail f$ if $f(x)=\lim\limits_{n\to\infty}f_n(x_n)$ for any $x\in A$ and $x_n\in A_n,n\geq 1$ such that $x_n\to x$ as $n\to\infty$. 
	
	(b). We say $\{f_n\}_{n\geq 1}$ is uniformly bounded if $\sup_{n\geq 1}\|f_n\|_{C(A_n)}<\infty$.
	
	(c). We say $\{f_n\}_{n\geq 1}$ is equicontinuous if 
	\[\lim_{\delta\to 0}\sup\{|f_n(x)-f_n(y)|:x,y\in A_n,d(x,y)<\delta,n\geq 1\}=0.\]
	
\noindent The concepts apply to a subsequence $\{f_{n_k}\}_{k\geq 1}$ in a natural way. 
\end{definition}

\begin{lemma}\label{lemma22}
	Assume (C1) and let $f_n\in C(A_n)$ for $n\geq 1$.
	
(a). If $\{f_n\}_{n\geq 1}$ is uniformly bounded and equicontinuous, then there is a subsequence $\{f_{n_k}\}_{k\geq 1}$ and $f\in C(A)$ such that $f_{n_k}\rightarrowtail f$.

(b). If $f_n\rightarrowtail f$ for some $f\in l(A)$, then $\{f_n\}_{n\geq 1}$ is uniformly bounded and equicontinuous, and $f\in C(A)$.  
\end{lemma}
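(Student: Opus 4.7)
The statement is an Arzel\`a--Ascoli theorem adapted to a varying domain, and my overall plan is a diagonal extraction along a countable dense subset of $A$ for part (a), combined with a gap--filling argument based on Hausdorff convergence for part (b).

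For part (a), I would fix a countable dense subset $\{x_k\}_{k\geq 1}\subset A$ and, using (C1), choose $x_{k,n}\in A_n$ with $x_{k,n}\to x_k$ for each $k$. Uniform boundedness of $\{f_n\}$ makes each sequence $\{f_n(x_{k,n})\}_n$ bounded, so a Cantor diagonal extraction produces a single subsequence $\{n_j\}$ and numbers $\alpha_k$ with $f_{n_j}(x_{k,n_j})\to\alpha_k$ for every $k$. The equicontinuity hypothesis, combined with $d(x_{k,n_j},x_{\ell,n_j})\to d(x_k,x_\ell)$, yields $|\alpha_k-\alpha_\ell|\leq\omega(d(x_k,x_\ell))$, where $\omega$ is the common modulus of continuity of $\{f_n\}$. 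Hence $x_k\mapsto\alpha_k$ extends by density to some $f\in C(A)$ with the same modulus. To verify $f_{n_j}\rightarrowtail f$, given $x\in A$ and $y_j\in A_{n_j}$ with $y_j\to x$, I would pick $x_k$ close to $x$ and bound $|f_{n_j}(y_j)-f(x)|$ by $|f_{n_j}(y_j)-f_{n_j}(x_{k,n_j})|+|f_{n_j}(x_{k,n_j})-\alpha_k|+|\alpha_k-f(x)|$, in which the first term is controlled by $\omega$, the second tends to $0$ by construction, and the third is small by continuity of $f$.

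For part (b), I would argue by contradiction in each clause. If $\{f_n\}$ is not uniformly bounded, there exist $n_k\uparrow\infty$ and $x_{n_k}\in A_{n_k}$ with $|f_{n_k}(x_{n_k})|\to\infty$; after a further extraction compactness of $B$ gives $x_{n_k}\to x$, and $d(x,A)\leq d(x,x_{n_k})+\delta(A_{n_k},A)\to 0$ forces $x\in A$. The key step is to fill in the gaps: for $n\notin\{n_k\}$ pick $x'_n\in A_n$ minimizing $d(\cdot,x)$, which tends to $x$ by (C1); together with $x'_{n_k}:=x_{n_k}$ this defines a full sequence $x'_n\in A_n$ with $x'_n\to x$, so $f_n(x'_n)\to f(x)$ by hypothesis, contradicting $|f_{n_k}(x'_{n_k})|\to\infty$. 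The failure of equicontinuity is treated identically, applied to two sequences $x_{n_k},y_{n_k}\in A_{n_k}$ with $d(x_{n_k},y_{n_k})\to 0$ and $|f_{n_k}(x_{n_k})-f_{n_k}(y_{n_k})|\geq\varepsilon_0$: both are extended to full sequences converging to a common $x\in A$, giving the contradiction $0\geq\varepsilon_0$. Continuity of $f$ then follows by approximating any $x,y\in A$ by $x_n,y_n\in A_n$ with $d(x_n,y_n)\to d(x,y)$ and using the equicontinuity just proved.

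The main technical obstacle I anticipate is precisely this gap--filling in part (b): the definition of $f_n\rightarrowtail f$ quantifies over sequences indexed by all of $\mathbb{N}$, whereas the negations of uniform boundedness and of equicontinuity only furnish data along subsequences. Hausdorff convergence in (C1), via the nearest--point selection above, is exactly what allows any such subsequence to be extended to a full sequence with the same limit. Once this device is in place, both parts reduce to standard Arzel\`a--Ascoli reasoning.
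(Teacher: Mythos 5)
Your proof is correct and follows essentially the same route as the paper: a diagonal extraction along a countable dense subset of $A$ for part (a), and a contradiction argument using the Hausdorff convergence $\delta(A_n,A)\to 0$ for part (b). Your explicit gap-filling step in (b) --- extending the subsequence $x_{n_k}\in A_{n_k}$ to a full sequence $x'_n\in A_n$ by nearest-point selection so that the definition of $\rightarrowtail$, which quantifies over full sequences, actually applies --- is a detail the paper leaves implicit, and it is a worthwhile clarification rather than a different approach.
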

\begin{proof}
(a). The lemma is an analogue to the Arzel\`{a}-Ascoli theorem. Let $\{x_m\}_{m\geq 1}$ be a countable dense subset of $A$. For each $m\geq 1$, choose a sequence $\{x_{m,n}\}_{n\geq 1}$ so that $x_{m,n}\in A_n,\forall n\geq1$ and $d(x_{m,n},x_m)\to 0$ as $n\to\infty$. Since $\{f_n\}_{n\geq 1}$ is uniformly bounded, by a diagonalization argument, there is a subsequence $\{n_k\}_{k\geq 1}$ so that the limit $\lim_{k\to\infty}f_{n_k}(x_{m,n_k})$ exists for any $m\geq 1$, and we denote $f(x_m)=\lim_{k\to\infty}f_{n_k}(x_{m,n_k})$. Clearly, for $m,m'\geq 1$, 
	\[
	|f(x_{m'})-f(x_m)|\leq \limsup_{n\to\infty}|f_n(x_{m',n})-f_n(x_{m,n})|,\quad d(x_{m'},x_m)=\lim\limits_{n\to\infty}d(x_{m',n},x_{m,n})
	\] 
	so $f$ is uniformly continuous on $\{x_m\}_{m\geq 1}$ as $\{f_n\}_{n\geq 1}$ is equicontinuous. Thus, $f$ extends to a continuous function on $A$. Finally, it is routine to check $f_{n_k}\rightarrowtail f$. For any $\varepsilon>0$ and $y_k\in A_{n_k},k\geq 1$ such that $y_k\to y$, by the equicontinuity of $\{f_n\}_{n\geq 1}\cup \{f\}$, we can choose $x_m$ close enough to $y$ and choose $N$ large enough so that $|f_{n_k}(y_k)-f(y)|\leq |f_{n_k}(y_k)-f_{n_k}(x_{m,n_k})|+|f_{n_k}(x_{m,n_k})-f(x_m)|+|f(x_m)-f(y)|<\varepsilon$ for any $k\geq N$. 

(b) is an analogue of the well-known result. If $\{f_n\}_{n\geq 1}$ is not uniformly bounded, then we can find a sequence $x_{n_k}\in A_{n_k},k\geq 1$ so that $|f_{n_k}(x_{n_k})|\to +\infty$ as $k\to\infty$. Clearly, by passing to a subsequence, we have $x_{n_k}$ converges in $B$ to $x$. Since $\delta(A_{n_k},A)\to 0$, we have $x\in \bigcap_{\delta>0}B_\delta(A)=A$. This contradicts $f_n\rightarrowtail f$. Similarly, we can show $\{f_n\}_{n\geq 1}$ is equicontinuous. Finally, $f$ is continuous by (a). 
\end{proof}

The following lemma helps clarify the meaning of $f_n\rightarrowtail f$. For short, we write $g_n\rightrightarrows g$ if $\{g_n\}_{n\geq 1}\cup\{g\}\subset C(B)$ and $\lim\limits_{n\to\infty}\|g_n-g\|_{C(B)}=0$ as $n\to\infty$. 

\begin{proposition}\label{prop23}
	Assume (C1), let $f_n\in C(A_n)$ for $n\geq 1$ and $f\in C(A)$. The following (a), (b) and (c) are equivalent. 
	
	(a). $f_n\rightarrowtail f$.
	
	(b). Let $\tilde{A}=(\{0\}\times A)\bigcup\big(\bigcup_{n\geq 1}(\{\frac{1}{n}\}\times A_n)\big)$, with the induced topology from $[0,1]\times B$. Define $\tilde{f}\in l(\tilde{A})$ by 
	\[\tilde{f}(t,x)=\begin{cases}
	f_n(x),&\text{ if }t=\frac{1}{n},\\
	f(x),&\text{ if }t=0.
	\end{cases}\]
	We have $\tilde{f}\in C(\tilde{A})$.
	
	(c). There exists $\{g_n\}_{n\geq 1}\cup \{g\}\subset C(B)$ such that $g_n|_{A_n}=f_n,\forall n\geq 1$, $g|_A=f$ and $g_n\rightrightarrows g$. 
\end{proposition}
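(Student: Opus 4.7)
The plan is to close the cycle (c)$\Rightarrow$(a)$\Rightarrow$(b)$\Rightarrow$(c). The equivalence (a)$\iff$(b) is essentially topological unpacking of how sequences in $\tilde{A}$ converge; (c)$\Rightarrow$(a) is an immediate three-term triangle inequality. The substance of the proposition is therefore (b)$\Rightarrow$(c), which I would handle via the Tietze extension theorem applied in the product space $[0,1]\times B$.

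First I would verify that $\tilde{A}$ is compact. Since the ambient $[0,1]\times B$ is a compact metric space, sequential closedness suffices. For any sequence $(t_j,x_j)\in\tilde{A}$, if the $t_j$ are bounded away from $0$, pass to a subsequence with constant $t_j=1/n$; its limit lies in $\{1/n\}\times A_n$ by compactness of $A_n$. Otherwise $t_j=1/n_j\to 0$ with $x_j\in A_{n_j}$, and the Hausdorff convergence $\delta(A_{n_j},A)\to 0$ from (C1) forces every accumulation point of $\{x_j\}$ to lie in $A$. This compactness is what allows us to formulate (b) in the first place and what makes the Tietze argument below work.

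For (a)$\iff$(b): at an interior point $(1/n,x)$ the value $1/n$ is isolated in $\{0\}\cup\{1/k:k\geq 1\}$, so any sequence $(t_j,x_j)\to(1/n,x)$ in $\tilde{A}$ eventually has $t_j=1/n$, and continuity of $\tilde{f}$ there reduces to $f_n\in C(A_n)$. At $(0,x)\in\{0\}\times A$, continuity of $\tilde{f}$ asserts both that $f(x_j)\to f(x)$ for $A\ni x_j\to x$, which is given by $f\in C(A)$, and that $f_{n_j}(x_j)\to f(x)$ whenever $n_j\to\infty$ and $A_{n_j}\ni x_j\to x$, which is exactly the condition $f_n\rightarrowtail f$ of (a). For (c)$\Rightarrow$(a), any $x_n\to x$ with $x_n\in A_n$ gives
\[|f_n(x_n)-f(x)|\leq \|g_n-g\|_{C(B)}+|g(x_n)-g(x)|\to 0\]
by $g_n\rightrightarrows g$ together with continuity of $g$ at $x$.

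For the main step (b)$\Rightarrow$(c), apply the Tietze extension theorem to $\tilde{f}\in C(\tilde{A})$, using that $\tilde{A}$ is closed in the normal (in fact compact metric) space $[0,1]\times B$, to obtain an extension $\tilde{g}\in C([0,1]\times B)$. Set $g_n(x)=\tilde{g}(1/n,x)$ and $g(x)=\tilde{g}(0,x)$; these are continuous on $B$ and restrict to $f_n,f$ respectively by construction. Uniform continuity of $\tilde{g}$ on the compact metric space $[0,1]\times B$ then yields, for each $\varepsilon>0$, some $\delta>0$ with $|\tilde{g}(s,y)-\tilde{g}(0,y)|<\varepsilon$ whenever $0\leq s<\delta$ and $y\in B$, so choosing $n$ with $1/n<\delta$ gives $\|g_n-g\|_{C(B)}\leq\varepsilon$ and hence $g_n\rightrightarrows g$. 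The only nontrivial ingredient is the compactness of $\tilde{A}$, which is where (C1) truly enters; the remainder is standard point-set topology plus Tietze.
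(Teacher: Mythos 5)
Your proposal is correct and follows essentially the same route as the paper: the cycle through (a), (b), (c) with the only substantive step being (b)$\Rightarrow$(c) via closedness of $\tilde{A}$ in $[0,1]\times B$ and the Tietze extension theorem, then slicing the extension at $t=1/n$ and $t=0$. The paper treats the other implications as trivial; your expanded verifications of them (and of the closedness of $\tilde{A}$) are accurate.
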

\begin{proof}
	`(a)$\Rightarrow$(b)' and `(c)$\Rightarrow$(a)' are trivial. 
	
	`(b)$\Rightarrow$(c)'. It is easy to see that  $\tilde{A}$ is a closed subset of $[0,1]\times B$. By the Tietze extension theorem, we have a continuous extension $\bar{f}\in C([0,1]\times B)$ of $\tilde{f}$. It suffices to take $g_n(x)=\bar{f}(\frac{1}{n},x)$ and $g(x)=\bar{f}(0,x)$ for any $x\in B,n\geq 1$. 
\end{proof}	

A similar proof gives us the following lemma.
\begin{lemma}\label{lemma24}
	Assume (C1), let $f_n\in C(A_n)$ for $n\geq 1$ and $g\in C(B)$. Suppose that $f_n\rightarrowtail g|_A$, then there are $\{g_n\}_{n\geq 1}\subset C(B)$ such that $g_n|_{A_n}=f_n$ for $n\geq 1$ and $g_n\rightrightarrows g$. 
\end{lemma}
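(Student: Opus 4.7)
My plan is to mimic the Tietze-extension strategy used in the proof of Proposition \ref{prop23}, but this time build the ``interpolating space'' over all of $B$ at $t=0$, rather than only over $A$.

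Concretely, first I would define
\[
\tilde{A}'=\big(\{0\}\times B\big)\cup\Big(\bigcup_{n\geq 1}\bigl(\{\tfrac{1}{n}\}\times A_n\bigr)\Big),
\]
equipped with the subspace topology from $[0,1]\times B$, and set
\[
\tilde{f}'(t,x)=\begin{cases} f_n(x), & t=\tfrac{1}{n},\ x\in A_n,\\ g(x), & t=0,\ x\in B.\end{cases}
\]
I would check that $\tilde{A}'$ is a closed subset of $[0,1]\times B$: the only accumulation points of $\bigcup_n\{\tfrac{1}{n}\}\times A_n$ not already contained in that union must lie in $\{0\}\times B$, which is in $\tilde{A}'$.

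The key step is the continuity of $\tilde{f}'$ on $\tilde{A}'$. At a point $(\tfrac{1}{n},x)$ the values $1/n$ are isolated in $\{0\}\cup\{1/k:k\geq 1\}$, so continuity reduces to continuity of $f_n$ on $A_n$. At a point $(0,x_0)$ with $x_0\in B$, I would take an arbitrary sequence $(t_k,x_k)\to(0,x_0)$ in $\tilde{A}'$ and split into two subsequences. For the subsequence with $t_k=0$, convergence is just continuity of $g$ on $B$. For the subsequence with $t_k=\tfrac{1}{n_k}$ and $x_k\in A_{n_k}$, the Hausdorff convergence $\delta(A_{n_k},A)\to 0$ forces $x_0\in A$; then to invoke $f_n\rightarrowtail g|_A$ (which is a statement about full sequences), I would extend $x_k$ to a full sequence $\hat x_n\in A_n$ with $\hat x_{n_k}=x_k$ and $\hat x_n\to x_0$, using that for indices $n\notin\{n_k\}$ one can pick any $y_n\in A_n$ with $d(y_n,x_0)\leq 2\delta(A_n,A)\to 0$. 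The hypothesis then yields $f_{n_k}(x_k)\to g(x_0)=\tilde{f}'(0,x_0)$, giving continuity.

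Once $\tilde{f}'\in C(\tilde{A}')$ is established, the Tietze extension theorem produces $\bar f\in C([0,1]\times B)$ agreeing with $\tilde{f}'$ on $\tilde{A}'$. Setting $g_n(x)=\bar f(\tfrac{1}{n},x)$ gives $g_n\in C(B)$ with $g_n|_{A_n}=f_n$, and setting $g(x)=\bar f(0,x)$ recovers the given $g$ on $B$ (here I use that $\bar f(0,\cdot)$ already equals $g$ by construction on $\{0\}\times B\subset\tilde{A}'$, so no notational conflict). Finally, since $\bar f$ is continuous on the compact space $[0,1]\times B$, it is uniformly continuous, hence
\[
\|g_n-g\|_{C(B)}=\sup_{x\in B}\bigl|\bar f(\tfrac{1}{n},x)-\bar f(0,x)\bigr|\longrightarrow 0,
\]
which is exactly $g_n\rightrightarrows g$. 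The only delicate step is the continuity check at $(0,x_0)$ for $x_0\in B\setminus A$, where one must combine Hausdorff convergence with the ``extend-to-a-full-sequence'' trick to convert the hypothesis $f_n\rightarrowtail g|_A$ into the needed pointwise statement; everything else is routine.
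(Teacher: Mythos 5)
Your proof is correct and is essentially the argument the paper intends: the paper gives no details beyond ``a similar proof'' to Proposition \ref{prop23}, and your modification — replacing $\{0\}\times A$ by $\{0\}\times B$ in the interpolating set so that the Tietze extension is forced to equal $g$ on the whole $t=0$ slice — is exactly the natural way to carry that out, with the continuity check at $(0,x_0)$ handled correctly via Hausdorff convergence and the extend-to-a-full-sequence device.
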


Lastly, we need to consider the measure. As an immediate consequence of Proposition \ref{prop23} `(a)$\Leftrightarrow$(c)', we have the following observation.

\begin{lemma}\label{lemma25}
	Assume (C1), (C2). Let $f_n\in C(A_n)$ for $n\geq 1$ and let $f\in C(A)$. If $f_n\rightarrowtail f$, then we have $f_n\mu_n\Rightarrow f\mu$.  
\end{lemma}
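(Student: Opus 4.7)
The plan is to reduce the claim to a direct application of the weak convergence $\mu_n\Rightarrow\mu$ by lifting the $f_n$ and $f$ to continuous functions on the ambient space $B$. By definition, $f_n\mu_n\Rightarrow f\mu$ means $\int_B h\,d(f_n\mu_n)\to \int_B h\,d(f\mu)$ for every $h\in C(B)$, i.e.\ $\int_{A_n} hf_n\,d\mu_n\to \int_A hf\,d\mu$. The key tool is Proposition \ref{prop23}, specifically the implication (a)$\Rightarrow$(c): since $f_n\rightarrowtail f$, there exist $\{g_n\}_{n\ge 1}\cup\{g\}\subset C(B)$ with $g_n|_{A_n}=f_n$, $g|_A=f$, and $g_n\rightrightarrows g$ uniformly on $B$.

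Fix $h\in C(B)$. Using that $\mu_n$ is supported on $A_n$ and $\mu$ on $A$, I would write
\[
\int_B h\,d(f_n\mu_n)-\int_B h\,d(f\mu)=\int_B hg_n\,d\mu_n-\int_B hg\,d\mu,
\]
and split this as
\[
\int_B h(g_n-g)\,d\mu_n + \Bigl(\int_B hg\,d\mu_n-\int_B hg\,d\mu\Bigr).
\]
The first term is bounded by $\|h\|_{C(B)}\|g_n-g\|_{C(B)}\cdot \mu_n(B)=\|h\|_{C(B)}\|g_n-g\|_{C(B)}\to 0$ by the uniform convergence $g_n\rightrightarrows g$. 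The second term tends to $0$ because $hg\in C(B)$ and $\mu_n\Rightarrow\mu$ by hypothesis (C2). Since $h\in C(B)$ is arbitrary, this yields $f_n\mu_n\Rightarrow f\mu$.

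There is no real obstacle here: the entire content of the lemma is in Proposition \ref{prop23}, which allows us to replace pointwise-along-sequences convergence with honest uniform convergence on $B$, after which the weak convergence of measures does the rest. The only mild point to be careful about is that $\mu_n$ and $\mu$ need not be equal on $B\setminus A_n$ or $B\setminus A$, but since they vanish there, the integrals over $B$ and over $A_n$, respectively $A$, agree.
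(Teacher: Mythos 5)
Your proof is correct and is exactly the argument the paper intends: the paper states Lemma \ref{lemma25} as an immediate consequence of Proposition \ref{prop23} (a)$\Leftrightarrow$(c), and your two-term splitting using $g_n\rightrightarrows g$ together with $\mu_n\Rightarrow\mu$ simply fills in the routine details the paper omits.
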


\subsection{Convergence of resistance metric} On a finite set $V$, a quadratic form $(\mcE,l(V))$ is called a \textit{resistance form} on $V$ if it takes the form
\[\mcE(f)=\frac{1}{2}\sum_{x\neq y}c_{x,y}\big(f(x)-f(y)\big)^2,\quad\forall f\in l(V)\]
where $c_{x,y}=c_{y,x}\in [0,\infty)$ is called the conductance between $x,y$, and in addition $\mcE(f)=0$ if and only if $f\in Constants$, where we denote the space of constant functions by $Constants$. Since the domain is always $l(V)$, we simply refer to $\mathcal{E}$ for the form from time to time.

The effecitive resistance $R$ associated with $\mcE$ is defined by $R(x,y)=\sup\{\frac{|f(x)-f(y)|^2}{\mcE_V(f)}:f\in l(V)\setminus constants\}$. It is well-known that $R$ is a metric on $V$. 

\begin{definition}[\cite{ki3}]\label{def26}
	Let $X$ be a set. $R\in l(X\times X)$ is called a resistance metric on $X$ if for any finite subset $V$ of $X$, there exists a resistance form $\mcE_V$ on $V$ such that 
	\[R(x,y)=\sup\{\frac{|f(x)-f(y)|^2}{\mcE_V(f)}:f\in l(V)\setminus Constants\}.\]
	Let $\mathcal{RM}(X)$ denote the set of resistance metrics on $X$. 
\end{definition}

In this subsection, we will see that given some good resistance metrics on $A_n$, we can find a limit resistance metric on $A$. 

\begin{lemma}\label{lemma27}
	Let $(\mcE_1,l(V))$ and $(\mcE_2,l(V))$ be two resistance forms on a finite set $V$ with $\#V=N\geq 2$. For $i=1,2$, let $R_i$ be the resistance metric associated with $\mcE_i$, i.e. 
	\[R_i(x,y)=\sup\{\frac{|f(x)-f(y)|^2}{\mcE_i(f)}:f\in l(V)\setminus Constants\}.\]  
	Then, we have 
	\[\frac{2}{N(N-1)}\min_{x\neq y}\frac{R_1(x,y)}{R_2(x,y)}\mcE_1(f)\leq \mcE_2(f)\leq \frac{N(N-1)}{2}\max_{x\neq y}\frac{R_1(x,y)}{R_2(x,y)}\mcE_1(f),\quad\forall f\in l(V).\]
\end{lemma}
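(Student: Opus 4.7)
The plan is to exploit the two defining properties of a resistance form: the variational inequality $(f(x)-f(y))^2 \le R(x,y)\,\mcE(f)$ for every $f\in l(V)$, and the elementary comparison $c_{x,y}\,R(x,y)\le 1$ for each pair of distinct vertices $x,y$, where $c_{x,y}$ is the conductance appearing in $\mcE$. The first is just the definition of $R$ via the supremum. The second is the classical fact that the effective resistance between two nodes is bounded by the direct edge resistance $1/c_{x,y}$ (cutting all other edges can only increase resistance, i.e.\ Rayleigh's monotonicity, or equivalently testing the dual variational formulation of $R(x,y)^{-1}$ with a single-edge current from $x$ to $y$).

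With these two inputs in hand, I would prove only the upper bound; the lower bound then follows immediately by swapping the roles of $\mcE_1$ and $\mcE_2$ and inverting the resulting inequality, which converts a $\max$ of ratios into a $\min$ of reciprocal ratios. For the upper bound, write $\mcE_2(f)=\frac12\sum_{x\ne y}c^{(2)}_{x,y}(f(x)-f(y))^2$ and apply the variational inequality for $\mcE_1$ to each term to get $(f(x)-f(y))^2\le R_1(x,y)\mcE_1(f)$. Factoring $R_1(x,y)=\frac{R_1(x,y)}{R_2(x,y)}R_2(x,y)$ and pulling out the maximum of the ratio yields
\[\mcE_2(f)\le \tfrac12\,\mcE_1(f)\max_{x\ne y}\tfrac{R_1(x,y)}{R_2(x,y)}\sum_{x\ne y}c^{(2)}_{x,y}R_2(x,y).\]
Then the bound $c^{(2)}_{x,y}R_2(x,y)\le 1$ reduces the sum to at most $N(N-1)$, giving the claimed upper estimate.

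The only subtle point is the inequality $c_{x,y}R(x,y)\le 1$, which I would either cite directly from Kigami's monograph \cite{ki3} or, if a self-contained derivation is preferred, prove in one line by plugging the indicator-style test function $f(z)=\mathbf{1}_{\{z=x\}}$ into the dual characterization $R(x,y)^{-1}=\inf\{\mcE(f):f(x)=1,\,f(y)=0\}$ after first short-circuiting all vertices other than $x,y$; equivalently, observe that the network obtained by keeping only the single edge $\{x,y\}$ has larger effective resistance, and its resistance is $1/c_{x,y}$. I do not expect any genuine obstacle here; the statement is essentially a packaging lemma, and the constants $N(N-1)/2$ arise solely from the crude bound obtained by summing over all $\binom{N}{2}$ edges without using any structural information about the underlying graphs. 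Sharper constants are possible but are not needed for the later applications in Section 2.
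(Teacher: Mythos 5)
Your proposal is correct and follows essentially the same route as the paper: both arguments rest on exactly the two inputs you identify, namely the variational bound $(f(x)-f(y))^2\le R_1(x,y)\,\mcE_1(f)$ and the conductance bound $c_{x,y}R(x,y)\le 1$, combined with the crude count of $N(N-1)$ ordered pairs; the paper merely arranges the algebra as a ratio of two sums rather than pulling the maximum out term by term. The symmetry reduction of the lower bound to the upper bound is also valid, since $1/\max_{x\ne y}\frac{R_2}{R_1}=\min_{x\ne y}\frac{R_1}{R_2}$.
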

\begin{proof}
	For $i=1,2$, let $c_{x,y,i}$ denote the conductances between $x,y$ associated with $\mcE_i$, i.e. $\mcE_i$ have the form $\mcE_i(f)=\frac{1}{2}\sum_{x\neq y}c_{x,y,i}\big(f(x)-f(y)\big)^2$. It is clear that $c_{x,y,i}\leq R_i(x,y)^{-1},\forall x\neq y,i=1,2.$ So for any $f\in l(V)$, we have
	\[
	\max_{x\neq y}R_i(x,y)^{-1}\big(f(x)-f(y)\big)^2\leq\mcE_i(f)\leq\frac{1}{2}\sum_{x\neq y}R_i(x,y)^{-1}\big(f(x)-f(y)\big)^2.
	\]
	Thus, for any $f\in l(V)\setminus Constants$,
	\[
	\begin{aligned}
	\frac{\mcE_2(f)}{\mcE_1(f)}&\leq \frac{1}{2}\frac{\sum_{x\neq y}R_2(x,y)^{-1}\big(f(x)-f(y)\big)^2}{\max_{x\neq y}R_1(x,y)^{-1}\big(f(x)-f(y)\big)^2}\\
	&\leq \frac{N(N-1)}{2}\frac{\sum_{x\neq y}R_2(x,y)^{-1}\big(f(x)-f(y)\big)^2}{\sum_{x\neq y}R_1(x,y)^{-1}\big(f(x)-f(y)\big)^2}\leq \frac{N(N-1)}{2}\max_{x\neq y}\frac{R_1(x,y)}{R_2(x,y)}.
	\end{aligned}
	\]
	The other direction follows from a same argument.  
\end{proof}

In the following, $\mathcal{RF}(V)$ denotes the set of resistance forms on a finite set $V$, and $\mathcal{M}(X)$ denotes the set of metrics on a set $X$.

\begin{lemma}\label{lemma28}
	Let $V$ be a finite set. We assign the topology on $\mathcal{RM}(V),\mathcal{M}(V)$ and $\mathcal{RF}(V)$ by naturally embedding them into $\mathbb{R}^{\frac{N(N-1)}{2}}$, where $N=\#V$.
	
	(a). $\mathcal{RM}(V)$ is a closed subset of $\mathcal{M}(V)$.
	
	(b). The natural correspondence $\mathcal{RM}(V)\to \mathcal{RF}(V)$ is a homeomorhpism. 
\end{lemma}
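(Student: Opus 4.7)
The plan is to use Lemma \ref{lemma27} with a fixed reference form to reduce both (a) and (b) to a compactness argument in the finite-dimensional space of conductances. Let $\mathcal{E}_0$ denote the resistance form on $V$ with all pairwise conductances equal to $1$, and let $R_0$ be its effective resistance metric. For any $R \in \mathcal{RM}(V)$ with associated form $\mathcal{E}$ and conductances $(c_{x,y})_{x\neq y}$, Lemma \ref{lemma27} bounds $\mathcal{E}$ between multiples of $\mathcal{E}_0$ with constants controlled by $\max_{x\neq y} R_0(x,y)/R(x,y)$ and its reciprocal. Since the conductances are linear functionals of the quadratic form on the finite-dimensional space $l(V)$, this translates into a pointwise bound $0 \le c_{x,y} \le C(R)$ with $C(R)$ continuous in $\min_{x \neq y} R(x,y)$.

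For (a), take $R_n \in \mathcal{RM}(V)$ with $R_n \to R$ in $\mathcal{M}(V)$. Because $R$ is a metric, $\min_{x \neq y} R_n(x,y)$ stays bounded away from $0$ and $\max_{x \neq y} R_n(x,y)$ stays bounded above, uniformly in $n$. The associated conductance vectors $(c_{x,y,n})$ therefore lie in a compact subset of $\mathbb{R}^{N(N-1)/2}_{\geq 0}$; a convergent subsequence $c_{x,y,n_k} \to c_{x,y}$ defines a limit quadratic form $\mathcal{E}$. Passing to the limit in the Lemma \ref{lemma27} bounds shows $\mathcal{E}$ is still comparable to $\mathcal{E}_0$, hence nondegenerate, so $\mathcal{E} \in \mathcal{RF}(V)$. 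To identify its effective resistance with $R$, I use continuity of $\mathcal{E} \mapsto R_\mathcal{E}$: on the open region of nondegenerate conductances, $R_\mathcal{E}(x,y) = \bigl(\min\{\mathcal{E}(f) : f(x)=1,\, f(y)=0\}\bigr)^{-1}$ is obtained by solving a nonsingular linear system involving the combinatorial Laplacian, which depends continuously on the conductances. Hence $R_{n_k} = R_{\mathcal{E}_{n_k}} \to R_\mathcal{E}$, and combined with $R_n \to R$ this yields $R_\mathcal{E} = R$, so $R \in \mathcal{RM}(V)$.

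For (b), write $\Psi : \mathcal{RM}(V) \to \mathcal{RF}(V)$ for the natural correspondence, so that $\Psi^{-1}(\mathcal{E}) = R_\mathcal{E}$. The map $\Psi^{-1}$ is continuous by the same linear-algebra argument just used, and $\Psi$ is a bijection by Kigami's classical theorem that a resistance form on a finite set is uniquely determined by its effective resistance metric. Continuity of $\Psi$ then follows from the subsequence argument of (a): if $R_n \to R$ in $\mathcal{RM}(V)$, every subsequence of $\{\Psi(R_n)\}$ admits a further subsequence converging to some $\mathcal{E}_\ast \in \mathcal{RF}(V)$ with $\Psi^{-1}(\mathcal{E}_\ast) = R$; by injectivity of $\Psi^{-1}$ this forces $\mathcal{E}_\ast = \Psi(R)$, so the whole sequence $\Psi(R_n)$ converges to $\Psi(R)$. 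The main obstacle is precisely the injectivity of $\Psi^{-1}$, namely the uniqueness of the resistance form producing a prescribed effective resistance on the finite set $V$; I would import this from Kigami's framework \cite{ki3} rather than reprove it, after which everything else is a routine combination of Lemma \ref{lemma27} with compactness in $\mathbb{R}^{N(N-1)/2}$.
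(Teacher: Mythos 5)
Your proposal is correct and follows essentially the same route as the paper: both use Lemma \ref{lemma27} to get uniform two-sided comparability with a fixed reference form (the paper uses $\mcE_1$ where you use the all-conductances-one form), extract a convergent subsequence of conductance vectors from the resulting compactness in $\mathbb{R}^{N(N-1)/2}$, observe the limit form is nondegenerate, invoke continuity of the form-to-metric map to identify the limit resistance metric, and finish with the subsequence-plus-uniqueness argument. Your added detail on why $\mcE\mapsto R_\mcE$ is continuous (nonsingular Laplacian system) fills in what the paper dismisses as "easy to see," but the argument is the same.
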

\begin{proof}
	Let $\{R_n\}_{n\geq1}\subset\mathcal{RM}(V)$, and let $\mcE_n$ be the resistance form associated with $R_n$. Assume that there is $R\in \mathcal{M}(V)$ such that $\lim_{n\to\infty}R_n(x,y)=R(x,y),\forall x,y\in V$. We need to show that $R\in \mathcal{RM}(V)$ and $\mcE_n\to \mcE$, where $\mcE$ is the resistance form associated with $R$. 
	
	For $n\geq 1$, we write $\mcE_n(f)=\frac{1}{2}\sum_{x\neq y}c_{x,y,n}\big(f(x)-f(y)\big)^2,\forall f\in l(V)$. By choosing a subsequence, we have the limit $c_{x,y}=\lim_{n_k\to\infty}c_{x,y,n_k}$ exists for any $x\neq y\in V$, and we define $\mcE$ by 
	\[\mcE(f)=\frac{1}{2}\sum_{x\neq y}c_{x,y}\big(f(x)-f(y)\big)^2,\quad\forall f\in l(V).\]
	By Lemma \ref{lemma27}, there exists $1\leq c<\infty$ such that 
	\[c^{-1}\mcE_1(f)\leq \mcE_n(f)\leq c\mcE_1(f),\quad\forall f\in l(V),\forall n\geq 1. \]
	As a consequence, $\mcE(f)>0$ for any $f\in l(V)\setminus Constants$. Thus $\mcE\in \mathcal{RF}(V)$ and $\mcE_{n_k}\to \mcE$. It is easy to see that the correspondence $\mathcal{RF}(V)\to\mathcal{RM}(V)$ is continuous, so $R$ is the resistance metric associated with $\mcE$. For each subsequence, we can do the same construction, and the limit $\mcE$ is the same since it is uniquely determined by $R$. So $\mcE_n\to\mcE$ as $n\to\infty$.
\end{proof}

Returning to the basic setting (C1), the following theorem provides a condition that we can find a converging subsequence of resistance metrics.

\begin{theorem}\label{thm29}
	Assume (C1). Let $R_n\in \mathcal{RM}(A_n)$ for $n\geq 1$, and assume  
	\[\psi_1\big(d(x,y)\big)\leq R_n(x,y)\leq \psi_2\big(d(x,y)\big),\quad\forall n\geq 1,\forall x,y\in A_n \]
	for some $\psi_1,\psi_2\in C[0,\infty)$ with $\psi_1(0)=\psi_2(0)=0$ and $\psi_2(t)\geq \psi_1(t)>0$ for $t>0$. 
	
	Then, there exists $R\in \mathcal{RM}(A)$ and a subsequence $\{n_k\}_{k\geq 1}$ such that $R_{n_k}\rightarrowtail R$. Clearly,
	\[\psi_1\big(d(x,y)\big)\leq R(x,y)\leq \psi_2\big(d(x,y)\big),\quad\forall x,y\in A.\]
\end{theorem}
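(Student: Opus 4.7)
The plan is to treat each $R_n$ as a continuous function on $A_n\times A_n\subset B\times B$, use an Arzelà--Ascoli style compactness argument to extract a subsequential limit $R\in C(A\times A)$, and then verify that $R$ is a resistance metric on $A$ by reducing to finite subsets and invoking Lemma \ref{lemma28}.

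First, I would check that $\{R_n\}_{n\geq 1}$, viewed as continuous functions on $A_n\times A_n$, is uniformly bounded and equicontinuous. Boundedness follows from $R_n(x,y)\leq \psi_2(d(x,y))\leq \psi_2(\diam B)<\infty$. Equicontinuity follows from the triangle inequality satisfied by any resistance metric:
\[
|R_n(x,y)-R_n(x',y')|\leq R_n(x,x')+R_n(y,y')\leq \psi_2(d(x,x'))+\psi_2(d(y,y')),
\]
combined with the uniform continuity of $\psi_2$ on $[0,\diam B]$. Since $A_n\to A$ in Hausdorff metric in $B$ implies $A_n\times A_n\to A\times A$ in Hausdorff metric in $B\times B$, the two-variable analog of Lemma \ref{lemma22}(a) (proved by the same diagonalization on a countable dense subset of $A\times A$) yields a subsequence $\{n_k\}$ and $R\in C(A\times A)$ with $R_{n_k}\rightarrowtail R$. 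The two-sided bounds pass to the limit along converging sequences, giving $\psi_1(d(x,y))\leq R(x,y)\leq \psi_2(d(x,y))$ on $A$.

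To see that $R$ is a resistance metric, I would first note that $R$ is a metric on $A$: symmetry and triangle inequality pass to the limit, and non-degeneracy follows from $R(x,y)\geq \psi_1(d(x,y))>0$ for $x\neq y$. For the resistance structure, fix any finite $V=\{x_1,\ldots,x_N\}\subset A$, and pick approximants $x_{i,k}\in A_{n_k}$ with $x_{i,k}\to x_i$; for $k$ large enough the $x_{i,k}$ are pairwise distinct, because $R_{n_k}(x_{i,k},x_{j,k})\to R(x_i,x_j)\geq \psi_1(d(x_i,x_j))>0$ whenever $i\neq j$. Setting $V_k=\{x_{1,k},\ldots,x_{N,k}\}$ and identifying $V_k$ with $V$ via $x_{i,k}\leftrightarrow x_i$, the restrictions $R_{n_k}|_{V_k\times V_k}$ lie in $\mathcal{RM}(V)$ and converge entrywise to $R|_{V\times V}$. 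By Lemma \ref{lemma28}(a), the pointwise limit $R|_V$ lies in $\mathcal{RM}(V)$, which produces the desired resistance form on $V$; hence $R\in \mathcal{RM}(A)$.

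The main obstacle is precisely that pointwise limits of resistance metrics need not a priori be resistance metrics, since the associated conductances might degenerate; this is exactly what the closedness statement in Lemma \ref{lemma28}(a) rules out, relying in turn on the two-sided comparison in Lemma \ref{lemma27} to keep the conductances bounded away from $0$ and $\infty$ along the subsequence. Once Lemma \ref{lemma28} is invoked at the finite-set level, the remainder of the argument is an Arzelà--Ascoli extraction and a straightforward approximation of finite subsets of $A$ by finite subsets of $A_{n_k}$.
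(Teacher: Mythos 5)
Your proposal is correct and follows essentially the same route as the paper: an Arzel\`{a}--Ascoli extraction via Lemma \ref{lemma22} applied to $R_n$ on $A_n\times A_n$, passing the two-sided $\psi_1,\psi_2$ bounds to the limit, and verifying the resistance-metric property by approximating finite subsets of $A$ by points of $A_{n_k}$ and invoking the closedness in Lemma \ref{lemma28}(a). You supply slightly more detail than the paper does on two points it leaves implicit — the equicontinuity of $\{R_n\}$ via the triangle inequality together with $\psi_2$, and the eventual distinctness of the approximating points $x_{i,k}$ — both of which are correct and welcome.
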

\begin{proof}
	By Lemma \ref{lemma22}, there exist a subsequence $\{n_k\}_{k\geq 1}$ and $R\in C(A\times A)$ so that $R_{n_k}\rightarrowtail R$. As $d(x_n,y_n)\to d(x,y)$ for any $x_n\to x,y_n\to y$, we have 
	\[R(x,y)=\lim_{k\to\infty}R_{n_k}(x_{n_k},y_{n_k})\geq \lim_{k\to\infty}\psi_1\big(d(x_{n_k},y_{n_k})\big)=\psi_1\big(d(x,y)\big).\]
	So $R\in \mathcal{M}(A)$. The upper bound $R(x,y)\leq \psi_2\big(d(x,y)\big)$ follows from a similar argument.
	
	It remains to show that $R\in \mathcal{RM}(A)$. Without loss of generality, we assume that $R_n\rightarrowtail R$. Let $V=\{x_m\}_{m=1}^M$ be a finite subset of $A$. For each $1\leq m\leq M$, we can choose a sequence $\{x_{m,n}\}_{n\geq1}$ with $x_{m,n}\in A_n,\forall n\geq 1$, so that $x_{m,n}\to x_m$ as $n\to\infty$. Clearly, for each $n\geq 1$, $R^{(n)}$, which is defined by $R^{(n)}(x_m,x_{m'})=R_n(x_{m,n},x_{m',n}),\forall 1\leq\forall m,m'\leq M$, is a resistance metric on $V$. Since $R(x_m,x_{m'})=\lim\limits_{n\to\infty}R^{(n)}(x_m,x_{m'})$, $R\in \mathcal{RM}(A)$ by Lemma \ref{lemma28} (a).
\end{proof}

\subsection{$\Gamma-$convergence of resistance forms} In the previous subsection, we see that in some cases, the resistance metrics converge in the sense $R_n\rightarrowtail R$. To make the observation useful, we study some consequences of such convergence. In this subsection, we focus on the resistance forms. 

In this paper, for any $f,g$ in $l(X)$, $f\vee g$ is the function defined as $f\vee g(x)=\max\{f(x),g(x)\}$,  and $f\wedge g$ is the function defined as $f\wedge g(x)=\min \{f(x),g(x)\}$.

\begin{definition}[\cite{ki3}]\label{def210}
	Let $X$ be a set, and $l(X)$ be the space of all real-valued functions on $X$. A pair $(\mathcal{E},\mathcal F)$ is called a \emph{resistance form} on $X$ if it satisfies the following  conditions:
	
	\noindent (RF1). $\mathcal{F}$ is a linear subspace of $l(X)$ containing constants and $\mathcal{E}$ is a nonnegative symmetric quadratic form on $\mathcal F$; $\mathcal{E}(f):=\mathcal{E}(f,f)=0$ if and only if $f$ is constant on X.
	
	\noindent (RF2). Let `$\sim$' be an equivalent relation on $\mathcal{F}$ defined by $f\sim g$ if and only if $f-g$ is constant on X. Then $(\mathcal{F}/\sim, \mathcal{E})$ is a Hilbert space.
	
	\noindent (RF3). For any finite subset $V\subset X$ and  any $u\in l(V)$, there exists a function $f\in \mathcal{F}$ such that $f|_V=u$.
	
	\noindent (RF4). For any distinct $p,q\in X$, $R(p,q):=\sup\{\frac{|f(p)-f(q)|^2}{\mathcal{E}(f)}:f\in \mathcal{F}\setminus Constants\}$ is finite.
	
	\noindent (RF5). If $f\in \mathcal{F}$, then $\bar{f}=(f\vee 0)\wedge 1\in \mathcal{F}$ and $\mathcal{E}(\bar{f})\leq \mathcal{E}(f)$.
	
Following \cite{ki3}, we write $\mathcal{RF}(X)$ for the set of resistance forms on $X$.
\end{definition}

Given a resistance form, (RF4) defines a corresponding resistance metric on $X$; for the reverse direction, if we have a resistance metric $R$ on $X$ so that $(X,R)$ is separable, then we have a unique resistance form $(\mcE,\mcF)$ on $X$ so that $R$ is the associated resistance metric. See \cite{ki3} for details.

\begin{definition}\label{def211}
Assume (C1). Let $(\mcE_n,\mcF_n)$ be quadratic forms defined on $C(A_n)$, and $(\mcE,\mcF)$ be a quadratic form defined on $C(A)$. We say $(\mcE_n,\mcF_n)$ $\Gamma$-converges to $(\mcE,\mcF)$ on $C(B)$ if and only if (a),(b) hold:

(a). If $f_n\rightrightarrows f$, where $\{f_n\}_{n\geq 1}\cup\{f\}\subset C(B)$, then
\[\mcE(f|_A)\leq \liminf_{n\to\infty} \mcE_n(f_n|_{A_n}).\]

(b). For any $f\in C(B)$, there exists a sequence $\{f_n\}_{n\geq 1}\subset C(B)$ such that $f_n\rightrightarrows f$ and 
\[\mcE(f|_A)=\lim_{n\to\infty} \mcE_n(f_n|_{A_n}). \] 
\end{definition}
 
\noindent\textbf{Remark 1.} In Definition \ref{def211} and in the remaining of this paper, for a nonnegative quadratic form $(\mcE,\mcF)$ on a Banach space $H$, we always set $\mcE(f)=+\infty$, if $f\notin \mcF$. So $\mcE:H\to \overline{\mathbb{R}}_+$, and $\mcF=\{f\in H:\mcE(f)<+\infty\}$.

\noindent\textbf{Remark 2.} Let $(X,d)$ be a metric space, let $Y$ be a subset of $X$, and let $(\mcE,\mcF)$ be a form defined on $C(Y)$. For convenience, we will simply write $\mcE(f)=\mcE(f|_A)$ for $f\in C(X)$ if no confusion is caused.

\noindent\textbf{Remark 3.} By Proposition \ref{prop23} and Lemma \ref{lemma24}, we can restate (a),(b) as follows:

(a'). If $f_n\rightarrowtail f$, where $f_n\in C(A_n),\forall n\geq 1$ and $f\in C(A)$, then
\[\mcE(f)\leq \liminf_{n\to\infty} \mcE_n(f_n).\]

(b'). For any $f\in C(A)$, there exists a sequence $\{f_n\}_{n\geq 1}$ such that $f_n\in C(A_n),\forall n\geq 1$,$f_n\rightarrowtail f$ and 
\[\mcE(f)=\lim_{n\to\infty} \mcE_n(f_n). \]

\begin{definition}\label{def212}
	Let $(\mcE,\mcF)\in \mathcal{RF}(X)$, and let $V$ be a finite subset of $X$. We define the \emph{trace} $[\mcE]_V$ of $(\mcE,\mcF)$ onto $V$ as follows,
	\[[\mcE]_V(f)=\min\{\mcE(g):g\in \mcF,g|_{V}=f\},\qquad \forall f\in l(V).\]
	Then $\big([\mcE]_V,l(V)\big)$ is a resistance form on $V$. 
	
	For any $f\in l(V)$, the unique function $g\in \mcF$ such that $\mcE(g)=[\mcE]_V$ is called the \emph{harmonic extension} of $f$ (with respect to $\mcE$ or $(\mcE,\mcF)$).
\end{definition}

\noindent\textbf{Remark}. Let $R$ be the resistance metric associated with $(\mcE,\mcF)$. Then, $[\mcE]_V$ is the unique resistance form on $V$ such that $R(x,y)=\sup\{\frac{|f(x)-f(y)|^2}{[\mcE]_V(f)}:f\in l(V)\setminus constants\},\forall x,y\in V$.\vspace{0.15cm}

In the following, we always use the notations $C(B),C(A_n),C(A),C(A_n^2),C(A^2)$ for spaces of continuous functions with respect to the metric $d$. In particular, a metric $R$ induces the same topology on $B$ (or $A_n,A$) if it is continuous, noticing that $(B,d)$ (or $(A_n,d),(A,d)$) is compact.

\begin{theorem}\label{thm213}
Assume (C1). Let $R_n\in\mathcal{RM}(A_n)\cap C(A_n^2)$ for $n\geq 1$ and $R\in \mathcal{M}(A)$. Assume that $R_n\rightarrowtail R$, so we also have $R\in \mathcal{RM}(A)\cap C(A^2)$. Let $(\mcE_n,\mcF_n)$ be the resistance form associated with $R_n$ for $n\geq 1$, and let $(\mcE,\mcF)$ be the resistance form associated with $R$. Then, we have $(\mcE_n,\mcF_n)$ $\Gamma$-converges to $(\mcE,\mcF)$ on $C(B)$. 
\end{theorem}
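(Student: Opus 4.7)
The plan is to reduce both halves of the $\Gamma$-convergence to the finite-dimensional form convergence supplied by Lemma \ref{lemma28}, using the standard fact from resistance form theory (see \cite{ki3}) that for any resistance form $(\mcE,\mcF)$ on $X$ and any increasing sequence $V_M$ of finite subsets with dense union, $\mcE(f)=\lim_{M\to\infty}[\mcE]_{V_M}(f|_{V_M})\in[0,\infty]$, with finiteness equivalent to $f\in\mcF$. I fix a countable dense sequence $\{x_m\}_{m\geq 1}$ in $A$, choose $x_{m,n}\in A_n$ with $x_{m,n}\to x_m$ as $n\to\infty$ (possible since $\delta(A_n,A)\to 0$), and set $V_M=\{x_1,\ldots,x_M\}$, $V_M^{(n)}=\{x_{1,n},\ldots,x_{M,n}\}$. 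Because $R_n\rightarrowtail R$, the entries $R_n(x_{i,n},x_{j,n})\to R(x_i,x_j)$ for every $i,j$, so by Lemma \ref{lemma28} the traces satisfy $[\mcE_n]_{V_M^{(n)}}\to [\mcE]_{V_M}$ as resistance forms on $V_M$ under the natural identification.

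\textbf{Liminf inequality (a').} Given $f_n\rightarrowtail f$, monotonicity of the trace gives $[\mcE_n]_{V_M^{(n)}}(f_n|_{V_M^{(n)}})\leq \mcE_n(f_n)$. Since $f_n(x_{m,n})\to f(x_m)$ and the trace forms converge, the left-hand side tends to $[\mcE]_{V_M}(f|_{V_M})$ as $n\to\infty$, so $[\mcE]_{V_M}(f|_{V_M})\leq \liminf_n\mcE_n(f_n)$ for every $M$. Letting $M\to\infty$ delivers $\mcE(f)\leq \liminf_n\mcE_n(f_n)$, both when the limit is finite (i.e.\ $f\in\mcF$) and when it is $+\infty$.

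\textbf{Recovery sequence (b').} For $f\in C(A)\setminus\mcF$, any $f_n\rightarrowtail f$ obtained by restricting a Tietze extension of $f$ to $A_n$ (Proposition \ref{prop23}) gives $\mcE_n(f_n)\to\infty=\mcE(f)$ by (a'). For $f\in\mcF$, let $h_M$ be the $\mcE$-harmonic extension of $f|_{V_M}$ and $h_M^{(n)}$ the $\mcE_n$-harmonic extension of the pulled-back data $x_{m,n}\mapsto f(x_m)$ on $V_M^{(n)}$, so that $\mcE_n(h_M^{(n)})=[\mcE_n]_{V_M^{(n)}}(f|_{V_M})\to [\mcE]_{V_M}(f|_{V_M})=\mcE(h_M)$ as $n\to\infty$. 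The maximum principle gives $\|h_M^{(n)}\|_{C(A_n)}\leq \|f\|_{C(A)}$, while the pointwise inequality $|h_M^{(n)}(x)-h_M^{(n)}(y)|^2\leq R_n(x,y)\mcE_n(h_M^{(n)})$, combined with equicontinuity of $\{R_n\}$ on $B\times B$ inherited from $R_n\rightarrowtail R\in C(A^2)$ via Lemma \ref{lemma22}(b), makes the doubly indexed family $\{h_M^{(n)}\}_{M,n}$ uniformly bounded and equicontinuous.

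\textbf{Diagonal extraction.} Orthogonality of the harmonic extension yields $\mcE(f-h_M)=\mcE(f)-\mcE(h_M)\to 0$, and since $(f-h_M)(x_1)=0$, the resistance estimate $|(f-h_M)(x)|^2\leq R(x,x_1)\mcE(f-h_M)$ forces $h_M\to f$ uniformly on $A$. I then pick $M(n)\uparrow\infty$ slowly enough that $\mcE_n(h_{M(n)}^{(n)})\to\mcE(f)$, and set $f_n:=h_{M(n)}^{(n)}$. The claim $f_n\rightarrowtail f$ then follows from Lemma \ref{lemma22}(a) applied to the uniformly equicontinuous family $\{f_n\}$: any subsequential $\rightarrowtail$-limit $g$ must agree with $f$ on every $x_i$ (because $f_n(x_{i,n})=f(x_i)$ holds eventually once $M(n)\geq i$), hence equals $f$ on the dense sequence $\{x_m\}$, hence everywhere. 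The main obstacle is precisely this diagonal coordination of the two separate convergences $h_M^{(n)}\rightarrowtail h_M$ (fixed $M$, $n\to\infty$) and $h_M\to f$ (uniform in $M\to\infty$) into a single $\rightarrowtail$-convergent sequence living on the moving spaces $A_n$, which is why the uniform equicontinuity of the full doubly indexed family — rather than only for each fixed $M$ — is the critical estimate.
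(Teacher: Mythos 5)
Your argument is correct and essentially the same as the paper's: both halves reduce to trace convergence on finite nets via Lemma \ref{lemma28} together with $\mcE(f)=\lim_{M}[\mcE]_{V_M}(f)$, and the recovery sequence is built from harmonic extensions along a slowly growing diagonal $M(n)$, with $f_n\rightarrowtail f$ extracted from uniform boundedness and equicontinuity exactly as in the paper's proof. The only blemish is your claim that the \emph{full} doubly indexed family $\{h_M^{(n)}\}_{M,n}$ is uniformly equicontinuous: for fixed $n$ and large $M$ the points $x_{1,n},\dots,x_{M,n}$ need not be well separated relative to the data $f(x_1),\dots,f(x_M)$, so $\sup_{M,n}\mcE_n\big(h_M^{(n)}\big)$ can be infinite (and $h_M^{(n)}$ may not even exist if two approximants coincide while the prescribed values differ); this is harmless, since equicontinuity is only needed along the diagonal, where it follows from $\mcE_n\big(h_{M(n)}^{(n)}\big)\to\mcE(f)<\infty$.
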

\begin{proof}
$R\in \mathcal{RM}(A)\cap C(A^2)$ by Lemma \ref{lemma22} (b) and a same proof of Theorem \ref{thm29}. It suffices to prove (a'),(b') by Remark 3 below Definition \ref{def211}. 

(a'). $R$ induces the same topology as $d$ on $A$, so $(A,R)$ is separable. Let $V_1\subset V_2\subset V_3\subset\cdots$ be a nested sequence of finite subsets of $A$, and assume that $V_*=\bigcup_{m\geq 1}V_m$ is dense in $A$. Then, by the Theorem 2.3.7 in \cite{ki3}, we have 
\[\mcE(f)=\sup_{m\geq 1}[\mcE]_{V_m}(f)=\lim\limits_{m\to\infty} [\mcE]_{V_m}(f),\quad\forall f\in C(A).\]

For each $n\geq 1$, we can find a nested sequence of finite set $V_{1,n}\subset V_{2,n}\subset \cdots$ such that $\delta(V_m,V_{m,n})\to 0$ as $n\to\infty$. In addition, for each $m\geq1$, we require that $\#V_m=\#V_{m,n}$ for $n$ large enough. To achieve this, we choose a converging sequence $x_n\to x$ with $x_n\in A_n$ for each $x\in V_*$. Then, by applying Lemma \ref{lemma28} (b) and the remark below Definition \ref{def212}, it is easy to see that 
\[[\mcE]_{V_m}(f)=\lim\limits_{n\to\infty}[\mcE_n]_{V_{m,n}}(f_n),\]
where $f_n,f$ are the functions in the statement of (a'). This implies \[\mcE(f)=\lim\limits_{m\to\infty} [\mcE]_{V_m}(f)\leq \liminf\limits_{n\to\infty}\sup\limits_{m\geq 1} [\mcE_n]_{V_{m,n}}(f_n)=\liminf\limits_{n\to\infty}\mcE_n(f_n).\]

(b'). It suffices to consider $f\in \mcF$. Let $V_m$ and $V_{m,n}$ be the same nest sequences in the proof of (a'), and let $f'_n\in C(A_n),n\geq 1$ so that $f_n'\rightarrowtail f$. There exists a sequence $\{m_n\}_{n\geq 1}$ with $m_n\to\infty$ that grows slowly enough so that 
\[
\mcE(f)=\lim\limits_{m\to\infty}[\mcE]_{V_m}(f)=\lim\limits_{n\to\infty}[\mcE_n]_{V_{m_n,n}}(f_n')=\lim\limits_{n\to\infty} \mcE_{n}(f_n), 
\]
where $f_n$ is the harmonic extension of $f_n'|_{V_{m_n,n}}$ on $A_n$ with respect to $(\mcE_n,\mcF_n)$ for each $n\geq 1$. Since $\{R_n\}_{n\geq1}$ is uniformly bounded and equicontinuous and $\{\mcE_n(f_n)\}_{n\geq 1}$ is uniformly bounded, we can see that $\{f_n\}_{n\geq 1}$ is also uniformly bounded and equicontinuous. By applying Lemma \ref{lemma22} and by a routine argument (any subsequence contains a further subsequence that converges to a same function), we have $f_n\rightarrowtail f$. 
\end{proof}

\noindent\textbf{Remark.} In fact, in Theorem \ref{thm213}, the statement (b') can be strengthened as follows:

(b'') Let $f\in C(A)$ and let $V=\{x_m\}_{m=1}^M$ be a finite subset of $A$. For $1\leq m\leq M$, let $x_{m,n}\in A_n,n\geq 1$ such that $x_{m,n}\to x_m$. Then, there exists a sequence $\{f_n\}_{n\geq 1}$ such that $f_n\in C(A_n),\forall n\geq 1$, $f_n\rightarrowtail f$,
\[\begin{cases}
\mcE(f)=\lim_{n\to\infty} \mcE_n(f_n),\\
f_n(x_{m,n})=f(x_m),\forall n\geq 1,1\leq m\leq M.
\end{cases}\] 
The claim can be easily seen from the proof of (b'). \vspace{0.15cm}

The regularity of the form is immediate (see \cite{ki4}) as we always have $(A,R)$ being a compact space in the discussions. Next, we show that $\Gamma$-convergence on $C(B)$ preserves the local property. 

\noindent \textbf{The local property}: a resistance form $(\mcE,\mcF)$ on $X$ (or more generally a Dirichlet form $(\mcE,\mcF)$ on $L^2(X,\mu)$) is local if $\mcE(f,g)=0$ for any $f,g\in \mcF$ such that $f,g$ have disjoint supports. 
 
\begin{theorem}\label{thm214}
Assume (C1). For each $n\geq 1$, let $(\mcE_n,\mcF_n)$ be a resistance form on $A_n$ with $\mcF_n\subset C(A_n)$; let $(\mcE,\mcF)$ be the resistance forms on $A$ with $\mcF\subset C(A)$. Assume $(\mcE_n,\mcF_n)$ $\Gamma$-converges to $(\mcE,\mcF)$ on $C(B)$, and $(\mcE_n,\mcF_n)$ is local for each $n\geq 1$, then $(\mcE,\mcF)$ is local.
\end{theorem}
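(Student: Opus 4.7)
\emph{The plan} is to combine polarization with a truncation that transplants the disjointness of supports from $A$ to $A_n$. Fix $f, g \in \mcF$ with disjoint supports $K_f, K_g \subset A$; since $\mcE(f+g) = \mcE(f) + 2\mcE(f,g) + \mcE(g)$, it suffices to prove $\mcE(f+g) \leq \mcE(f) + \mcE(g)$ (and the same inequality with $g$ replaced by $-g$). In the compact metric space $B$, choose open neighborhoods $U \supset K_f$ and $V \supset K_g$ with $\overline{U} \cap \overline{V} = \emptyset$. Using Tietze's theorem together with a continuous cutoff supported in $U$, construct $\tilde{f} \in C(B)$ with $\tilde{f}|_A = f$ and $\tilde{f} \equiv 0$ on $B \setminus U$, and similarly $\tilde{g} \in C(B)$. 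Condition (b) of $\Gamma$-convergence then produces recovery sequences $\{f_n\}, \{g_n\} \subset C(B)$ with $f_n \rightrightarrows \tilde{f}$, $g_n \rightrightarrows \tilde{g}$, $\mcE_n(f_n|_{A_n}) \to \mcE(f)$, and $\mcE_n(g_n|_{A_n}) \to \mcE(g)$.

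\emph{To enforce} disjoint supports at the finite stage, I compose with the normal contraction $T_\epsilon(t) = \mathrm{sign}(t)(|t|-\epsilon)_+$. Set $\epsilon_n := \max\{\|f_n - \tilde{f}\|_{C(B)}, \|g_n - \tilde{g}\|_{C(B)}\}$, so $\epsilon_n \to 0$, and define $\hat{f}_n = T_{\epsilon_n} \circ f_n$ and $\hat{g}_n = T_{\epsilon_n} \circ g_n$, both in $C(B)$. Since $\tilde{f} \equiv 0$ on $B \setminus U$, we have $|f_n| \leq \epsilon_n$ there, and hence $\hat{f}_n \equiv 0$ on $B \setminus U$; similarly $\hat{g}_n \equiv 0$ on $B \setminus V$. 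Because $\overline{U} \cap \overline{V} = \emptyset$, the restrictions $\hat{f}_n|_{A_n}$ and $\hat{g}_n|_{A_n}$ therefore have disjoint supports in $A_n$. The Markov property of resistance forms (the extension of (RF5) to arbitrary normal contractions) yields $\mcE_n(\hat{f}_n|_{A_n}) \leq \mcE_n(f_n|_{A_n})$, and, combined with the $\Gamma$-liminf inequality applied to $\hat{f}_n \rightrightarrows \tilde{f}$, this forces $\mcE_n(\hat{f}_n|_{A_n}) \to \mcE(f)$; likewise $\mcE_n(\hat{g}_n|_{A_n}) \to \mcE(g)$.

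\emph{Finally}, locality of $(\mcE_n, \mcF_n)$ gives $\mcE_n(\hat{f}_n|_{A_n} + \hat{g}_n|_{A_n}) = \mcE_n(\hat{f}_n|_{A_n}) + \mcE_n(\hat{g}_n|_{A_n})$, so from $\hat{f}_n + \hat{g}_n \rightrightarrows \tilde{f} + \tilde{g}$ and the $\Gamma$-liminf inequality I obtain $\mcE(f+g) \leq \mcE(f) + \mcE(g)$; running the argument again with $-g$ in place of $g$ closes the proof. \emph{The main obstacle} I foresee is justifying the Markov estimate $\mcE_n(\hat{f}_n|_{A_n}) \leq \mcE_n(f_n|_{A_n})$ from the axioms: property (RF5) in Definition \ref{def210} only supplies the unit-interval truncation, so one must extend it to arbitrary normal contractions by an approximation-and-scaling argument (standard for resistance forms, cf.\ \cite{ki3}), or cite this strengthening directly.
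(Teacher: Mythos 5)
Your proof is correct and follows essentially the same strategy as the paper's: take recovery sequences for the two disjointly supported functions, apply an $\varepsilon_n$-truncation (a normal contraction) to restore disjointness of supports at level $n$, invoke locality of $\mcE_n$ to split the energy of the sum, and pass to the limit with the $\Gamma$-liminf inequality. The only differences are organizational — you treat general $f,g$ with disjoint supports directly and get both signs of $\mcE(f,g)$ by replacing $g$ with $-g$, whereas the paper works with $f_+=f\vee 0$, $f_-=f\wedge 0$ and recovers the general case by linear combination — and both arguments rely on the same standard extension of (RF5) to arbitrary normal contractions (via the trace representation $\mcE(u)=\sup_m[\mcE]_{V_m}(u|_{V_m})$), which you correctly flag.
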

\begin{proof}
	Let $f\in C(B)$ such that $f|_A\in \mcF$, we will show that $\mcE(f_+,f_-)=0$, where $f_+=f\vee 0$ and $f_-=f\wedge 0$. Then, by a linear combination, it follows immediately that $\mcE(g,h)=0$ for any $g,h\in \mcF$ with $gh=0$. 
	
	First, $\mcE(f)\geq \mcE(|f|)$ due to the Markov property, so $\mcE(f_+,f_-)\geq 0$. 
	
	Next, we prove $\mcE(f_+,f_-)\leq 0$. By Definition \ref{def211} (b), we can choose $f'_{+,n}\in C(B)$ for each $n\geq 1$ so that 
	\[f'_{+,n}\rightrightarrows f_+\text{ and }\mcE(f_+)=\lim\limits_{n\to\infty}\mcE_n(f'_{+,n}).\]
	Also, define $f'_{-,n}$ in a same way. Without loss of generality, we can assume that $f'_{+,n}|_{A_n}\in \mcF_n,f'_{-,n}|_{A_n}\in \mcF_n$ for all $n\geq 1$. There is a decaying sequence of positive numbers $\{\varepsilon_n\}_{n\geq 1}, \varepsilon_n\to 0$ such that  $f_{+,n}=(f'_{+,n}-\varepsilon_n)\vee 0$ and $f_{-,n}=(f'_{-,n}+\varepsilon_n)\wedge 0$ have disjoint supports for each $n\geq 1$. Clearly, $f_{+,n}\rightrightarrows f_+$ and $f_{-,n}\rightrightarrows f_-$. Then, 
	\[
	\begin{aligned}
	\mcE(f)\leq \liminf_{n\to\infty}\mcE_n(f_{+,n}+f_{-,n})&=\liminf_{n\to\infty}\big(\mcE_n(f_{+,n})+\mcE_n(f_{-,n})\big)\\
	&\leq \lim_{n\to\infty}\big(\mcE_n(f'_{+,n})+\mcE_n(f'_{-,n})\big)=\mcE(f_+)+\mcE(f_-),
	\end{aligned}
	\] 
	where we use  Definition \ref{def211} (a) in the first inequality, the local property of $(\mcE_n,\mcF_n)$ in the first equality and the Markov property in the second inequality. This implies that $\mcE(f_+,f_-)\leq 0$. 
\end{proof}

In fact, it suffices to assume the form $(\mcE,\mcF)$ is regular on $L^2(A,\mu)$. See Section 7 for further discussions.

\subsection{Convergence of resolvent kernels} Lastly, we have a short proof showing that $R_n\rightarrowtail R$ implies $u^{(\alpha)}_n\rightarrowtail u^{(\alpha)}$, where $u^{(\alpha)}_n$ and $u^{(\alpha)}$ are the resolvent kernels associated with $R_n,\mu_n$ and $R,\mu$ defined as follows.

\begin{definition}\label{def215}
Let $\alpha>0$, let $R$ be a bounded resistance metric on $X$, let $\nu$ be a probability measure on $(X,R)$, and assume $(X,R)$ is separable. Then for any $x$, there exists a unique function $u^{(\alpha)}(x,\bullet)\in \mcF$ such that 
\[\mcE^{(\alpha)}\big(u^{(\alpha)}(x,\bullet),f\big)=f(x),\quad\forall f\in \mcF,\]
where $(\mcE,\mcF)$ is the resistance form associated with $R$, and $\mcE^{(\alpha)}(f,g)=\mcE(f,g)+\alpha\int_Xfgd\nu$ for any $f,g\in \mcF$. The function $u^{(\alpha)}$ is the \emph{resolvent kernel} associated with $R$ and $\nu$. 
\end{definition}

It is well-known that $u^{(\alpha)}$ is a symmetric function, i.e. $u^{(\alpha)}(x,y)=u^{(\alpha)}(y,x)$. Following a routine argument, we have the following estimate.

\begin{lemma}\label{lemma216}
	Let $\alpha>0$, let $R$ be a bounded resistance metric on $X$, let $\nu$ be a probability measure on $X$, and assume that $(X,R)$ is separable. Then there exists $c$ depending only on $\alpha$ and $\diam_R(X)=\sup_{x,y\in X}R(x,y)$ such that 
	\[\begin{cases}
		\|u^{(\alpha)}\|_{C(X^2)}\leq c,\\
		|u^{(\alpha)}(x_1,y_1)-u^{(\alpha)}(x_2,y_2)|\leq c\big(R(x_1,x_2)^{1/2}+R(y_1,y_2)^{1/2}\big),\quad\forall x_i,y_i\in X,i=1,2.
	\end{cases}\]
\end{lemma}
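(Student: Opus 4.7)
The plan is to couple the defining variational equation for $u^{(\alpha)}$ with the basic inequality $|f(x)-f(y)|^2\leq R(x,y)\,\mcE(f)$ coming from (RF4).

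First I would establish a uniform Sobolev-type embedding $\mcF\hookrightarrow C(X)$ with constant depending only on $\alpha$ and $\diam_R(X)$. For $f\in\mcF$ and $x,y\in X$, (RF4) gives $|f(x)|^2\leq 2|f(y)|^2+2\diam_R(X)\,\mcE(f)$; integrating in $y$ against the probability measure $\nu$ and then bounding $\int f^2\,d\nu$ by $\mcE^{(\alpha)}(f)/\alpha$ and $\mcE(f)$ by $\mcE^{(\alpha)}(f)$, one obtains
\[\|f\|_{C(X)}^2\leq c_0\,\mcE^{(\alpha)}(f),\]
with $c_0=2(\alpha^{-1}+\diam_R(X))$.

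Next I would plug $f=u^{(\alpha)}(x,\cdot)$ into the defining equation, which yields $u^{(\alpha)}(x,x)=\mcE^{(\alpha)}(u^{(\alpha)}(x,\cdot))$. Applying the embedding to $u^{(\alpha)}(x,\cdot)$ at the point $x$ gives $|u^{(\alpha)}(x,x)|^2\leq c_0\,u^{(\alpha)}(x,x)$, so $u^{(\alpha)}(x,x)\leq c_0$ and hence $\mcE^{(\alpha)}(u^{(\alpha)}(x,\cdot))\leq c_0$ uniformly in $x$. Using the symmetry $u^{(\alpha)}(x,y)=u^{(\alpha)}(y,x)$ and applying the embedding to $u^{(\alpha)}(y,\cdot)$ at the point $x$, I get $|u^{(\alpha)}(x,y)|^2\leq c_0\,u^{(\alpha)}(y,y)\leq c_0^2$, which is the desired sup-norm bound.

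For the H\"older estimate I would invoke (RF4) one more time: for any fixed $y\in X$,
\[|u^{(\alpha)}(y,x_1)-u^{(\alpha)}(y,x_2)|^2\leq R(x_1,x_2)\,\mcE(u^{(\alpha)}(y,\cdot))\leq c_0\,R(x_1,x_2),\]
and then symmetry together with the triangle inequality
\[|u^{(\alpha)}(x_1,y_1)-u^{(\alpha)}(x_2,y_2)|\leq |u^{(\alpha)}(x_1,y_1)-u^{(\alpha)}(x_2,y_1)|+|u^{(\alpha)}(x_2,y_1)-u^{(\alpha)}(x_2,y_2)|\]
produces the claimed bound with $c=\sqrt{c_0}$. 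The main obstacle is really just careful bookkeeping of constants; the only nontrivial inputs are that $u^{(\alpha)}(x,\cdot)\in\mcF$ (built into Definition \ref{def215}) and the symmetry of $u^{(\alpha)}$, which follows from symmetry of $\mcE^{(\alpha)}$ and the uniqueness in Definition \ref{def215}.
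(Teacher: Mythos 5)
Your proof is correct and follows essentially the same route as the paper: both arguments hinge on the identity $u^{(\alpha)}(x,x)=\mcE^{(\alpha)}\big(u^{(\alpha)}(x,\cdot)\big)$, the oscillation bound $|f(x)-f(y)|^2\leq R(x,y)\mcE(f)$, and the fact that $\nu$ is a probability measure. The only cosmetic difference is how the measure enters: you package it as the uniform embedding $\|f\|_{C(X)}^2\leq 2(\alpha^{-1}+\diam_R(X))\,\mcE^{(\alpha)}(f)$, while the paper tests the defining equation against the constant function to get $\int_X u^{(\alpha)}(x,y)\,d\nu(y)=\alpha^{-1}$ and solves the resulting quadratic inequality in $u^{(\alpha)}(x,x)^{1/2}$.
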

\begin{proof}
By definition, we have $\mcE\big(u^{(\alpha)}(x,\bullet)\big)\leq \mcE^{(\alpha)}\big(u^{(\alpha)}(x,\bullet)\big)=u^{(\alpha)}(x,x)$, where $(\mcE,\mcF)$ is the resistance form associated with $R$. As a consequence, we have 
\[
|u^{(\alpha)}(x,y_1)-u^{(\alpha)}(x,y_2)|^2\leq R(y_1,y_2) u^{(\alpha)}(x,x).
\]
Combining with the fact that $\int_X u^{(\alpha)}(x,y)d\nu(y)=\alpha^{-1}$, we have \[u^{(\alpha)}(x,x)-\big(u^{(\alpha)}(x,x)\big)^{1/2}\diam^{1/2}_R(X)\leq \alpha^{-1}.\]
This shows that $u^{(\alpha)}(x,x)\leq c'$ for some $c'$ depending only on $\diam_R(X)$ and $\alpha$. 
\end{proof}

\begin{theorem}\label{thm217}
Assume (C1),(C2). Let $R_n\in \mathcal{RM}(A_n)\cap C(A_n^2)$ for each $n\geq 1$, and assume $R_n\rightarrowtail R$ for some $R\in \mathcal{M}(A)$. Then $R\in \mathcal{RM}(A)\cap C(A^2)$. Let $\alpha>0$, then 
\[u^{(\alpha)}_n\rightarrowtail u^{(\alpha)},\]
where $u_n^{(\alpha)}$ is the resolvent kernel associated with $R_n,\mu_n$ for each $n\geq 1$, and $u^{(\alpha)}$ is the resolvent kernel associated with $R,\mu$.
\end{theorem}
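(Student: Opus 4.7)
The plan is to combine an Arzelà--Ascoli type compactness argument on the kernels with the variational characterization of $u^{(\alpha)}$ and the $\Gamma$-convergence of resistance forms from Theorem \ref{thm213}. The first assertion $R\in\mathcal{RM}(A)\cap C(A^2)$ is immediate by the same argument as in the proof of Theorem \ref{thm29}, since Lemma \ref{lemma22}(b) forces $\{R_n\}_{n\ge 1}$ to be uniformly bounded and $d$-equicontinuous.

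Next I extract a convergent subsequence of the kernels. The uniform bound on $\{R_n\}$ gives $\sup_n \mathrm{diam}_{R_n}(A_n)<\infty$, so the constant in Lemma \ref{lemma216} may be chosen independently of $n$; hence $\{u_n^{(\alpha)}\}$ is uniformly bounded and uniformly $\tfrac12$-Hölder with respect to $R_n$. Combined with the $d$-equicontinuity of $\{R_n\}$, this upgrades to $d$-equicontinuity of $\{u_n^{(\alpha)}\}$. Since $A_n\times A_n\to A\times A$ in the Hausdorff metric of $B\times B$, the product version of Lemma \ref{lemma22}(a) yields a subsequence $\{n_k\}$ and $v\in C(A\times A)$ with $u_{n_k}^{(\alpha)}\rightarrowtail v$.

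To identify $v=u^{(\alpha)}$, fix $x\in A$ together with $x_n\in A_n$, $x_n\to x$, and set $g_n:=u_n^{(\alpha)}(x_n,\cdot)$, so that $g_{n_k}\rightarrowtail v(x,\cdot)$. The defining identity $\mathcal{E}_n^{(\alpha)}(g_n,f)=f(x_n)$ for all $f\in\mathcal{F}_n$ is equivalent to saying that $g_n$ is the unique minimizer of the strictly convex functional $J_n(h):=\tfrac12\mathcal{E}_n^{(\alpha)}(h)-h(x_n)$, and likewise $u^{(\alpha)}(x,\cdot)$ is the unique minimizer of $J(h):=\tfrac12\mathcal{E}^{(\alpha)}(h)-h(x)$. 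For an arbitrary test function $h\in\mathcal{F}$, I invoke the strengthened statement (b'') in the remark after Theorem \ref{thm213} with $V=\{x\}$ to obtain $h_{n_k}\in C(A_{n_k})$ with $h_{n_k}\rightarrowtail h$, $\mathcal{E}_{n_k}(h_{n_k})\to\mathcal{E}(h)$, and $h_{n_k}(x_{n_k})=h(x)$. Taking $\liminf$ in $J_{n_k}(g_{n_k})\le J_{n_k}(h_{n_k})$, using the $\Gamma$-liminf inequality for the Dirichlet part, convergence of the $L^2$ terms discussed below, and $g_{n_k}(x_{n_k})\to v(x,x)$, gives $J(v(x,\cdot))\le J(h)$. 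Strict convexity of $J$ then forces $v(x,\cdot)=u^{(\alpha)}(x,\cdot)$, so $v=u^{(\alpha)}$ on $A\times A$. A standard uniqueness/subsequence argument promotes this to convergence of the full sequence $u_n^{(\alpha)}\rightarrowtail u^{(\alpha)}$.

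The principal obstacle is the passage to the limit in the $L^2$ terms: one needs $\int g_{n_k}^2\,d\mu_{n_k}\to\int v(x,\cdot)^2\,d\mu$ and the analogue for $h_{n_k}$, which Lemma \ref{lemma25} alone does not deliver (it only gives $g_{n_k}\mu_{n_k}\Rightarrow v(x,\cdot)\mu$). The resolution is to use Lemma \ref{lemma24} to lift $g_{n_k}$ to extensions $\tilde g_{n_k}\in C(B)$ with $\tilde g_{n_k}\rightrightarrows \tilde g\in C(B)$, whence $\tilde g_{n_k}^2\rightrightarrows \tilde g^2$ uniformly on $B$; combining this with the weak convergence $\mu_n\Rightarrow\mu$ on the compact space $B$ yields the desired convergence of integrals, and the same trick handles $h_{n_k}$.
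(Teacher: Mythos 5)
Your proposal is correct and follows essentially the same route as the paper: uniform bounds from Lemma \ref{lemma216} plus an Arzel\`a--Ascoli extraction, identification of the limit via the variational characterization of $u^{(\alpha)}(x,\cdot)$ together with the $\Gamma$-liminf inequality and a recovery sequence from Theorem \ref{thm213}, and a subsequence argument; the paper merely tests against the recovery sequence of the minimizer $u^{(\alpha)}(x,\cdot)$ itself rather than against all $h\in\mcF$. One small remark: your ``principal obstacle'' is less of an obstacle than you suggest, since $g_{n_k}\rightarrowtail v(x,\cdot)$ implies $g_{n_k}^2\rightarrowtail v(x,\cdot)^2$, so Lemma \ref{lemma25} applied to the squares (with test function $1$) already gives $\int g_{n_k}^2\,d\mu_{n_k}\to\int v(x,\cdot)^2\,d\mu$ --- your lift via Lemma \ref{lemma24} is just an unwinding of the same mechanism.
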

\begin{proof}
	We follow a similar idea as the proof of Theorem 2.4.1 in \cite{Mosco}. Let $(\mcE_n,\mcF_n)$ be the resistance form associated with $R_n$ for $n\geq 1$, and let $(\mcE,\mcF)$ be the resistance form associated with $R$. Then, $u^{(\alpha)}_n(x,\cdot)$ is the minimizer of $\mcE^{(\alpha)}_n(f)-2f(x)$ for $n\geq 1$, $x\in A_n$; $u^{(\alpha)}_n(x,\cdot)$ is the minimizer of $\mcE^{(\alpha)}(f)-2f(x)$, for $x\in A$. 
	
	By Lemma \ref{lemma216}, the sequence $\{u^{(\alpha)}_n\}_{n\geq 1}$ is uniformly bounded and equicontinuous, so there is a subsequence $n_k\to\infty$ and $u'\in C(A^2)$ such that  $u^{(\alpha)}_{n_k}\rightarrowtail u'$. Let $x_n\in A_n,n\geq 1$ and $x_n\to x\in A$. By Theorem \ref{thm213} and Lemma \ref{lemma25}, we have 
	\[\mcE^{(\alpha)}\big(u'(x,\cdot)\big)-2u'(x,x)\leq \liminf\limits_{k\to\infty}\mcE^{(\alpha)}_{n_k}\big(u^{(\alpha)}_{n_k}(x_{n_k},\cdot)\big)-2u^{(\alpha)}_n(x_{n_k},x_{n_k}).\]
	On the other hand, by Theorem \ref{thm213} and Lemma \ref{lemma25}, there is a sequence $f_n\rightarrowtail u^{(\alpha)}(x,\cdot)$ such that 
	\[\mcE^{(\alpha)}\big(u^{(\alpha)}(x,\cdot)\big)=\lim_{n\to\infty}\mcE^{(\alpha)}_n(f_n).\]
	Thus, 
	\[\begin{aligned}
	\mcE^{(\alpha)}\big(u^{(\alpha)}(x,\cdot)\big)-2u^{(\alpha)}(x,x)&=\lim_{n\to\infty}\mcE^{(\alpha)}_n(f_n)-2f_n(x_n)\\
	&\geq \liminf\limits_{k\to\infty}\mcE^{(\alpha)}_{n_k}\big(u^{(\alpha)}_{n_k}(x_{n_k},\cdot)\big)-2u^{(\alpha)}_{n_k}(x_{n_k},x_{n_k})\\
	&\geq \mcE^{(\alpha)}\big(u'(x,\cdot)\big)-2u'(x,x).
	\end{aligned}\]
	As $x\in A$ is arbitrary, this implies that $u'=u^{(\alpha)}$. The argument works for any subsequence, so the theorem follows.  
\end{proof}

Careful readers may naturally expect consequences of Theorem \ref{thm217} like the convergence of spectrum. In fact, in \cite{KS}, the Mosco convergence on converging spaces is defined and several consequences are provided. Theorem \ref{thm217} implies such convergence of the form. In addition, readers can find useful discussions on different convergence concepts of metric measure spaces in  \cite{BBI,GMS}.

\subsection{Related results in stochastic process} 
Our discussions are purely analytic, and our interest is to find a limit resistance metric with Theorem \ref{thm29}, with Theorem \ref{thm213} serving as a tool to provide additional information on the limit form (so we can prove the self-similarity in Section 6.1).

On the other hand, for readers interested in deeper consequences of Theorem \ref{thm29}, there is the paper \cite{C1} by D.A. Croydon in the probability setting considering a question similar to Theorem \ref{thm213}, which shows that under weak assumptions, which will be verified in our setting in the rest of this subsection,
\begin{equation}\label{eqn21}
\mathbb{P}^n_{x_n}(X_t^n\in \cdot)\Rightarrow \mathbb{P}_x(X_t\in \cdot),\qquad\forall x_n\to x,
\end{equation}
with respect to the the usual Skorohod $J_1$-topology on the space of cadlag process, where $(\mathbb{P}^n_x,x\in A_n,X_t^n,t\geq 0)$ and $(\mathbb{P}_x,x\in A,X_t,t\geq 0)$ are the Hunt processes associated with the Dirichlet forms $(\mcE_n,L^2(A_n,\mu_n))$ and $(\mcE,L^2(A,\mu))$ (see \cite{FOT}). Also, see \cite{CHK} for a stronger result about convergence of local times under slightly stronger assumptions.

There are subtle differences between the basic setting of \cite{C1,CHK} and our assumptions in Theorem \ref{thm213}. To apply the main theorem of \cite{C1}, we need to verify the basic geometric assumption (Assumption 1.1 (a) of \cite{C1} or Assumption 1.3 of \cite{CHK}), which states that the pointed metric measure spaces $(A_n,R_n,\mu_n,x_n)$ converge to $(A,R,\mu,x)$ in the Gromov–Hausdorff-vague topology (See section 2.2 of \cite{C1}). We provide a short proof here. 

Under the assumption of Theorem \ref{thm213}, we can embed all the resistance spaces $(A,R)$ and $(A_n,R_n),n\geq 1$ into a new metric space $(\bar{A},\bar{R})$ to be defined as follows. First, we let $\bar{A}$ be the same as in Proposition \ref{prop23} (b) (but we view $\bar{A}$ as the disjoint union of $A$ and $A_n,n\geq 1$ for convenience)
\[\bar{A}=A\sqcup \big(\sqcup_{n\geq 1} A_n\big)\cong (\{0\}\times A)\cup\big(\cup_{n\geq 1}(\{\frac{1}{n}\}\times A_n)\big).\]
Next, by Proposition \ref{prop23} (b), the fact that $R_n\rightarrowtail R$ and $\bar{A}$ is compact (induced topology from $[0,1]\times B$), we can find a continuous increasing function $\psi:[0,\infty)\to [0,\infty)$ so that $\psi(0)=0$ and $|R_n(x,x')-R(y,y')|\leq \psi\big(\frac{1}{n}+d(x,y)\big)+\psi\big(\frac{1}{n}+d(x',y')\big)$, for any $n\geq 1$, $x,x'\in A_n$ and $y,y'\in A$. We define the metric $\bar{R}$ on $\bar{A}$ as follows,
\[\bar{R}(x,y)=\begin{cases}
	R_n(x,y),\qquad &n\geq 1, x,y\in A_n,\\
	\inf\limits_{x'\in A_n,y'\in A}\Big(R_n(x,x')+R(y,y')+\psi\big(\frac{1}{n}+d(x',y')\big)\Big),&n\geq 1, x\in A_n, y\in A,\\
	\inf\limits_{z\in A}\big(\bar{R}(x,z)+\bar{R}(z,y)\big),&n\neq m, x\in A_n,y\in A_m.
\end{cases}\] 
It is not hard to show that $(\bar{A},\bar{R})$ is a metric space. In addition, let $\iota_n:A_n\to \bar{A},n\geq 1$ and $\iota:A\to \bar{A}$ be the natural embeddings, one can easily prove that $\lim_{n\to\infty}\delta\big(\iota(A_n),\iota(A)\big)=0$ where $\delta$ is the Hausdorff metric on subsets of $(\bar{A},\bar{R})$, and $\mu_n\circ \iota_n^{-1}\Rightarrow \mu\circ \iota^{-1}$. For short, we omit the details here. This immediately implies that  $(A_n,R_n,\mu_n,x_n)$ converges to $(A,R,\mu,x)$ in the Gromov–Hausdorff-vague topology. 

In particular, by Theorem 1.2 of \cite{C1} and our particular embedding, one can see (\ref{eqn21}) holds with all the process embedded in the original topological space $(B,d)$.

\section{A review of $\AG_\lambda$}
In this section, we introduce some observations and notations about $\AG_\lambda$. At the end, we will introduce the main results we will prove in Section 4-6. 

\begin{definition}\label{def31}
	(a). Let $W_0=\emptyset$ and $W_m=\{1,2,3,4\}^m$ for $m\geq 1$. Write $W_*=\bigcup_{m\geq 0}W_m$. 
	
	(b). Write $F_{w,\lambda}=F_{w_1,\lambda}F_{w_2,\lambda}\cdots F_{w_m,\lambda}$ for $w=w_1w_2\cdots w_m\in W_m$, $m\geq 1$. Also, define $F_{\emptyset,\lambda}=id$, the identity map.
\end{definition}

We use the notation $\blacktriangle$ for the triangle with vertices $p_1,p_2,p_3$. Also, $\partial\blacktriangle$ is the topological boundary of $\blacktriangle$ in $\mathbb{R}^2$, and $\partial_\downarrow \blacktriangle$ is the bottom line of the triangle, i.e. $\partial_\downarrow\blacktriangle=\{(t,0):0\leq t\leq 1\}$. 

\begin{proposition}\label{prop32}
	Let $w\in W_*$ and $\lambda_1,\lambda_2\in (0,\frac{1}{2})$. Then for any closed set $A\subset \blacktriangle$, we have 
	\[\delta(F_{w,\lambda_1}A,F_{w,\lambda_2}A)\leq 2|\lambda_1-\lambda_2|.\]
	In particular, for $i=1,2,3$, we have $d\big(F_{w,\lambda_1}(p_i),F_{w,\lambda_2}(p_i)\big)\leq 2|\lambda_1-\lambda_2|$. 
\end{proposition}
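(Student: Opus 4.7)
The plan is to reduce the Hausdorff bound to a pointwise estimate and then induct on $|w|$. Since for any maps $f,g$ and any $A\subset\blacktriangle$ one has $\delta(f(A),g(A))\leq\sup_{x\in A}|f(x)-g(x)|$ (pair $f(x)$ with $g(x)\in g(A)$), it suffices to prove
\[\sup_{x\in\blacktriangle}|F_{w,\lambda_1}(x)-F_{w,\lambda_2}(x)|\leq 2|\lambda_1-\lambda_2|\]
for every $w\in W_*$. The ``in particular'' statement then follows by specializing to $A=\{p_i\}$, and the base case $w=\emptyset$ is trivial.

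Before inducting I would establish three preliminary facts by direct computation from the formulas defining $F_{i,\lambda}$. First, $F_{i,\lambda}(\blacktriangle)\subset\blacktriangle$ for each $i$ and each $\lambda\in(0,\tfrac12)$; for $i\in\{1,2,3\}$ this is immediate, and for $i=4$ one checks that each of $F_{4,\lambda}(p_1),F_{4,\lambda}(p_2),F_{4,\lambda}(p_3)$ lies in $\blacktriangle$. Iterating gives $F_{w,\lambda}(\blacktriangle)\subset\blacktriangle$ for every $w$. Second, the contraction ratio $r_{4,\lambda}$ of $F_{4,\lambda}$ satisfies $r_{4,\lambda}\leq\tfrac12$: computing $|F_{4,\lambda}(p_2)-F_{4,\lambda}(p_3)|$ from the definition yields $r_{4,\lambda}^2=3\lambda^2-\tfrac{3\lambda}{2}+\tfrac14\leq\tfrac14$ on $(0,\tfrac12)$. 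Third, and most crucially,
\[|F_{4,\lambda_1}(y)-F_{4,\lambda_2}(y)|\leq |\lambda_1-\lambda_2|\qquad\text{for every }y\in\blacktriangle.\]
Writing $F_{4,\lambda}(y)=A_\lambda y+b_\lambda$ as an affine map (with $A_\lambda$ the unique similarity matching the prescribed vertex images and $b_\lambda=F_{4,\lambda}(p_2)$) and differentiating in $\lambda$ gives
\[\partial_\lambda F_{4,\lambda}(y)=\bigl(\sqrt{3}\,y_2-\tfrac12,\;\tfrac{\sqrt{3}}{2}-\sqrt{3}\,y_1\bigr),\]
a vector independent of $\lambda$ whose squared norm $(\sqrt{3}y_2-\tfrac12)^2+3(\tfrac12-y_1)^2$ is a convex quadratic in $y$ equal to $1$ at each of $p_1,p_2,p_3$. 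Convexity then forces this norm to be $\leq 1$ throughout $\blacktriangle$, and integrating in $\lambda$ yields the stated bound.

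For the inductive step I would write $w=iw'$ with $w'\in W_{m-1}$. If $i\in\{1,2,3\}$, then $F_{i,\lambda}=F_i$ is $\lambda$-independent and a $\tfrac12$-contraction, so the inductive hypothesis gives
\[|F_{w,\lambda_1}(x)-F_{w,\lambda_2}(x)|\leq\tfrac12|F_{w',\lambda_1}(x)-F_{w',\lambda_2}(x)|\leq|\lambda_1-\lambda_2|.\]
If $i=4$, setting $y:=F_{w',\lambda_2}(x)\in\blacktriangle$ (by the first preliminary fact), the triangle inequality together with the second and third preliminary facts and the inductive hypothesis yields
\[\begin{aligned}|F_{w,\lambda_1}(x)-F_{w,\lambda_2}(x)|&\leq r_{4,\lambda_1}|F_{w',\lambda_1}(x)-F_{w',\lambda_2}(x)|+|F_{4,\lambda_1}(y)-F_{4,\lambda_2}(y)|\\&\leq\tfrac12\cdot 2|\lambda_1-\lambda_2|+|\lambda_1-\lambda_2|=2|\lambda_1-\lambda_2|,\end{aligned}\]
closing the induction. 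The main obstacle is the third preliminary fact; the splitting in the $i=4$ case is tight (using $r_{4,\lambda_1}\leq\tfrac12$ and the sharp constant $1$ in (iii) to exactly produce $2$), so the constant $2$ in the statement is the correct one for this approach.
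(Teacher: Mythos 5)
Your proof is correct and follows essentially the same route as the paper: the key estimate $\sup_{y\in\blacktriangle}|F_{4,\lambda_1}(y)-F_{4,\lambda_2}(y)|\leq|\lambda_1-\lambda_2|$ combined with the $\tfrac12$-contraction bound, then peeling off one letter of $w$ at a time so the errors sum geometrically to $2|\lambda_1-\lambda_2|$ (the paper unrolls the recursion as a geometric series where you run an induction with the fixed bound $2$, which is the same computation). The only substantive addition is that you verify the key estimate by differentiating in $\lambda$ and using convexity, whereas the paper dismisses it as ``an easy observation''; that verification is correct.
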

\begin{proof}
	By an easy observation, we have 
	\[|\lambda_1-\lambda_2|=\sup\{x\in \blacktriangle:d\big(F_{4,\lambda_1}(x),F_{4,\lambda_2}(x)\big)\}.\]
	Then, for any compact set $S_1,S_2\subset \blacktriangle$, we can easily see that for $1\leq i\leq4$,
	\[
	\begin{aligned}
	\delta(F_{i,\lambda_1}(S_1),F_{i,\lambda_2}(S_2))&\leq  \delta(F_{i,\lambda_1}(S_1),F_{i,\lambda_1}(S_2))+ \delta(F_{i,\lambda_1}(S_2),F_{i,\lambda_2}(S_2))\\
	&\leq \frac{1}{2}\delta(S_1,S_2)+|\lambda_1-\lambda_2|,
	\end{aligned}
	\]
	As a consequence, for $w\in W_m$, we have 
	\[
	\begin{aligned}
	\delta(F_{w,\lambda_1}A, F_{w,\lambda_2}A)&\leq \frac{1}{2}\delta(F_{w_2\cdots w_m,\lambda_1}A, F_{w_2\cdots w_m,\lambda_2}A)+|\lambda_1-\lambda_2|\\
	&\leq \frac{1}{4}\delta(F_{w_3\cdots w_m,\lambda_1}A, F_{w_3\cdots w_m,\lambda_2}A)+\frac{1}{2}|\lambda_1-\lambda_2|+|\lambda_1-\lambda_2|\\&\leq\cdots\leq  2|\lambda_1-\lambda_2|.
	\end{aligned}
	\]
	The second claim follows from  $d\big(F_{w,\lambda_1}(p_i),F_{w,\lambda_2}(p_i)\big)=\delta\big(\{F_{w,\lambda_1}(p_i)\},\{F_{w,\lambda_2}(p_i)\}\big)$.
\end{proof}

An easy observation from Proposition \ref{prop32} is that $\AG_\lambda$ depends on $\lambda$ in a continuous manner.

\begin{corollary}\label{coro33}
	We have $\delta(\AG_{\lambda_1},\AG_{\lambda_2})\leq 2|\lambda_1-\lambda_2|$ for $\lambda_1,\lambda_2\in (0,\frac{1}{2})$. 
\end{corollary}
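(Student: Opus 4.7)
The plan is to approximate each attractor $\AG_{\lambda_i}$ at level $m$ by the union of $m$-th level cells over the full triangle, $S_m^{(i)} := \bigcup_{w \in W_m} F_{w,\lambda_i}\blacktriangle$, compare $S_m^{(1)}$ with $S_m^{(2)}$ using Proposition \ref{prop32} applied to the fixed set $\blacktriangle$, and then send $m \to \infty$. The subtlety I want to highlight is that Proposition \ref{prop32} controls only $F_{w,\lambda_1}A$ versus $F_{w,\lambda_2}A$ for a fixed compact set $A$, so I cannot directly feed in $A = \AG_{\lambda_i}$ (which itself varies with $\lambda_i$). Using the invariant superset $\blacktriangle$ circumvents this, at the cost of an error controlled by the cell diameters.

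The first step is to check that $\delta(\AG_{\lambda_i}, S_m^{(i)}) \to 0$ as $m \to \infty$, uniformly in $\lambda_i$. This follows from $\AG_{\lambda_i} \subset \blacktriangle$, from the self-similarity identity $\AG_{\lambda_i} = \bigcup_{w \in W_m} F_{w,\lambda_i}\AG_{\lambda_i}$, and from the fact that each $F_{j,\lambda}$ is a similarity with Lipschitz constant at most $\tfrac{1}{2}$ (immediate for $j=1,2,3$, and verified from the explicit vertex images for $j=4$). Hence every level-$m$ cell has diameter at most $2^{-m}$, each cell contains points of $\AG_{\lambda_i}$, and therefore $\delta(\AG_{\lambda_i}, S_m^{(i)}) \leq 2^{-m}$.

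The second step is to bound $\delta(S_m^{(1)}, S_m^{(2)})$. I would use the elementary inequality $\delta(\bigcup_w A_w, \bigcup_w B_w) \leq \max_w \delta(A_w, B_w)$ for finite families of compact sets and apply Proposition \ref{prop32} with $A = \blacktriangle$ to each word $w \in W_m$, yielding $\delta(S_m^{(1)}, S_m^{(2)}) \leq 2|\lambda_1 - \lambda_2|$. The triangle inequality for the Hausdorff metric then gives
$$\delta(\AG_{\lambda_1}, \AG_{\lambda_2}) \leq \delta(\AG_{\lambda_1}, S_m^{(1)}) + \delta(S_m^{(1)}, S_m^{(2)}) + \delta(S_m^{(2)}, \AG_{\lambda_2}) \leq 2 \cdot 2^{-m} + 2|\lambda_1 - \lambda_2|,$$
and letting $m \to \infty$ finishes the proof. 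There is no real obstacle here; this is a short self-similarity argument and the only care needed is to keep the set $A$ in Proposition \ref{prop32} independent of $\lambda$ so that the uniform level-$m$ cell diameter bound can close the triangle inequality.
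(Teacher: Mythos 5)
Your proof is correct and is essentially the paper's argument: the paper also writes $\delta(\AG_{\lambda_1},\AG_{\lambda_2})$ as the limit of $\delta\big(\bigcup_{w\in W_m}F_{w,\lambda_1}\blacktriangle,\ \bigcup_{w\in W_m}F_{w,\lambda_2}\blacktriangle\big)$ and bounds this by $\sup_{w}\delta(F_{w,\lambda_1}\blacktriangle,F_{w,\lambda_2}\blacktriangle)\leq 2|\lambda_1-\lambda_2|$ via Proposition \ref{prop32} applied to the fixed set $\blacktriangle$. You merely make explicit the uniform $2^{-m}$ cell-diameter bound that justifies the limit step, which the paper leaves implicit.
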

\begin{proof}
	We have 
	\[\begin{aligned}
	\delta(\AG_{\lambda_1},\AG_{\lambda_2})&=\lim_{m\to\infty}\delta(\bigcup_{w\in W_m}F_{w,\lambda_1}\blacktriangle, \bigcup_{w\in W_m}F_{w,\lambda_2}\blacktriangle)\\
	&\leq \sup_{m\geq 0}\sup_{w\in W_m} \delta(F_{w,\lambda_1}\blacktriangle, F_{w,\lambda_2}\blacktriangle)\leq 2|\lambda_1-\lambda_2|,
	\end{aligned}\]
	as $\AG_\lambda=\lim_{m\to\infty}\big(\bigcup_{w\in W_m}F_{w,\lambda}\blacktriangle\big)$ for any $0<\lambda<\frac{1}{2}$.
\end{proof}

Next, we talk about the approximating graphs of $\AG_\lambda$. 

\begin{definition}\label{def34}
	We define a connected graph $G_{m,\lambda}=\big(V_{m,\lambda},E_{m,\lambda}\big)$ as follows. 
	
	(a). Let $V_0=V_{0,\lambda}=\{p_1,p_2,p_3\}$. For $m\geq 1$, we define $V_{m,\lambda}=\bigcup_{w\in W_m} F_{w,\lambda}V_0.$
	
	(b). For $m\geq 0$, define 
	\[E_{m,\lambda}=\big\{\{x,y\}\subset V_{m,\lambda}:x\neq y,\text{ there exists }w\in W_m\text{ such that  }\{x,y\}\subset F_{w,\lambda}\AG_\lambda\big\}.\]
	
We also write $V_{*,\lambda}=\bigcup_{m=0}^\infty V_{m,\lambda}$.
\end{definition}

See Figure \ref{fig31} for an illustration of the approximating graphs $G_{m,\lambda}$.

\begin{figure}[htp]
	\includegraphics[width=4.5cm]{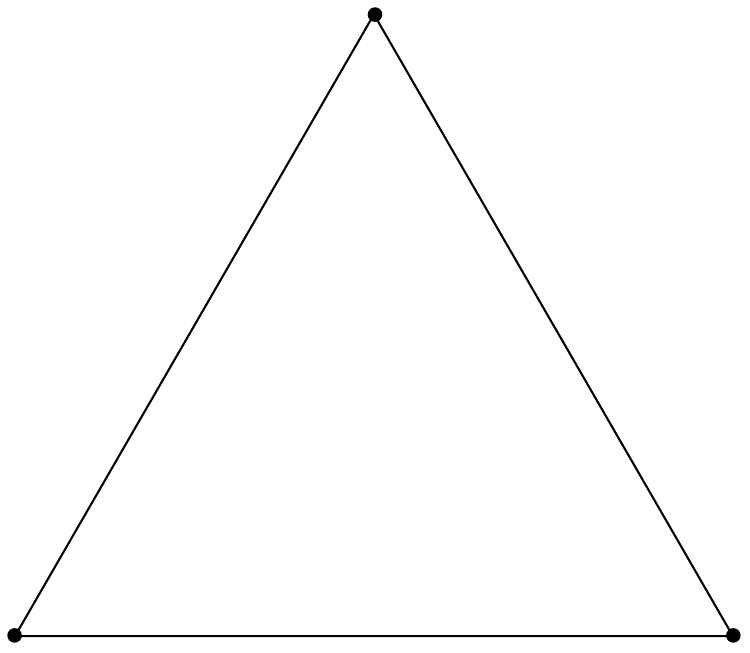}\qquad
	\includegraphics[width=4.5cm]{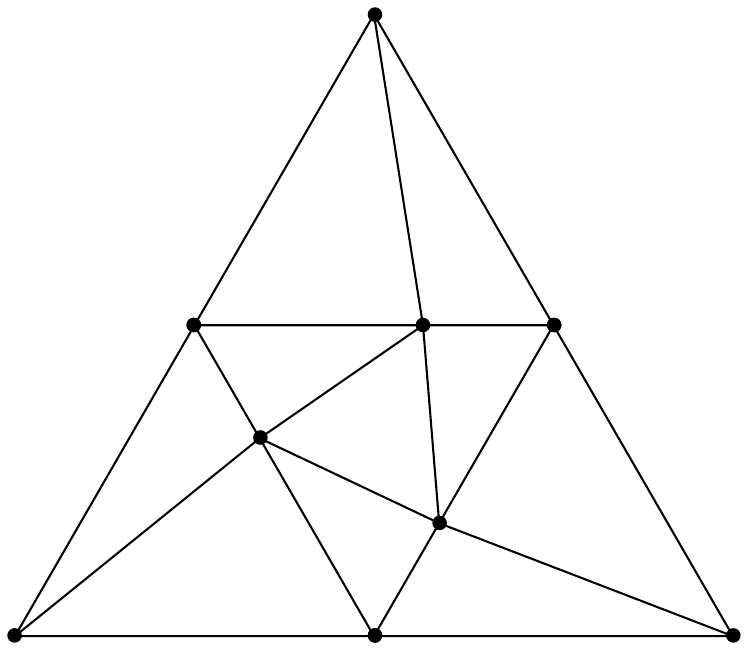}\qquad
	\includegraphics[width=4.5cm]{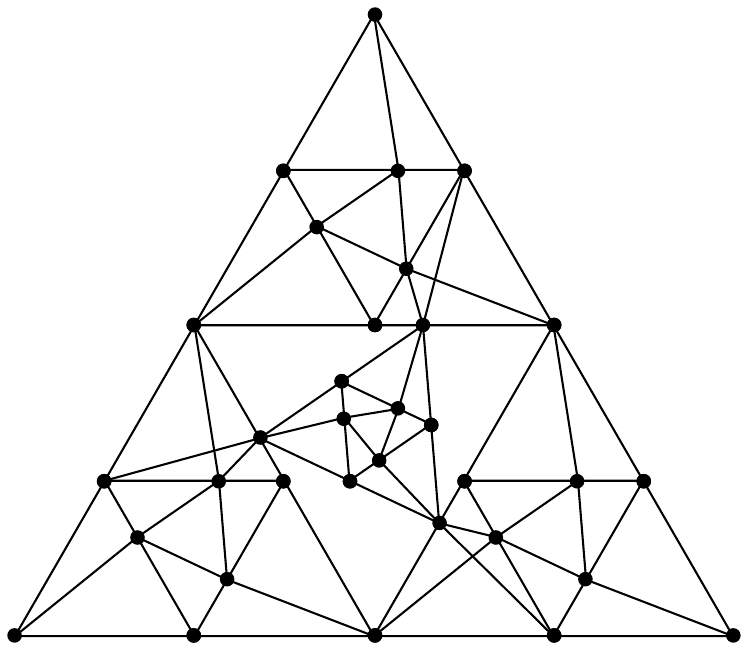}
	\caption{The graphs $G_{0,\lambda}, G_{1,\lambda}$ and $G_{2,\lambda}$.}\label{fig31}
\end{figure}

Our construction here is an example of the finitely ramified cell structure \cite{T}. However, $\AG_\lambda$ is not a finitely ramified self-similar set in the sense of Definition 7.1 of \cite{T} when $\lambda\notin \mathbb{Q}$. In fact, let 
\[
\tilde{V}_{0,\lambda}=\bigcup_{m=0}^\infty\bigcup_{w\in W_m} F_{w,\lambda}^{-1}\big(V_{m,\lambda}\cap F_{w,\lambda}\AG_{\lambda}\big).
\]
It is not hard to see that $\# \tilde{V}_{0,\lambda}<\infty$ if and only if $\lambda\in \mathbb{Q}\cap (0,\frac{1}{2})$. See Figure \ref{fig32} for examples of $\tilde{V}_{0,\lambda}$, $\lambda\in \mathbb{Q}$. 

\begin{figure}[htp]
	\includegraphics[width=4cm]{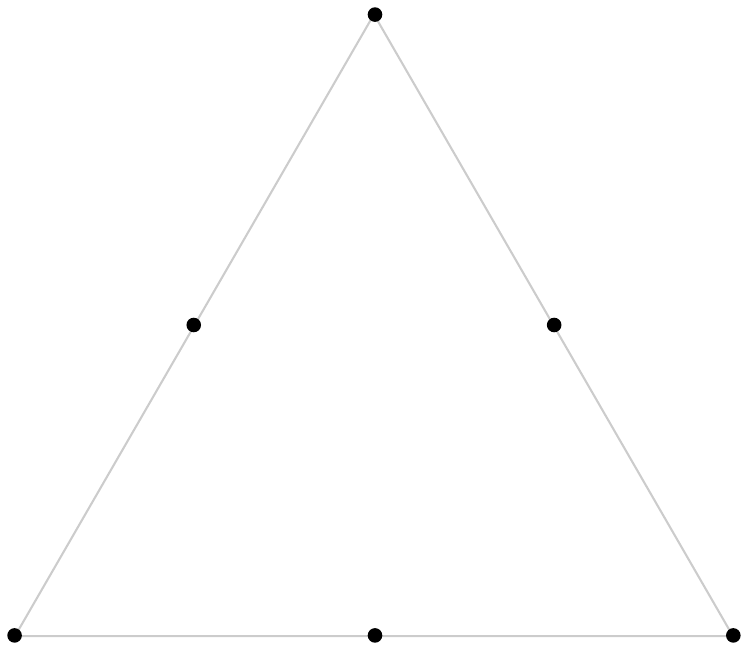}\qquad
	\includegraphics[width=4cm]{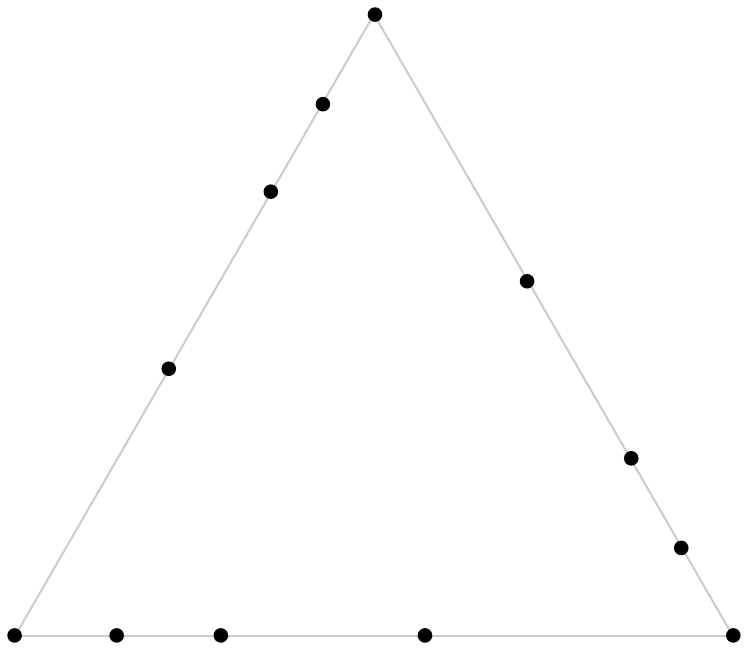}\qquad
	\includegraphics[width=4cm]{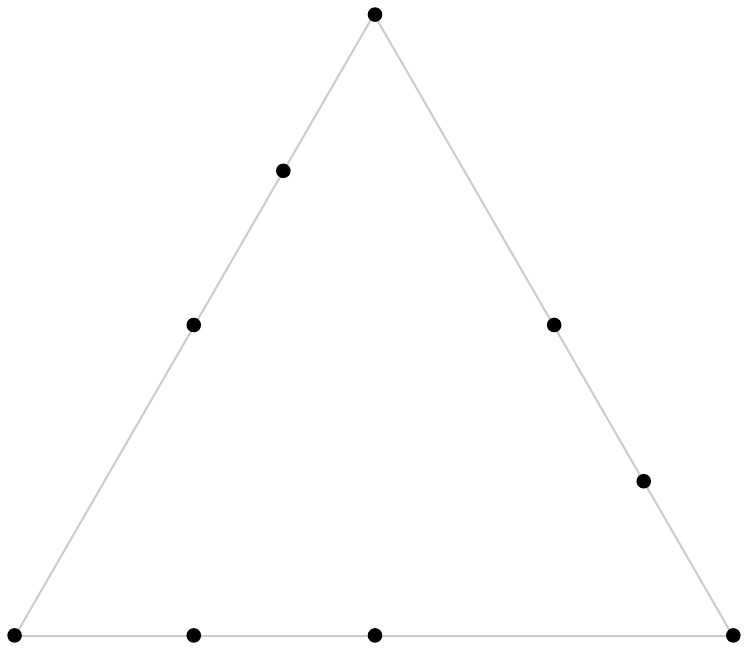}\vspace{0.1cm}
	\begin{picture}(0,0)
	\put(-350,-10){$\lambda=1/4$}
	\put(-212,-10){$\lambda=1/7$}
	\put(-78,-10){$\lambda=1/8$}
	\end{picture}
	\caption{The vertice set $\tilde{V}_{0,\lambda}$.}\label{fig32}
\end{figure}

$\AG_\lambda$ is also not p.c.f. when $\lambda\notin \mathbb{Q}$. In fact,  $\AG_\lambda$ is p.c.f. if and only if $\#\pi^{-1}_\lambda(\tilde{V}_{0,\lambda})<\infty$, where $\pi_\lambda:\{1,2,3,4\}^\infty\to \AG_\lambda$ is defined by
\[\{\pi_\lambda(\omega)\}=\bigcap_{m=1}^\infty F_{[\omega]_m}\AG_\lambda,\quad \forall \omega=\omega_1\omega_2\cdots\in \{1,2,3,4\}^\infty,\]
where $[\omega]_m=\omega_1\omega_2\cdots \omega_m$ for $m\geq 1$. See the book \cite{ki3} for the full definition of p.c.f. self-similar sets. In fact, the class $\AG_\lambda,\lambda\in (0,\frac{1}{2})$ was raised in the book \cite{B} to show the delicacy of the conditions defining p.c.f. self-similar sets.

\subsection{Main results}
The following classes will be the main objects that we study in the rest of this paper. 

\begin{definition}\label{def35}
	Let $0<r,s<1$. We say a resistance form $(\mcE,\mcF)$ on $\AG_\lambda$ is self-similar with renormalization factors $r_1=r_2=r_3=r$ and $r_4=s$ if $f\in \mcF$ implies $f\circ F_{i,\lambda}\in \mcF,\forall 1\leq i\leq4$ and
	\[
	\mcE(f)=\sum_{i=1}^4 r_i^{-1}\mcE(f\circ F_{i,\lambda}). 
	\]
	Let $\mathcal{G}$ be the group of rotations that acts as $A_3$ on $V_0$. We say the form $(\mcE,\mcF)$ is $\mathcal{G}$-symmetric if $f\in \mathcal{F}$ implies $f\circ \sigma\in \mathcal{F}$ and $\mcE(f)=\mcE(f\circ \sigma)$ for any $\sigma\in \mathcal{G}$. In addition, we say $(\mcE,\mcF)$ is regular if  $\mcF$ is a dense subset of $C(\AG_\lambda)$ with respect to the supremum norm.
	
	Let $\mathscr{E}(\lambda,s,r)$ denote the family of regular $\mathcal{G}-$symmetric self-similar resistance forms $(\mcE,\mcF)$ on $\AG_\lambda$ with renormalization factors $r_1=r_2=r_3=r$ and $r_4=s$.
\end{definition}

Our main theorem states that the symmetric self-similar resistance forms are uniquely determined by two coefficients $\lambda,s$, where $\lambda$ determines the topology of the fractal, and $s$ determines the `size' of the added cell $F_{4,\lambda}\AG_\lambda$. 

\begin{theorem}\label{thm36}
	Let $s\in (0,1)$ and $\lambda\in (0,\frac{1}{2})$. Then there is a unique $r\in [\frac{3}{5},1)$ such that $\mathscr{E}(\lambda,s,r)\neq\emptyset$. In addition, the regular $\mathcal{G}$-symmetric self-similar resistance form $(\mathcal{E},\mathcal{F})\in \mathscr{E}(\lambda,s,r)$ is unique up to scalar multiples. 
\end{theorem}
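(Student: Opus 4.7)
The strategy is to establish the theorem first for $\lambda$ dyadic rational (where $\AG_\lambda$ is p.c.f. and Section 4 handles matters by classical fixed-point methods on $V_0$), and then transfer to arbitrary $\lambda\in (0,\tfrac{1}{2})$ using the $\Gamma$-convergence framework of Section 2 together with the uniform resistance estimates to be proved in Section 5.

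In the p.c.f. case, $\mathcal{G}$-symmetry and transitivity on $V_0=\{p_1,p_2,p_3\}$ force any $\mathcal{G}$-symmetric resistance form on $V_0$ to be a scalar multiple of the reference form $\mcE_{V_0}(f)=\sum_{i<j}(f(p_i)-f(p_j))^2$. Self-similarity of a putative $(\mcE,\mcF)\in \mathscr{E}(\lambda,s,r)$, pushed down to $V_0$ via the trace operator, collapses to a single scalar equation $T_{\lambda,s}(r)=1$, where $T_{\lambda,s}(r)$ is the symmetric boundary conductance of the level-$1$ electric network with conductances scaled by $r^{-1}$ on the three subcopies $F_{i,\lambda}\AG_\lambda$ ($i=1,2,3$) and by $s^{-1}$ on $F_{4,\lambda}\AG_\lambda$, assembled through their (finitely many, by p.c.f.) shared vertices. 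Monotonicity of effective resistance under conductance changes forces $T_{\lambda,s}$ to be strictly monotone in $r$, giving existence and uniqueness of $r\in [\tfrac{3}{5},1)$; the lower bound comes from comparison with the standard Sierpinski gasket substructure, and the upper bound $r<1$ from a uniform coercivity estimate proved in Section 5. Once $r$ is fixed, the self-similarity relation recursively determines the trace of the form on every $V_{m,\lambda}$ from its trace on $V_0$, so the form is unique up to scalar.

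For general $\lambda$, I choose dyadic rationals $\lambda_n\to \lambda$. Corollary \ref{coro33} gives $\delta(\AG_{\lambda_n},\AG_\lambda)\to 0$ in $B=\blacktriangle$, and the previous step supplies $r_n\in [\tfrac{3}{5},1)$ and $(\mcE_n,\mcF_n)\in \mathscr{E}(\lambda_n,s,r_n)$ with resistance metrics $R_n$. Invoking the resistance bounds from Section 5, which will yield $\psi_1,\psi_2$ independent of $n$, Theorem \ref{thm29} produces a subsequence with $R_{n_k}\rightarrowtail R\in \mathcal{RM}(\AG_\lambda)$, and after a further extraction $r_{n_k}\to r\in [\tfrac{3}{5},1)$. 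Theorem \ref{thm213} upgrades this to $\Gamma$-convergence of $(\mcE_n,\mcF_n)$ on $C(B)$ to the resistance form $(\mcE,\mcF)$ of $R$. By Proposition \ref{prop32} the maps $F_{i,\lambda_n}$ converge to $F_{i,\lambda}$ uniformly on $\blacktriangle$, so given any recovery sequence $f_n\rightarrowtail f$ provided by Definition \ref{def211}(b), the composed sequence $f_n\circ F_{i,\lambda_n}$ serves as a recovery sequence for $f\circ F_{i,\lambda}$, and conversely the $\liminf$ inequality transports under the same composition. Passing to the limit in $\mcE_n=\sum_{i=1}^4 r_{i,n}^{-1}\mcE_n\circ F_{i,\lambda_n}$ therefore yields the self-similarity relation for $(\mcE,\mcF)$ with renormalization factors $(r,r,r,s)$, and $\mathcal{G}$-symmetry passes trivially since $\mathcal{G}$ acts identically on every $\AG_{\lambda_n}$ and on $\AG_\lambda$. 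Hence $\mathscr{E}(\lambda,s,r)\neq \emptyset$.

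For uniqueness at general $\lambda$, if $(\mcE^{(i)},\mcF^{(i)})\in \mathscr{E}(\lambda,s,r_i)$ for $i=1,2$, I normalize both so their traces on $V_0$ coincide, which is possible since $\mathcal{G}$-symmetric resistance forms on $V_0$ are scalar multiples of $\mcE_{V_0}$. The self-similarity equations, iterated on the dense set $V_{*,\lambda}$, reduce to the same scalar fixed-point equation $T_{\lambda,s}(r_i)=1$ of Step 1 once one traces out all vertices past the first generation; strict monotonicity then forces $r_1=r_2$, and the traces on $V_{m,\lambda}$ agree for all $m$, yielding $\mcE^{(1)}=\mcE^{(2)}$ by continuity in the resistance metric. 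The main obstacle is the preservation of self-similarity under the $\Gamma$-convergence of Theorem \ref{thm213} when the similarity maps $F_{i,\lambda_n}$ themselves vary with $n$; this is where Proposition \ref{prop32}, the recovery-sequence flexibility in Definition \ref{def211}(b), and the equicontinuity of $R_n$ guaranteed by Section 5 must be combined carefully, in particular to ensure that the compositions $f_n\circ F_{i,\lambda_n}^{-1}$ remain uniformly bounded and equicontinuous so that Lemma \ref{lemma22} applies and the scaling identity survives in the limit.
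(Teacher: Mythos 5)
Your overall architecture (dyadic case first, then transfer by $\Gamma$-convergence) matches the paper, and your existence argument is essentially the paper's, up to one fixable point: for the inequality $\mcE(f)\leq r^{-1}\sum_{i=1}^3\mcE(f\circ F_{i,\lambda})+s^{-1}\mcE(f\circ F_{4,\lambda})$ you need a single sequence $f_k$ that is \emph{simultaneously} a recovery sequence for all four $f\circ F_{i,\lambda}$; an arbitrary recovery sequence for $f$ need not restrict to recovery sequences on the cells. The paper handles this with the strengthened statement (b'') after Theorem \ref{thm213}, prescribing the values $f_k(F_{i,\lambda_{n_k}}p_j)=f(F_{i,\lambda}p_j)$ at the junction points. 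You flag this as "the main obstacle" but do not actually resolve it.

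The genuine gap is in your uniqueness argument, which rests on two false reductions. First, even in the dyadic case the renormalization fixed-point problem does not "collapse to a single scalar equation $T_{\lambda,s}(r)=1$" on $V_0$: the relevant boundary set is $\tilde{V}_{0,\lambda}$, which contains many more than three points, and the operator $\Lambda_{\bm r}$ acts on the cone of $\mathcal G$-symmetric resistance forms on $\tilde V_{0,\lambda}$, which is high-dimensional (the trace of $\sum_i r_i^{-1}\mcE(\cdot\circ F_{i,\lambda})$ onto $V_0$ is not a function of the trace of $\mcE$ onto $V_0$ alone). Uniqueness of the fixed point is therefore a genuinely nontrivial statement; the paper obtains it from Sabot's theorem after proving that there is no non-trivial preserved $\mathcal G$-relation (Proposition \ref{prop45}), a combinatorial argument your proposal omits entirely. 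Second, for irrational $\lambda$ there is no finite cell boundary at all, so "iterating the self-similarity equations on $V_{*,\lambda}$ to reduce to the same scalar fixed-point equation" is not meaningful, and the claim that the trace on $V_0$ plus self-similarity "recursively determines the trace on every $V_{m,\lambda}$" is false for the same reason: the decimation does not close on a finite vertex set. The paper's uniqueness proof instead goes through the Barlow--Bass--Kumagai--Teplyaev perturbation argument: Lemma \ref{lemma67} shows $(1+\delta)\mcE_2-\mcE_1$ stays in $\mathscr{E}(\lambda,s,r)$, Lemma \ref{lemma68} gives a uniform bound on $\sup(\mcE_1|\mcE_2)/\inf(\mcE_1|\mcE_2)$ via the boundary resistance estimates of Proposition \ref{prop54} and Lemma \ref{lemma27}, and letting $\delta\to 0$ yields a contradiction; uniqueness of $r$ is a separate comparison of harmonic extensions over $\{1,2,3\}^m$-cells. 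None of this is recoverable from the argument you sketch, so the uniqueness half of the theorem is not proved by your proposal.
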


\noindent\textbf{Remark.} The above results can be extended to $s\in (0,\infty)$ with the following modifications.

\noindent1. Define $\mathscr{E}(\lambda,s,r)$ to be the set of self-similar resistance forms on $\bigcup_{w\in W_*}F_{w,\lambda}\partial\blacktriangle$.

\noindent2. With a suitable measure $\mu$, $(\mcE,\mcF)$ becomes a local regular irreducible conservative Dirichlet form on $L^2(\AG_\lambda,\mu)$. 

See Section 7 for some supplements.\vspace{0.15cm}

In the rest of this paper, we will use the following notations for convenience.

\begin{definition}\label{def37}
	Let $0<\lambda<\frac{1}{2}$ and $0<s<1$. By Theorem \ref{thm36}, we can introduce the following notations. 
	
	(a). We let $r(\lambda,s)$ be the unique number in $(0,1)$ such that $\mathscr{E}(\lambda,s,r(\lambda,s))\neq \emptyset$. 
	
	(b). We define $(\mcE_{\lambda,s},\mcF_{\lambda,s})$ to be the unique form in $\mathscr{E}(\lambda,s,r(\lambda,s))$ such that 
	\[R_{\lambda,s}(p_1,p_2)=R_{\lambda,s}(p_2,p_3)=R_{\lambda,s}(p_3,p_1)=\frac{2}{3},\]
	where we also define
	\[R_{\lambda,s}(x,y)=\sup\big\{\frac{|f(x)-f(y)|^2}{\mcE_{\lambda,s}(f)}:f\in \mcF_{\lambda,s}\setminus Constants\big\}.\]
\end{definition}

We will see that $\mcE_{\lambda,s}$ changes continuously as a consequence of the uniqueness theorem in Theorem \ref{thm36}.
\begin{theorem}\label{thm38}
	Let $\{\lambda_n\}_{n\geq 1}\subset (0,\frac{1}{2})$ and $\{s_n\}_{n\geq 1}\subset (0,1)$. If $\lambda_n\to \lambda\in (0,\frac{1}{2})$ and $s_n\to s\in (0,1)$, then $R_{\lambda_n,s_n}\rightarrowtail R_{\lambda,s}$ and clearly $(\mcE_{\lambda_n,s_n},\mcF_{\lambda_n,s_n})$ $\Gamma$-converges to $(\mcE_{\lambda,s},\mcF_{\lambda,s})$ on $C(\blacktriangle)$. 
\end{theorem}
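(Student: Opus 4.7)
The plan is to combine the compactness machinery of Section~2 with the uniqueness statement in Theorem~\ref{thm36}. Setting $B=\blacktriangle$, $A_n=\AG_{\lambda_n}$ and $A=\AG_\lambda$, Corollary~\ref{coro33} gives $\delta(\AG_{\lambda_n},\AG_\lambda)\leq 2|\lambda_n-\lambda|\to 0$, verifying (C1). The resistance estimates to be proved in Section~5 will provide $\psi_1,\psi_2\in C[0,\infty)$ as required in Theorem~\ref{thm29}, uniformly over $(\lambda_n,s_n)$ in any compact subset of $(0,\tfrac12)\times(0,1)$. Applying Theorem~\ref{thm29} extracts a subsequence $\{n_k\}$ along which $R_{\lambda_{n_k},s_{n_k}}\rightarrowtail R^*$ for some $R^*\in\mathcal{RM}(\AG_\lambda)\cap C(\AG_\lambda^2)$, and Theorem~\ref{thm213} upgrades this to $\Gamma$-convergence of the associated resistance forms $(\mcE_{\lambda_{n_k},s_{n_k}},\mcF_{\lambda_{n_k},s_{n_k}})$ to the resistance form $(\mcE^*,\mcF^*)$ of $R^*$ on $C(\blacktriangle)$.

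The next step is to identify $(\mcE^*,\mcF^*)$ with $(\mcE_{\lambda,s},\mcF_{\lambda,s})$. Passing to a further subsequence, $r(\lambda_{n_k},s_{n_k})\in[\tfrac35,1)$ converges to some $r^*\in[\tfrac35,1]$. Since $p_i\in\AG_{\lambda_n}\cap\AG_\lambda$ for all $i,n$ and $R_{\lambda_n,s_n}(p_i,p_j)=\tfrac23$, the normalization $R^*(p_i,p_j)=\tfrac23$ is automatic. The $\mathcal{G}$-symmetry transfers by applying $\Gamma$-convergence to $f$ and $f\circ\sigma$, as each $\sigma\in\mathcal{G}$ stabilizes every $\AG_{\lambda_n}$. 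For self-similarity, given $f\in C(\blacktriangle)$ and a recovery sequence $\{f_n\}$ with $f_n\rightarrowtail f|_{\AG_\lambda}$ and $\mcE_{\lambda_n,s_n}(f_n)\to\mcE^*(f)$, Proposition~\ref{prop32} gives $F_{i,\lambda_n}\to F_{i,\lambda}$ uniformly, so $f_n\circ F_{i,\lambda_n}\rightarrowtail f\circ F_{i,\lambda}|_{\AG_\lambda}$, and lower semicontinuity (Definition~\ref{def211}(a$'$)) yields
\[
\mcE^*(f)\;=\;\lim_{n\to\infty}\Big(\sum_{i=1}^3 r(\lambda_n,s_n)^{-1}\mcE_{\lambda_n,s_n}(f_n\circ F_{i,\lambda_n})+s_n^{-1}\mcE_{\lambda_n,s_n}(f_n\circ F_{4,\lambda_n})\Big)\;\geq\;\sum_{i=1}^4 (r_i^*)^{-1}\mcE^*(f\circ F_{i,\lambda}),
\]
with $r_1^*=r_2^*=r_3^*=r^*$ and $r_4^*=s$. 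The reverse inequality is obtained by assembling recovery sequences on the individual cells into a single recovery sequence on $\AG_{\lambda_n}$; here the strengthening (b$''$) after Theorem~\ref{thm213}, which allows recovery sequences to match prescribed values on a finite vertex set, is what makes the cells glue consistently at the junctions in $V_{1,\lambda_n}$. The case $r^*=1$ is excluded because the electrical network determined by self-similarity on $V_{1,\lambda}$ and the normalization $R^*(p_i,p_j)=\tfrac23$ forces $r^*\in[\tfrac35,1)$, as in the analysis of dyadic-rational cases in Section~4.

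Having placed $(\mcE^*,\mcF^*)\in\mathscr{E}(\lambda,s,r^*)$ with the standard normalization, Theorem~\ref{thm36} forces $r^*=r(\lambda,s)$ and $(\mcE^*,\mcF^*)=(\mcE_{\lambda,s},\mcF_{\lambda,s})$, whence $R^*=R_{\lambda,s}$. Since every subsequence of $\{R_{\lambda_n,s_n}\}$ admits a further subsequence converging to this same limit, a standard subsequence argument in the compact space produced by Theorem~\ref{thm29} yields $R_{\lambda_n,s_n}\rightarrowtail R_{\lambda,s}$ along the full sequence, and Theorem~\ref{thm213} gives the stated $\Gamma$-convergence on $C(\blacktriangle)$.

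The main obstacle is the preservation of self-similarity under $\Gamma$-convergence on a moving domain: the cells $F_{i,\lambda_n}\AG_{\lambda_n}$ and their junction points vary with $n$, so the recovery sequences constructed independently on each cell must be forced to agree at these changing junctions before they can be glued into a global recovery sequence on $\AG_{\lambda_n}$. The refined version (b$''$) of the recovery-sequence condition, together with the uniform Hölder estimate $|f(x)-f(y)|^2\leq\mcE^*(f)\,R^*(x,y)$, is designed precisely to absorb this obstacle.
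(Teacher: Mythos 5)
Your proposal is correct and follows essentially the same route as the paper: the paper's proof of Theorem \ref{thm38} is a compactness-plus-uniqueness subsequence argument that simply cites the existence proof of Section 6.1 (which is exactly the Theorem \ref{thm29}/\ref{thm213} extraction and the self-similarity transfer via the recovery sequences of (b'') that you spell out) together with the uniqueness in Theorem \ref{thm36}. The only cosmetic difference is that the exclusion of $r^*=1$ is obtained in the paper directly from the uniform bound of Proposition \ref{prop56} rather than from your network argument, which is a cleaner way to close that step.
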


As a consequence of Theorem \ref{thm217}, we also have the following. 
\begin{corollary}\label{coro39}
	Let $\alpha>0$, and let $\{\mu_\lambda\}_{\lambda\in (0,\frac{1}{2})}$ be a class of probability measures on $\blacktriangle$, with $\AG_\lambda$ being the support of $\mu_\lambda$ for each $\lambda\in (0,\frac{1}{2})$. Also, for $\lambda\in (0,\frac{1}{2}),s\in (0,1)$, let $u^{(\alpha)}_{\lambda,s}\in C(\AG_\lambda\times \AG_{\lambda})$ being the resolvent kernel associated with $(\mcE_{\lambda,s},\mcF_{\lambda,s})$, i.e. 
	\[f(x)=\mcE\big(u^{(\alpha)}_{\lambda,s}(x,\cdot),f\big)+\alpha\int_{\AG_\lambda} f(y)u^{(\alpha)}_{\lambda,s}(x,y)d\mu_\lambda(y),\quad \forall f\in \mcF_{\lambda,s}.\]
	If $\mu_\lambda$ is continuous in $\lambda$ with respect to the weak convergence topolgy on $\blacktriangle$. Then  $u^{(\alpha)}_{\lambda,s}(x,y)$ is continuous on $(0,1)\times \wwAG$, where 
	\[\wwAG=\{(\lambda,x,y):\lambda\in (0,\frac{1}{2}), x,y\in \AG_\lambda\},\]
	and we view $u^{(\alpha)}:(s,\lambda,x,y)\to \mathbb{R}$.
\end{corollary}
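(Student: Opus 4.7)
The plan is to reduce the corollary to a direct application of Theorem \ref{thm217}, combined with the continuity statement Theorem \ref{thm38}. Since $(0,1)\times\wwAG$ is metrizable (it inherits its topology as a subset of $(0,1)\times(0,\tfrac12)\times\blacktriangle\times\blacktriangle$), continuity is a sequential property. I would therefore fix an arbitrary convergent sequence $(s_n,\lambda_n,x_n,y_n)\to(s,\lambda,x,y)$ in $(0,1)\times\wwAG$, meaning $s_n\to s$, $\lambda_n\to\lambda$, $x_n,y_n\in\AG_{\lambda_n}$, $x,y\in\AG_\lambda$, and $x_n\to x$, $y_n\to y$ in the Euclidean metric. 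The aim reduces to showing
\[
u^{(\alpha)}_{\lambda_n,s_n}(x_n,y_n)\;\longrightarrow\;u^{(\alpha)}_{\lambda,s}(x,y).
\]

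To set up the framework of Section 2, I take the ambient compact metric space to be $(B,d)=(\blacktriangle,d_{\mathrm{Eucl}})$, put $A_n=\AG_{\lambda_n}$, $A=\AG_\lambda$, $\mu_n=\mu_{\lambda_n}$, $\mu=\mu_\lambda$. Condition (C1) is provided by Corollary \ref{coro33}, since $|\lambda_n-\lambda|\to 0$ implies $\delta(\AG_{\lambda_n},\AG_\lambda)\to 0$. Condition (C2) is exactly the hypothesis of weak-continuity of $\lambda\mapsto\mu_\lambda$. Theorem \ref{thm38} applied to the sequence $(\lambda_n,s_n)\to(\lambda,s)$ gives $R_{\lambda_n,s_n}\rightarrowtail R_{\lambda,s}$. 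The resistance estimates that are established in Section 5 (and used to verify the hypotheses of Theorem \ref{thm38}) additionally ensure $R_{\lambda_n,s_n}\in\mathcal{RM}(A_n)\cap C(A_n^2)$ and $R_{\lambda,s}\in\mathcal{RM}(A)\cap C(A^2)$, so all hypotheses of Theorem \ref{thm217} are met. Consequently $u^{(\alpha)}_{\lambda_n,s_n}\rightarrowtail u^{(\alpha)}_{\lambda,s}$ in the sense of Definition \ref{def21}, applied to the product spaces $A_n\times A_n\subset \blacktriangle\times\blacktriangle$. Specializing the defining property of $\rightarrowtail$ to the particular converging sequence of points $(x_n,y_n)\to(x,y)$ yields the desired limit, which, since the sequence was arbitrary, proves joint continuity on $(0,1)\times\wwAG$.

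The step I expect to be conceptually delicate is not this reduction but rather the invocation of Theorem \ref{thm38}: its proof must upgrade the subsequential conclusion of Theorem \ref{thm29} to convergence along the full sequence $(\lambda_n,s_n)$, and for that one uses the uniqueness clause of Theorem \ref{thm36} together with uniform (in a neighbourhood of $(\lambda,s)$) two-sided bounds $\psi_1(d(x,y))\le R_{\lambda',s'}(x,y)\le\psi_2(d(x,y))$. Once Theorem \ref{thm38} is granted, the corollary itself is essentially an immediate corollary of Theorem \ref{thm217}, with no further technical content beyond unpacking the definition of $\rightarrowtail$ on product spaces.
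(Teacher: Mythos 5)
Your proposal is correct and follows exactly the route the paper intends: the corollary is stated as a direct consequence of Theorem \ref{thm217}, with Theorem \ref{thm38} supplying the hypothesis $R_{\lambda_n,s_n}\rightarrowtail R_{\lambda,s}$, Corollary \ref{coro33} giving (C1), the weak-continuity assumption giving (C2), and the Section 5 estimates giving continuity of the resistance metrics. Your reduction of joint continuity to sequential continuity and the unpacking of $\rightarrowtail$ on the product spaces $A_n\times A_n$ is precisely the intended (and omitted) argument.
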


\section{Dyadic rational cases}
We denote the set of dyadic rationals by $\mathbb{D}$, i.e.
\[\mathbb{D}=\{\frac{k}{2^m}:k\in \mathbb{Z},m\in \mathbb{N}\}.\]
Also, for $m\geq 1$, define 
\[\mathbb{D}_m=\{\frac{k}{2^m}:k\in \mathbb{Z}\}.\]
We view $\AG_\lambda,\lambda\in \mathbb{D}$ as the simplest cases, in that $\tilde{V}_{0,\lambda}\subset \bigcup_{w\in \{1,2,3\}^m}F_{w,\lambda}V_0$ for some $m\geq 1$ (when $\lambda\in \mathbb{D}_{m+1}$). In particular, $\AG_\lambda$ is p.c.f. when $\lambda\in \mathbb{D}$.

It is well-known that the existence of a self-similar resistance form on a p.c.f. self-similar set is equivalent to the existence of a solution to a fixed point problem of certain electrical networks \cite{ki3}. Let's briefly recall the fact below.

For each resistance form $\mcD$ on $\tilde{V}_{0,\lambda}$ and $\bm{r}=\{r_i\}_{i=1}^4\in \mathbb{R}_+^4$, we define $\Lambda_r\mcD$ on $\tilde{V}_{0,\lambda}$ by
\[\Lambda_{\bm{r}}\mcD(f)=\min\{\sum_{i=1}^4 r_{i}^{-1}\mcD(g\circ F_{i,\lambda}):g\in l(\tilde{V}_{1,\lambda}),g|_{\tilde{V}_{0,\lambda}}=f\},\quad\forall f\in l(\tilde{V}_{0,\lambda}),\]
where we have
\[\tilde{V}_{m,\lambda}=\bigcup_{w\in W_m}F_{w,\lambda}\tilde{V}_{0,\lambda}\] for $m\geq1$. For $r,s\in (0,1)$, 
$\mathscr{E}(\lambda,s,r)\neq \emptyset$ if and only if there is $\mcD\in \mathcal{RF}(\tilde{V}_{0,\lambda})$ such that $\mcD=\Lambda_{\bm{r}}\mcD$, with $r_1=r_2=r_3=r$ and $r_4=s$. 

The fixed point problem has been studied in rich contexts \cite{M1,M2,Mosco,Pe}. In particular, Sabot first solved the uniqueness problem of resistance forms on nest fractals in his celebrated work \cite{Mosco}. He introduced the concept of preserved $\mathcal{G}-$relation to describe possible degenerated fixed points, where $\mathcal{G}$ is a symmetry group acting on the fractal.  We will use partial results from Sabot's paper \cite{Sabot}, which are translated into Definition \ref{def41} and Theorem \ref{thm42} as follows.

\begin{definition}\label{def41}
	Let $\mathcal{J}$ be an equivalence relation on $\tilde{V}_{0,\lambda}$. Recall that $\mathcal{G}$ is the group of isometries of $\AG_\lambda$ that acts as the permutation group $A3$ on $V_0$.
	
	(a). Define $\mathcal{J}^{(1)}$ to be the smallest equivalence relation on $\tilde{V}_{1,\lambda}$ such that
	\[x \mathcal{J} y\Longrightarrow F_{i,\lambda}(x) \mathcal{J}^{(1)}F_{i,\lambda}(y), \quad 1\leq i\leq 4.\]
	
	(b). Call $\mathcal{J}$ a preserved $\mathcal{G}-$relation if for any $x,y \in \tilde{V}_{0,\lambda}$,
	\[x \mathcal{J} y\Longrightarrow \sigma(x) \mathcal{J} \sigma(y),\quad \forall \sigma\in \mathcal G,\]
	and
	\[x \mathcal{J} y\Longleftrightarrow x \mathcal{J}^{(1)}y.\]
\end{definition}

We say that a $\mathcal{G}-$preserved relation $\mathcal{J}$ is trivial, if either $\mathcal{J}$ is the full relation ($\mathcal{J}=1$, i.e. $x\mathcal{J} y,\forall x,y\in \tilde{V}_{0,\lambda}$) or the empty relation ($\mathcal{J}=0$, i.e. $x\mathcal{J} y$ implies $x=y$).

\begin{theorem}[\cite{Sabot}]\label{thm42}
	Let $\bm{r}=\{r_i\}_{i=1}^4$ with $r_1=r_2=r_3\in (0,\infty)$ and $r_4\in (0,\infty)$. If there is no non-trivial preserved $\mathcal G$-relation, then we have exactly one (up to scale multiples) $\mathcal G$-symmetric $\mcD\in \mathcal{RF}(\tilde{V}_{0,\lambda})$ (i.e. $\mcD(f)=\mcD(f\circ\sigma)$ for any $f\in l(\tilde{V}_{0,\lambda})$ and $\sigma\in \mathcal{G}$) such that 
	\[\Lambda_{\bm{r}}\mcD=C\mcD,\]
	where $C$ is a constant uniquely determined by $\bm{r}$.
\end{theorem}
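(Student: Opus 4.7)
The plan is to treat this as a nonlinear Perron--Frobenius problem on the cone of $\mathcal{G}$-symmetric quadratic forms on $\tilde{V}_{0,\lambda}$. Let $\mcQ_{\mcG}$ denote the finite-dimensional cone of $\mcG$-symmetric, nonnegative, symmetric bilinear forms on $\tilde{V}_{0,\lambda}$ that vanish on constants, and let $\mcC_{\mcG}\subset\mcQ_{\mcG}$ be the open sub-cone of $\mcG$-symmetric resistance forms. The map $\Lambda_{\bm{r}}$ is defined on $\mcC_{\mcG}$ by a finite-dimensional quadratic minimization, hence is continuous and positively $1$-homogeneous, and it extends continuously to all of $\mcQ_{\mcG}$. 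Pass to the projective quotient $P\mcQ_{\mcG}$, which is a compact convex finite-dimensional simplex-like set; then $\Lambda_{\bm{r}}$ descends to a continuous self-map $\widetilde{\Lambda}_{\bm{r}}$ of $P\mcQ_{\mcG}$.

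For \emph{existence}, I would apply the Brouwer fixed point theorem to $\widetilde{\Lambda}_{\bm{r}}$ on $P\mcQ_{\mcG}$ to obtain $[\mcD]$ with $\Lambda_{\bm{r}}\mcD=C\mcD$ for some $C\geq 0$. The key reduction then is that this fixed form actually lies in $\mcC_{\mcG}$, i.e.\ is not boundary-degenerate. Attached to any $\mcD\in\mcQ_{\mcG}$ is the equivalence relation $\mathcal{J}_{\mcD}$ on $\tilde{V}_{0,\lambda}$ given by $x\mathcal{J}_{\mcD}y$ iff $f(x)=f(y)$ for every $f$ with $\mcD(f)=0$ (working in the $\mcG$-symmetric quotient); if $\mcD\notin \mcC_{\mcG}$ this relation is non-trivial, it is $\mcG$-invariant by construction, and a direct inspection of the minimization defining $\Lambda_{\bm{r}}$ shows $\mathcal{J}_{\Lambda_{\bm{r}}\mcD}=\mathcal{J}_{\mcD}^{(1)}\cap (\tilde{V}_{0,\lambda}\times\tilde{V}_{0,\lambda})$. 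For a fixed point one gets $\mathcal{J}_{\mcD}=\mathcal{J}_{\mcD}^{(1)}$, a preserved $\mcG$-relation, contradicting the hypothesis. Hence $\mcD\in\mcC_{\mcG}$.

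For \emph{uniqueness}, my plan is to use the Hilbert projective (Birkhoff) metric $d_H$ on the interior of the cone $\mcC_{\mcG}$, defined via $\mcD_1\leq\alpha\mcD_2$, $\mcD_2\leq\beta\mcD_1$. The map $\Lambda_{\bm{r}}$ is monotone and $1$-homogeneous on $\mcC_{\mcG}$ (a standard consequence of the variational definition), hence $d_H$-non-expansive by Birkhoff's theorem. Suppose $\mcD_1,\mcD_2\in\mcC_{\mcG}$ are two fixed rays with $d_H(\mcD_1,\mcD_2)>0$, and choose the optimal constants $\alpha,\beta$. The set $\{f:\alpha\mcD_2(f)-\mcD_1(f)=0\}$ is nontrivial and yields a $\mcG$-invariant linear subspace that, after tracing down through one level of $\Lambda_{\bm{r}}$ and using $\Lambda_{\bm{r}}\mcD_i=C\mcD_i$, encodes an equivalence relation on $\tilde{V}_{0,\lambda}$ that one can show is equal to its own decimation, i.e.\ a preserved $\mcG$-relation. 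By the hypothesis it must be trivial, contradicting $d_H(\mcD_1,\mcD_2)>0$. Finally, the uniqueness of $C$ follows because any two normalized fixed points give the same eigenvalue.

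The main obstacle will be Step 3, the uniqueness argument: translating the strict decrease of the Birkhoff seminorm under $\Lambda_{\bm{r}}$ into the combinatorial condition on preserved $\mcG$-relations. Concretely, one must analyze carefully how the equality set in $\alpha\mcD_2\geq\mcD_1$ is transformed by the one-step decimation (passing from $\tilde{V}_{1,\lambda}$ back to $\tilde{V}_{0,\lambda}$), and show that the only way equality can persist is through an equivalence relation that is its own lift and descent, i.e.\ a preserved $\mcG$-relation in the sense of Definition \ref{def41}. Everything else (the fixed point theorem, homogeneity, monotonicity of $\Lambda_{\bm{r}}$, reduction to the projective setting) is relatively standard once the framework is in place.
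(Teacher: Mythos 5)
First, a point of comparison: the paper does not prove Theorem \ref{thm42} at all --- it is imported verbatim from Sabot's work \cite{Sabot} --- so your proposal has to stand on its own as a proof of Sabot's theorem rather than be measured against an internal argument. Your framework (the cone of $\mathcal{G}$-symmetric forms, projectivization, a fixed point theorem for existence, Hilbert's projective metric for uniqueness, and the dictionary between degenerate forms and equivalence relations) is indeed the standard one, essentially Metz's and Sabot's setting, and the core of your existence reduction is sound: for a form with nonnegative conductances, the null space of $\Lambda_{\bm{r}}\mcD$ consists of restrictions to $\tilde{V}_{0,\lambda}$ of functions constant on the connected components of the level-one network, so $\mathcal{J}_{\Lambda_{\bm{r}}\mcD}=\mathcal{J}_{\mcD}^{(1)}\cap(\tilde{V}_{0,\lambda}\times\tilde{V}_{0,\lambda})$ and a degenerate eigenform yields a non-trivial preserved $\mathcal{G}$-relation. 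Two caveats remain even here: (i) $\Lambda_{\bm{r}}$ may send a nonzero boundary form to $0$ (this happens precisely when no two points of $\tilde{V}_{0,\lambda}$ are joined in the level-one network of the support of $\mcD$), so the projectivized map is not defined on the whole normalized cone and Brouwer cannot be applied as stated; you need a perturbation of $\Lambda_{\bm{r}}$ or an argument that this set is avoided; (ii) continuity of $\Lambda_{\bm{r}}$ up to the boundary of the cone needs justification (it holds because $\Lambda_{\bm{r}}\mcD(f)$ is a finite concave function of the conductance vector on a polyhedral cone, but it is not automatic).

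The serious gap is in the uniqueness step, and it is not merely a matter of ``carefully analyzing the equality set.'' Your dictionary between degeneracy and equivalence relations relies on the form lying in the Dirichlet cone: only for $\frac{1}{2}\sum_{x\neq y}c_{x,y}\big(f(x)-f(y)\big)^2$ with $c_{x,y}\geq 0$ is the null space the set of functions constant on the connected components of the graph $\{c_{x,y}>0\}$, which is what lets you read off an equivalence relation. The form $\alpha\mcD_2-\mcD_1$ at the optimal $\alpha$ is nonnegative, but its conductances $\alpha c_{x,y,2}-c_{x,y,1}$ are in general of both signs, so its null space is just some $\mathcal{G}$-invariant linear subspace of $l(\tilde{V}_{0,\lambda})$ with no combinatorial structure; there is no a priori equivalence relation attached to it, hence nothing to test against the preserved-relation hypothesis. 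Since monotonicity and homogeneity give only non-expansiveness of $\Lambda_{\bm{r}}$ in the Hilbert metric, never strict contraction, the entire burden of uniqueness falls on exactly this equality analysis --- which is the hard core of Sabot's paper (carried out there through a study of the renormalization map on the $\mathcal{G}$-invariant faces of the cone and of its differential at a fixed point, not through the null space of the difference of two eigenforms). As written, your Step 3 assumes the conclusion of the difficult part; the honest options are to cite \cite{Sabot}, as the paper does, or to supply the equality-case analysis in full.
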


It is in general hard to find all the preserved $\mathcal{G}$-relations. One useful idea is to look at $\tilde{V}_{m,\lambda}$, which reflects more information about the fractal than $\tilde{V}_{1,\lambda}$. See \cite{CHQES} for an application of this idea on a class of Julia sets. 

Let $G=(V,E)$ and $G'=(V',E')$ be two finite graphs, and $f: V\to V'$. We define $f(G)=(f(V),f(E))$ to be the graph with vertices $f(V)$ and edges \[{f(E)=\big\{\{f(x),f(y)\}:\{x,y\}\in E,f(x)\neq f(y)\big\}.}\]
We also define $G\cup G'$ {to be the} graph with vertices $V\cup V'$ and edges $E\cup E'$. We can extend the definition of $\mathcal{J}^{(1)}$ to $\mathcal{J}^{(k)}$ for $k\geq1$.

\begin{definition}\label{def43}
	Let $\mathcal J$ be an equivalence relation on $\tilde{V}_{0,\lambda}$.
	
	(a). Define $G_{\mathcal J}=(\tilde{V}_{0,\lambda},E_{\mathcal J})$ to be the graph with vertices $\tilde{V}_{0,\lambda}$ and edges
	\[E_{\mathcal J}=\big\{\{x,y\}\subset \tilde{V}_{0,\lambda}\times \tilde{V}_{0,\lambda}: x\neq y, x\mathcal{J} y\big\}.\]
	
	(b). For $k\geq 0$, we define the graph $G_{\mathcal J}^{(k)}(=(\tilde{V}_{k,\lambda},E_{\mathcal J}^{(k)}))=\bigcup_{w\in W_k}F_{w,\lambda}(G_{\mathcal J})$. 
	
	(c). We define an equivalence relation ${\mathcal J}^{(k)}$ on $\tilde{V}_{k,\lambda}$ by
	\[x{\mathcal J}^{(k)}y\Longleftrightarrow {x \text{ and }y\text{ belong to the same connected component of $G^{(k)}_{\mathcal J}$}}.\]
\end{definition}

The definitions of $G_\mathcal{J}$ and $G^{(k)}_{\mathcal{J}}$ acctually depend on $\lambda\in \mathbb{D}$. Since we always fix a $\lambda$ in this section, we omit the index `$\lambda$' for convenience.

It is not had to show that when $k=1$, Definition \ref{def43} (c) agrees with Definition \ref{def41} (a). In addition, it is not hard to prove the following lemma. See Lemma 3.10 in \cite{CHQES} for details.

\begin{lemma}\label{lemma44}
	Let $k\geq 1$ and $\mathcal{J}$ be a preserved $\mathcal{G}$-relation. We have
	\[x\mathcal{J} y\Longleftrightarrow x\mathcal{J}^{(k)}y,\qquad\forall x,y\in \tilde{V}_{0,\lambda}.\]
\end{lemma}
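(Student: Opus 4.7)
My approach is induction on $k$. The base case $k=1$ is the defining property of a preserved $\mathcal{G}$-relation, once one observes that the two notions of $\mathcal{J}^{(1)}$---from Definition \ref{def41}(a) and from Definition \ref{def43}(c)---coincide: the smallest equivalence relation on $\tilde{V}_{1,\lambda}$ generated by the pairs $\{F_{i,\lambda}(x),F_{i,\lambda}(y)\}$ with $x\mathcal{J}y$ has equivalence classes exactly equal to the connected components of the graph $G^{(1)}_{\mathcal{J}}$ whose edges are those pairs.

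For the inductive step from $k-1$ to $k$, the key tool is the self-similar decomposition
\[G^{(k)}_{\mathcal{J}}=\bigcup_{i=1}^{4}F_{i,\lambda}\bigl(G^{(k-1)}_{\mathcal{J}}\bigr).\]
For the direction $x\mathcal{J}y\Rightarrow x\mathcal{J}^{(k)}y$, the preserved property first produces a path $x=z_0,z_1,\ldots,z_n=y$ in $G^{(1)}_{\mathcal{J}}$ with each edge of the form $\{F_{i_j,\lambda}(a_j),F_{i_j,\lambda}(b_j)\}$ for some $a_j,b_j\in\tilde{V}_{0,\lambda}$ satisfying $a_j\mathcal{J}b_j$. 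The induction hypothesis then upgrades $a_j\mathcal{J}b_j$ to $a_j\mathcal{J}^{(k-1)}b_j$, so there is a path in $G^{(k-1)}_{\mathcal{J}}$ from $a_j$ to $b_j$; its $F_{i_j,\lambda}$-image is a path in $G^{(k)}_{\mathcal{J}}$ from $z_j$ to $z_{j+1}$, and concatenating yields the desired path from $x$ to $y$.

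The converse $x\mathcal{J}^{(k)}y\Rightarrow x\mathcal{J}y$ reverses this reasoning: decompose a given path in $G^{(k)}_{\mathcal{J}}$ into maximal subpaths, each contained in a single $F_{i,\lambda}(G^{(k-1)}_{\mathcal{J}})$. What I expect to be the main obstacle is the geometric observation that the subgraphs $F_{i,\lambda}(G^{(k-1)}_{\mathcal{J}})$ and $F_{j,\lambda}(G^{(k-1)}_{\mathcal{J}})$ for $i\neq j$ can only share vertices lying in $F_{i,\lambda}(\tilde{V}_{0,\lambda})\cap F_{j,\lambda}(\tilde{V}_{0,\lambda})$. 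This follows from the very definition of $\tilde{V}_{0,\lambda}$, which collects the $F_{w,\lambda}^{-1}$-pullbacks of the intersection points between cells, together with the finite cell structure available for $\lambda\in\mathbb{D}$. Granted this, each subpath runs from $F_{i,\lambda}(a)$ to $F_{i,\lambda}(b)$ with $a,b\in\tilde{V}_{0,\lambda}$; pulling back by $F_{i,\lambda}^{-1}$ gives a path in $G^{(k-1)}_{\mathcal{J}}$, so $a\mathcal{J}^{(k-1)}b$, and the induction hypothesis delivers $a\mathcal{J}b$, i.e.\ $\{F_{i,\lambda}(a),F_{i,\lambda}(b)\}$ is an edge of $G^{(1)}_{\mathcal{J}}$. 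Collapsing each subpath to such an edge produces a path in $G^{(1)}_{\mathcal{J}}$ from $x$ to $y$, so $x\mathcal{J}^{(1)}y$, and the preserved $\mathcal{G}$-relation property finally gives $x\mathcal{J}y$.
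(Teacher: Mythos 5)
Your proof is correct and is the standard argument; the paper itself omits the proof entirely, deferring to Lemma 3.10 of \cite{CHQES}, and your induction on $k$ via the decomposition $G^{(k)}_{\mathcal{J}}=\bigcup_{i=1}^4F_{i,\lambda}(G^{(k-1)}_{\mathcal{J}})$ is exactly the natural way to fill in those details. The one point worth making explicit is the nesting $\tilde{V}_{0,\lambda}\cap F_{i,\lambda}\AG_\lambda\subset F_{i,\lambda}(\tilde{V}_{0,\lambda})$ (which follows from the definition of $\tilde{V}_{0,\lambda}$ together with $V_{m,\lambda}\subset V_{m+1,\lambda}$), since you need it not only at the junctions between consecutive subpaths but also at the endpoints $x$ and $y$ themselves.
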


\begin{proposition}\label{prop45}
	There is no non-trivial preserved $\mathcal{G}$-relation on $\tilde{V}_{0,\lambda}$ for $\lambda\in \mathbb{D}\cap (0,\frac{1}{2})$.  
\end{proposition}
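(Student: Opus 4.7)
\medskip

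\noindent\textbf{Proof proposal.} The plan is to argue by contradiction: suppose $\mathcal{J}$ is a preserved $\mathcal{G}$-relation on $\tilde{V}_{0,\lambda}$ and there exist distinct $x_0,y_0\in \tilde{V}_{0,\lambda}$ with $x_0\mathcal{J}y_0$; I will show that the forced chain of identifications fills out all of $\tilde{V}_{0,\lambda}$, so $\mathcal{J}=1$. Since $\lambda\in\mathbb{D}$, the set $\tilde{V}_{0,\lambda}$ is finite, which lets us work with concrete graphs. The key leverage is Lemma \ref{lemma44}: for any $k\ge 1$, two points of $\tilde{V}_{0,\lambda}$ are $\mathcal{J}$-equivalent \emph{if and only if} they lie in the same connected component of $G^{(k)}_{\mathcal{J}}$. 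So to obtain a contradiction I only need to find some $k$ at which the connected component of $x_0$ in $G^{(k)}_{\mathcal{J}}$ (intersected with $\tilde{V}_{0,\lambda}$) strictly enlarges the $\mathcal{J}$-class of $x_0$.

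First I would classify the $\mathcal{G}$-orbits in $\tilde V_{0,\lambda}$: the orbit $V_0=\{p_1,p_2,p_3\}$ together with the orbits of points on $\partial\blacktriangle\setminus V_0$ coming from the contacts between $F_{4,\lambda}\AG_\lambda$ and the cells $F_{i,\lambda}\AG_\lambda$, $i=1,2,3$. By $\mathcal{G}$-invariance of $\mathcal{J}$, the class of $x_0$ is a union of full orbits (or matches orbits in a $\mathcal{G}$-compatible manner), so after applying a rotation I may assume $x_0\mathcal{J}y_0$ with $x_0,y_0$ in a convenient position relative to the cell $F_{4,\lambda}\AG_\lambda$. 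Next I build $G_{\mathcal{J}}^{(1)}$: the edge $\{x_0,y_0\}$ and its $\mathcal{G}$-translates produce, under each $F_{i,\lambda}$, copies of this edge in $F_{i,\lambda}\tilde{V}_{0,\lambda}$. Because junction vertices between adjacent level-$1$ cells lie in $\tilde V_{1,\lambda}$, these copies get glued at the junctions, creating paths in $G^{(1)}_{\mathcal{J}}$. I would pass to $G^{(k)}_{\mathcal{J}}$ and exploit two features of the added rotated triangle: (a) the map $F_{4,\lambda}$ is a rotation (not a translation), so it sends the $\mathcal{G}$-orbits inside $\tilde V_{0,\lambda}$ to positions that \emph{break} the rotational symmetry on $F_{4,\lambda}\tilde V_{0,\lambda}$, and (b) the contact points of $F_{4,\lambda}\AG_\lambda$ with $F_{1,\lambda}\AG_\lambda$, $F_{2,\lambda}\AG_\lambda$, $F_{3,\lambda}\AG_\lambda$ force gluings between \emph{different} orbits of $\tilde V_{0,\lambda}$ when we take preimages. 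This should produce, via $\mathcal{J}^{(k)}=\mathcal{J}$, new equivalences in $\tilde V_{0,\lambda}$ unless $\mathcal{J}$ already identifies everything; iterating then gives $\mathcal{J}=1$.

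The main obstacle is making the last step uniform in $\lambda\in\mathbb{D}$, since $\tilde V_{0,\lambda}$ and its combinatorics genuinely depend on the dyadic denominator of $\lambda$. I expect to handle this by a case split on the $\mathcal{G}$-orbit type of the pair $(x_0,y_0)$: either both lie in the orbit $V_0$, or both lie in a single non-$V_0$ orbit, or they lie in two different orbits. In each case the rotated structure of $F_{4,\lambda}$ and the $A_3$-action on $V_0$ force enough new identifications at level $1$ (or at a bounded level $k$) to either collapse $\tilde V_{0,\lambda}$ onto a single class or contradict $\mathcal{J}^{(1)}=\mathcal{J}$. Throughout I would mimic the strategy of Lemma~3.10 and the subsequent analysis in \cite{CHQES}, adapted to the specific geometry of the added rotated triangle.
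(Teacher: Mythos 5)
Your logical skeleton (use Lemma \ref{lemma44} to pass to the graphs $G^{(k)}_{\mathcal J}$, and show that a single nontrivial identification forces $\mathcal J=1$) is consistent with the paper, but the engine that makes this work is missing, and the place where you defer it (``this should produce new equivalences\dots I expect to handle this by a case split'') is precisely the whole content of the proposition. The paper's proof turns on a dichotomy you never isolate: since $\mathcal G$ acts as $A_3$ on $V_0$, the restriction $\mathcal J|_{V_0}$ is either full or empty, and the two cases are handled by \emph{different} mechanisms. If $V_0$ is collapsed, the identifications propagate through the chain of $\{1,2,3\}$-cells: using $\tilde V_{0,\lambda}\subset\bigl(\bigcup_{w\in\{1,2,3\}^{m-1}}F_{w,\lambda}V_0\bigr)\cap\partial\blacktriangle$ for $\lambda\in\mathbb D_m$, one gets $\mathcal J^{(m-1)}$ full on that set and hence $\mathcal J=1$. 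If $V_0$ is \emph{not} collapsed, the paper does not propagate anything; it shows instead that \emph{no} identification is possible, via a blocking argument: a self-avoiding path in $G^{(m)}_{\mathcal J}$ cannot cross any cell $F_{w,\lambda}(\tilde V_{k,\lambda})$, because a cell communicates with its complement only through the three corners $F_{w,\lambda}(p_i)$, so entering and leaving through two distinct corners rescales to a $G^{(k)}_{\mathcal J}$-path joining two distinct points of $V_0$ --- contradicting the assumption on $V_0$. Hence the component of any $x$ is confined to the cells containing $x$ and $\mathcal J=0$.

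Concretely, the gap is this: when your seed pair $x_0,y_0$ lies outside $V_0$ (e.g.\ two junction points on $\partial_\downarrow\blacktriangle$), you have exhibited no mechanism by which $x_0\mathcal J y_0$ forces any new identification, let alone one inside $V_0$; and your hope that ``a bounded level $k$'' suffices is unjustified --- the level needed in the paper is $m$, which grows with the dyadic denominator of $\lambda$, so no uniform bound exists. Your point about $F_{4,\lambda}$ being a rotation that ``breaks symmetry'' also plays no role in the correct argument; what matters is finite ramification (cells meet the outside only at their three corners) together with the cost of crossing a cell. To repair the proposal you should replace the direct-propagation plan by the $V_0$ dichotomy and supply the cell-crossing/blocking argument for the empty case.
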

\begin{proof}
	Without loss of generality, we assume that $\lambda\in \mathbb{D}_m$ for some $m\geq 2$. Then,  
	\begin{equation}\label{eqn41}
	\tilde{V}_{0,\lambda}\subset \big(\bigcup_{w\in \{1,2,3\}^{m-1}}F_{w,\lambda}V_0\big)\cap \partial\blacktriangle.
	\end{equation}
	Let $\mathcal{J}$ be a preserved $\mathcal{G}-$relation on $\tilde{V}_{0,\lambda}$. We will show that either $\mathcal{J}=0$ or $\mathcal{J}=1$.\vspace{0.15cm}
	
	\noindent \textit{Case 1: $x\mathcal{J} y$ for any $x,y\in V_0$.} In this easy case, $x\mathcal{J}^{(m-1)} y$ for any $x,y\in \bigcup_{w\in \{1,2,3\}^{m-1}}F_{w,\lambda}V_0$, which implies that $x\mathcal{J} y, \forall x,y\in \tilde{V}_{0,\lambda}$ by Lemma \ref{lemma44} and (\ref{eqn41}). So $\mathcal{J}=1$.\vspace{0.15cm}
	
	\noindent \textit{Case 2: $x{\mathcal{J}\mkern-10.5mu\backslash} y$ if $x\neq y\in V_0$.} In this case, we will see that $\mathcal{J}=0$. 
	
	For a sequence $x_0,x_1,x_2,\cdots,x_N$ such that $\{x_i,x_{i+1}\}\in E_{\mathcal J}^{(n)},\forall 0\leq i\leq N-1$, we call it a $G_{\mathcal J}^{(n)}$-path, where $n\geq 0$. The path is self-avoiding if $x_i\neq x_j,\forall i\neq j\in \{0,1,\cdots,N\}$.
	
	Now, fix $x\in \tilde{V}_{0,\lambda}$, we study a self-avoiding $G_{\mathcal J}^{(m)}$-path $x_0,x_1,\cdots,x_N$ with $x_0=x$. 
	
	First, let
	\begin{equation}\label{eqn42}
	A=\bigcup_{k=0}^{m-1}\bigcup_{w\in \{1,2,3\}^k\times \{4\}}F_{w,\lambda}(\tilde{V}_{m-k-1,\lambda}\setminus V_0).
	\end{equation}
	We claim that 
	\begin{equation}\label{eqn43}
	x_N\notin A\Longrightarrow x_i\notin A,\qquad\forall 0\leq i\leq N.
	\end{equation}
	Clearly $x=x_0\notin A$. If $x_N\notin F_{w,\lambda}(\tilde{V}_{m-k-1,\lambda}\setminus V_0)$ and $x_i\in F_{w,\lambda}(\tilde{V}_{m-k-1,\lambda}\setminus V_0)$ for some $0<i<N$ and $w\in \{1,2,3\}^k\times \{4\}$, then there exist $0<i'<i$ and $i<i''<N$ such that 
	
	1). $x_{i'}\in F_{w,\lambda}V_0$ and $x_{i''}\in F_{w,\lambda}V_0$,
	
	2). for any $i'\leq j\leq i''$, we have $x_j\in F_{w,\lambda}(\tilde{V}_{m-k-1,\lambda})$.
	
	\noindent This is because only the three vertices $F_{w,\lambda}(p_i),i=1,2,3$ in $F_{w,\lambda}(\tilde{V}_{m-k-1,\lambda})$ are possibly connected to $\tilde{V}_{m,\lambda}\setminus F_{w,\lambda}(\tilde{V}_{m-k-1,\lambda})$ in the graph $G^{(m)}_\mathcal{J}$. However, 1),2) are not possible. Noticing that $x_{i'}\neq x_{i''}$ by the assumption that $x_0,x_1,\cdots,x_N$ is self-avoiding,  $y_0=F_{w,\lambda}^{-1}(x_{i'}), y_1=F_{w,\lambda}^{-1}(x_{i'+1}),\cdots, y_{i''-i'}=F_{w,\lambda}^{-1}(x_{i''})$ is a $G^{(m-k-1)}_{\mathcal{J}}$-path connecting two distinct vertices $y_0,y_{i''-i'}$ in $V_0$. By Definition \ref{def43} and Lemma \ref{lemma44}, this implies that $y_0\mathcal{J}y_{i''-i'}$, which contradicts the assumption of Case 2.
	
	Next, as a consequence of (\ref{eqn42}) and (\ref{eqn43}), we see that 
	\[x_N\notin A \Longrightarrow \{x_i\}_{i=0}^N\subset \bigcup_{w\in \{1,2,3\}^m}F_{w,\lambda}(\tilde{V}_{0,\lambda}).\]
	So, it suffices to consdier the restriction of the graph $G^{(m)}_\mathcal{J}$ to $\bigcup_{w\in \{1,2,3\}^m}F_{w,\lambda}(\tilde{V}_{0,\lambda})$, where each cell $F_{w,\lambda}(\tilde{V}_{0,\lambda}),w\in \{1,2,3\}^m$ is connected to the outside with at most three vertices $F_{w,\lambda}V_0$. By a similar argument as before, and noticing (\ref{eqn41}), we can see that
	\begin{equation}\label{eqn44}
	x_N\notin A\Longrightarrow \{x_i\}_{i=0}^N\subset \bigcup\big\{F_{w,\lambda}(\tilde{V}_{0,\lambda}):w\in \{1,2,3\}^m, x\in F_{w,\lambda}V_0\big\}.
	\end{equation}
	
	Back to the study of $\mathcal{J}$, let $y\in \tilde{V}_{0,\lambda}\setminus \{x\}$, we claim that $x{\mathcal{J}\mkern-10.5mu\backslash} y$. Otherwise, there exists a self-avoiding $G^{(m)}_\mathcal{J}$-path $x=x_0,x_1,\cdots,x_N=y$ by Definition \ref{def43} and Lemma \ref{lemma44}, which contradicts (\ref{eqn44}) since $y\notin A$ and $y\notin \bigcup\big\{F_{w,\lambda}(\tilde{V}_{0,\lambda}):w\in \{1,2,3\}^m, x\in F_{w,\lambda}V_0\big\}$ by (\ref{eqn41}). The argument applies to any $x,y\in \tilde{V}_{0,\lambda}$. So $\mathcal{J}=0$. 
\end{proof}

As a consequence, we get the existence of self-similar resistance forms on $\AG_\lambda$ for $\lambda\in \mathbb{D}$. 

\begin{theorem}\label{thm46}
	Let $\lambda\in \mathbb{D}\cap (0,\frac{1}{2})$ and $0<s<\infty$. There exists a unique $\frac{3}{5}\leq r<1$ and a unique $\mathcal{D}\in \mathcal{RF}(\tilde{V}_{0,\lambda})$ such that 
	\[
	\mcD=\Lambda_{\bm{r}}\mcD
	\]
	holds with $r_1=r_2=r_3=r$, $r_4=s$ and $\bm{r}=\{r_i\}_{i=1}^4$. In particular, Theorem \ref{thm36} holds for $\lambda\in \mathbb{D}\cap (0,\frac{1}{2})$. 
\end{theorem}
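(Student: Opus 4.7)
The plan is to combine Proposition 4.5 with Sabot's theorem (Theorem 4.2) to produce an eigenform/eigenvalue pair for every parameter choice, then isolate the unique renormalization factor $r$ making the eigenvalue equal to $1$. Since Proposition 4.5 rules out non-trivial preserved $\mathcal{G}$-relations on $\tilde V_{0,\lambda}$, Theorem 4.2 applies to $\bm r = (r,r,r,s)$ for every $r,s\in(0,\infty)$, providing, uniquely up to a scalar, a $\mathcal G$-symmetric $\mcD_{r,s}\in \mathcal{RF}(\tilde V_{0,\lambda})$ and an eigenvalue $C(r,s)>0$ with $\Lambda_{\bm r}\mcD_{r,s}=C(r,s)\mcD_{r,s}$. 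Once $r$ is chosen so that $C(r,s)=1$, the desired self-similar resistance form on $\AG_\lambda$ with factors $(r,r,r,s)$ arises from $\mcD_{r,s}$ by the standard Kigami inductive-limit procedure along $\{\tilde V_{m,\lambda}\}_{m\geq 0}$, and uniqueness of $\mcD_{r,s}$ (up to scalar) yields uniqueness in $\mathscr{E}(\lambda,s,r)$.

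Next, I reduce the search for $r$ to a one-variable problem. The homogeneity $\Lambda_{t\bm r}\mcD = t^{-1}\Lambda_{\bm r}\mcD$ gives $C(tr,ts)=t^{-1}C(r,s)$, so for each ratio $\rho>0$ the ray $\{(\rho s,s):s>0\}$ hits the level set $\{C=1\}$ in a single point, at $s=\phi(\rho):=C(\rho,1)$; hence solving $C(r,s)=1$ for $r$ given $s$ is equivalent to inverting $\phi$. I intend to show $\phi:(0,\infty)\to(0,\infty)$ is a continuous strictly decreasing bijection. Continuity follows from the continuous dependence of Sabot's eigen-problem on $\bm r$, obtained by a standard compactness argument on the finite-dimensional space of normalized $\mathcal G$-symmetric resistance forms on $\tilde V_{0,\lambda}$ combined with the uniqueness in Theorem 4.2. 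Strict monotonicity follows because, for fixed $\mcD$, the quadratic form $\Lambda_{(r,r,r,1)}\mcD$ is strictly decreasing in $r$ on non-constant functions, which transfers to $\phi$ via the variational characterization of the eigenvalue. The asymptotics $\phi(0^+)=+\infty$ and $\phi(+\infty)=0$ follow by inspecting the two degenerate regimes.

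To pin down $r\in [3/5,1)$, I compare with the standard Sierpinski gasket. Tracing the fixed-point equation $\mcD=\Lambda_{(r,r,r,s)}\mcD$ onto $V_0$ produces a scalar equation relating $r$, $s$ and the effective conductance carried by $\mcD$ across cells $1,2,3$, since $[\mcD]_{V_0}$ is $\mathcal G$-symmetric and therefore proportional to the canonical form on $V_0$. The three-cell portion of this relation is exactly the classical SG renormalization equation, whose unique fixed point is $r=3/5$; since wiring cell $4$ in parallel can only decrease effective resistance, the full four-cell equation forces $r\geq 3/5$, with equality only in the limit $s\to+\infty$. The strict upper bound $r<1$ follows from a direct effective-resistance inequality at $r=1$.

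The main obstacle in this plan is the effective-resistance analysis in the last two steps, specifically establishing strict monotonicity of $\phi$ and the sharp identification of the range $[3/5,1)$; all other ingredients---existence via Sabot's theorem, continuous dependence, the homogeneity reduction, and Kigami's inductive limit---are standard machinery once Proposition 4.5 is in place.
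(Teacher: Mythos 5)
Your proposal is correct and follows essentially the same route as the paper: Proposition 4.5 plus Sabot's theorem to get the one-parameter eigenvalue problem, continuity and monotonicity of the eigenvalue (your $\phi(\rho)=C(\rho,1)$ is exactly the paper's $\tilde r_4\mapsto \tilde r_4C(\tilde r_4)$ after the substitution $\rho=1/\tilde r_4$, by homogeneity), and comparison with the standard Sierpinski gasket form to locate $r\in[\frac{3}{5},1)$. The only cosmetic differences are the normalization of the ray and your Rayleigh-monotonicity/trace-to-$V_0$ derivation of $r\ge\frac{3}{5}$ in place of the paper's variational inequality $\sup\frac{\Lambda_{\bm{\tilde r}}\mcD}{\mcD}\ge\inf\frac{\Lambda_{\bm{\tilde r}}\mcD'}{\mcD'}$ against the SG trace $\mcD'$.
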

\begin{proof}
	Let $\tilde{\bm{r}}=\{\tilde{r}_i\}_{i=1}^4$ with $\tilde{r}_4\in (0,\infty)$ and $\tilde{r}_1=\tilde{r}_2=\tilde{r}_3=1$. Then, by Theorem \ref{thm42} and Proposition \ref{prop45}, there exists $C=C(\tilde{r}_4)>0$ and $\mcD\in \mathcal{RF}(\tilde{V}_{0,\lambda})$ such that $\Lambda_{\tilde{\bm{r}}}\mcD=C\mcD$. By setting $\bm{r}=\{r_i\}_{i=1}^4$ with $r_i=\tilde{r}_i\cdot C(\tilde{r}_4)$, $1\leq i\leq4$, we have $\Lambda_{\bm{r}}\mcD=\mcD$ holds. 
	
	We need to show $s=r_4=\tilde{r}_4\cdot C(\tilde{r}_4)$ can be any positive real number. This follows from the following two observations.\vspace{0.1cm}
	
	\noindent\textit{Observation 1.} $C(\tilde{r}_4)$ is a continuous and decreasing function of $\tilde{r}_4$. 
	
	Let $\tilde{\bm{r}}'=\{\tilde{r}'_i\}_{i=1}^4$, with $\tilde{r}'_1=\tilde{r}'_2=\tilde{r}'_3=1$ and $\tilde{r}'_4\leq \tilde{r}_4$.
	Then we have $\tilde{r}'_4\Lambda_{\tilde{\bm{r}}'}\mcD(f)\leq \tilde{r}_4\Lambda_{\tilde{\bm{r}}}\mcD(f)$ and $\Lambda_{\tilde{\bm{r}}}\mcD(f)\leq \Lambda_{\tilde{\bm{r}}'}\mcD(f)$. In addition, we have
	\begin{equation}\label{eqn46}
	\inf\frac{\Lambda_{\bm{\tilde{r}'}}\mcD(f)}{\mcD(f)}\leq C(\tilde{r}'_4)\leq \sup\frac{\Lambda_{\bm{\tilde{r}'}}\mcD(f)}{\mcD(f)},
	\end{equation}
	where the infimum and supremum are taken over $f\in l(\tilde{V}_{0,\lambda})\setminus Constants$. See \cite{M1,Sabot} for a proof of (\ref{eqn46}). It follows that $C(\tilde{r}_4)\leq C(\tilde{r}'_4)\leq C(\tilde{r}_4)\frac{\tilde{r}_4}{\tilde{r}'_4}$.
	
	\noindent\textit{Observation 2.} $\frac{3}{5}\leq C(\tilde{r}_4)<1$. 
	
	The fact $C(\tilde{r}_4)<1$ is a consequence of Proposition 3.1.8 of \cite{ki3}. For the remaining half, we let $\mcD'$ be the trace of the standard form $(\mcE,\mcF)$ on the Sierpinski gasket on $\tilde{V}_{0,\lambda}$, then we have 
	\[C(\tilde{r}_4)=\sup\frac{\Lambda_{\bm{\tilde{r}}}\mcD(f)}{\mcD(f)}\geq \inf\frac{\Lambda_{\bm{\tilde{r}}}\mcD'(f)}{\mcD'(f)}\geq \inf\frac{\Lambda_{\bm{\tilde{r}'}}\mcD'(f)}{\mcD'(f)}=\frac{3}{5},\]
	where $\bm{\tilde{r}'}=\{\tilde{r}'_i\}_{i=1}^4$ with $\tilde{r}'_1=\tilde{r}'_2=\tilde{r}'_2=1$ and $\tilde{r}'_4=+\infty$. See the book \cite{s} for the standard form on the standard Sierpinski gasket.
	
	It remains to show the uniqueness of $r$ for fixed $r_4=s$. Assume we have two solutions $r>r'$, and let $\bm{r}=(r,r,r,s)$, $\bm{r}'=(r',r',r',s)$, $\Lambda_{\bm{r}}\mcD=\mcD$ and $\Lambda_{\bm{r}'}\mcD'=\mcD'$. Then
	\[\frac{\mcD(f)}{\Lambda_{\bm{r'}}\mcD(f)}<\frac{\mcD(f)}{\Lambda_{\bm{r}}\mcD(f)}=1,\quad \forall f\in l(\tilde{V}_{0,\lambda})\setminus Constants.\]
	By taking the supremum over the compact set $\{f\in l(\tilde{V}_{0,\lambda}):\mcD(f)=1\}$, we get the contradiction $1=\inf\frac{\mcD'(f)}{\Lambda_{\bm{r'}}\mcD'(f)}\leq \sup\frac{\mcD(f)}{\Lambda_{\bm{r'}}\mcD(f)}<1$.  
\end{proof}

\noindent\textbf{Remark.} For $s\geq 1$ cases, we also have resistance forms on the closure of $V_{*,\lambda}$ with respect to the resistance metric, which contains $\bigcup_{w\in W_*}F_{w,\lambda}\partial\blacktriangle$.

\section{Resistance estimates}
In this section, we provide some esimtates. To see the common properties of $\AG_\lambda, \lambda\in \mathbb{D}\cap (0,\frac{1}{2})$, we return to the graphs $G_{m,\lambda}$.  We focus on $0<s<1$ cases, and some results (Proposition \ref{prop54} and \ref{prop56}) in this section extend to general cases $0<s<\infty$. 

First, we introduce some notations. 

\begin{definition}\label{def52} For $\lambda\in \mathbb{D}\cap (0,\frac{1}{2})$ and $0<s<1$, noticing that Theorem \ref{thm36} holds, we take the same notations $\frac{3}{5}\leq r(\lambda,s)<1$, $(\mathcal{E}_{\lambda,s},\mathcal{F}_{\lambda,s})\in \mathscr{E}(\lambda,s,r(\lambda,s))$ and $R_{\lambda,s}(x,y)$ as in Definition \ref{def37}. For $m\geq 1$, define $\mcE^{(m)}_{\lambda,s}=[\mcE_{\lambda,s}]_{V_{m,\lambda}}$, which has the form 
	\[\mcE^{(m)}_{\lambda,s}(f)=\sum_{\{x,y\}\in E_{m,\lambda}}c^{(m)}_{x,y}\big(f(x)-f(y)\big)^2,\]
	where $c^{(m)}_{x,y}\in [0,\infty)$ are constants depending on $\lambda,s$. We also use the notation $c^{(m)}_{x,y}(\lambda,s)=c^{(m)}_{x,y}$ to indicate its dependence on $\lambda,s$.
\end{definition}

In particular, we always have 
$\mcE^{(0)}_{\lambda,s}(f)=\frac{1}{2}\sum_{i\neq j}\big(f(p_i)-f(p_j)\big)^2$
by the choice of $(\mathcal{E}_{\lambda,s},\mathcal{F}_{\lambda,s})$. 

For convenience, in this section, we will write $r_1=r_2=r_3=r(\lambda,s)$, $r_4=s$ and $r_w=r_{w_1}r_{w_2}\cdots r_{w_m}$ for $w=w_1w_2\cdots w_m\in W_m,m\geq 0$, when $\lambda,s$ are fixed.

\subsection{Resistance estimates on $\partial\blacktriangle$}  We will frequently use the harmonic extension of a function on a set. We briefly explain our setting here:

Let $A=\bigcup_{w\in \Lambda}F_{w,\lambda}\AG_\lambda$, where $\Lambda\subset W_m,m\geq 0$. We consider $f\in C(B)$, where $B\subset A$, such that 
$\inf\{\sum_{w\in \Lambda}r_w^{-1}\mcE_{\lambda,s}(g\circ F_{w,\lambda}):g|_B=f,g\in C(A)\}<\infty$.  
We say $h\in C(A)$ is the harmonic extension of $f$ on $A$ if $h|_B=f$ and 
\[\sum_{w\in \Lambda}r_w^{-1}\mcE_{\lambda,s}(h\circ F_{w,\lambda})=\inf\{\sum_{w\in \Lambda}r_w^{-1}\mcE_{\lambda,s}(g\circ F_{w,\lambda}):g|_B=f,g\in C(A)\}.\]
The harmonic extension $h$ is unique.

Our first lemma concerns the effective resistance between a pair of disjoint sets $A,A'$: 
\[R_{\lambda,s}(A,A')=\sup\{\frac{|f(x)-f(y)|^2}{\mcE_{\lambda,s}(f)}:f\in \mcF_{\lambda,s}, f|_A=1,f|_{A'}=0\}. \] 

\begin{lemma}\label{lemma53}
	$R_{\lambda,s}(p_1,\partial_\downarrow\blacktriangle)\geq \frac{1}{2}\cdot\frac{s\cdot r(\lambda,s)}{s+r(\lambda,s)}$ for $\lambda\in \mathbb{D}\cap (0,\frac{1}{2})$ and $0<s<1$. 
\end{lemma}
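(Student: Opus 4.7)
The plan is to construct an explicit admissible test function $f\in\mcF_{\lambda,s}$ with $f(p_1)=1$ and $f|_{\partial_\downarrow\blacktriangle}=0$ whose energy is at most $2\bigl(r(\lambda,s)^{-1}+s^{-1}\bigr)$; the estimate $R_{\lambda,s}(p_1,\partial_\downarrow\blacktriangle)\geq 1/\mcE_{\lambda,s}(f)$ will then yield the desired lower bound $\tfrac{1}{2}\cdot\tfrac{s\,r(\lambda,s)}{s+r(\lambda,s)}$. The construction exploits the fact (easily verified from the explicit coordinates of the $F_{i,\lambda}$) that the four cells $F_{i,\lambda}\AG_\lambda$ meet pairwise only at single points; in particular $F_{1,\lambda}\AG_\lambda\cap F_{4,\lambda}\AG_\lambda=\{F_{4,\lambda}(p_1)\}$, and the point $x_\lambda:=F_{1,\lambda}^{-1}(F_{4,\lambda}(p_1))=(2\lambda,0)$ lies on $\partial_\downarrow\blacktriangle\cap\AG_\lambda$.

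Let $h$ be the $\mcE_{\lambda,s}$-harmonic function on $\AG_\lambda$ with $h(p_1)=1$, $h(p_2)=h(p_3)=0$; axiom (RF5) applied to $(h\vee 0)\wedge 1$ (which shares the same boundary values) forces $h\in[0,1]$ on $\AG_\lambda$, so $\gamma:=h(x_\lambda)\in[0,1]$. Let $h_\gamma$ be the harmonic function with boundary values $(\gamma,0,0)$, and define $f$ piecewise by $f=h\circ F_{1,\lambda}^{-1}$ on $F_{1,\lambda}\AG_\lambda$, $f=h_\gamma\circ F_{4,\lambda}^{-1}$ on $F_{4,\lambda}\AG_\lambda$, and $f\equiv 0$ on $F_{2,\lambda}\AG_\lambda\cup F_{3,\lambda}\AG_\lambda$. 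Continuity is easy to check: at the meeting points $F_{1,\lambda}(p_j)=F_{j,\lambda}(p_1)$ and $F_{4,\lambda}(p_j)$ for $j=2,3$ both sides give $0$, and at the single delicate point $F_{4,\lambda}(p_1)$ the $F_1$-side gives $h(x_\lambda)=\gamma$ while the $F_4$-side gives $h_\gamma(p_1)=\gamma$, matching precisely because of our choice of $\gamma$. Hence $f\in C(\AG_\lambda)\cap\mcF_{\lambda,s}$ with $f(p_1)=1$, and since $\partial_\downarrow\blacktriangle\subset F_{2,\lambda}\AG_\lambda\cup F_{3,\lambda}\AG_\lambda$ we also have $f|_{\partial_\downarrow\blacktriangle}=0$.

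Finally, by self-similarity of $\mcE_{\lambda,s}$ together with the normalization $\mcE^{(0)}_{\lambda,s}(a,b,c)=(a-b)^2+(a-c)^2+(b-c)^2$, we have $\mcE_{\lambda,s}(h)=\mcE^{(0)}_{\lambda,s}(1,0,0)=2$ and $\mcE_{\lambda,s}(h_\gamma)=\mcE^{(0)}_{\lambda,s}(\gamma,0,0)=2\gamma^2$, so
\[
\mcE_{\lambda,s}(f)=r(\lambda,s)^{-1}\mcE_{\lambda,s}(h)+s^{-1}\mcE_{\lambda,s}(h_\gamma)=2r(\lambda,s)^{-1}+2\gamma^2 s^{-1}\leq 2\bigl(r(\lambda,s)^{-1}+s^{-1}\bigr),
\]
and inverting completes the proof. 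The only real (and very minor) obstacle is the continuity check at $F_{4,\lambda}(p_1)$, which dictates the choice $\gamma=h(x_\lambda)$; once this compatibility is arranged, the energy estimate and the conclusion are immediate.
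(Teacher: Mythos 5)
Your construction is correct and is in essence the very function the paper builds: the paper's $u$ is obtained by harmonically extending the data ($1$ at $p_1$, $0$ on $F_{2,\lambda}\AG_\lambda\cup F_{3,\lambda}\AG_\lambda$) first over $\bigcup_{i=1}^3F_{i,\lambda}\AG_\lambda$ and then over $F_{4,\lambda}\AG_\lambda$, which yields exactly your piecewise-harmonic $f$ with $\gamma=h(x_\lambda)$ on the added cell, and the same bound $\mcE_{\lambda,s}(f)\leq 2r(\lambda,s)^{-1}+2s^{-1}$. Your explicit continuity check at $F_{4,\lambda}(p_1)$ just makes visible what the paper's "harmonic extension" phrasing handles implicitly, so this is the same proof.
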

\begin{proof}
	Define $u''\in C(F_{2,\lambda}\AG_\lambda\cup F_{3,\lambda}\AG_\lambda\cup\{p_1\})$ to be
	\[u''(x)=\begin{cases}
	1,\quad\text{ if }x=p_1,\\
	0,\quad\text{ if }x\in F_{2,\lambda}\AG_\lambda\cup F_{3,\lambda}\AG_\lambda.
	\end{cases}\]
	Then, let $u'$ be the harmonic extension of $u''$ on $\bigcup_{i=1}^3F_{i,\lambda}\AG_\lambda$. Finally, we take the harmonic extension of $u'$ on $\AG_\lambda$ to get $u\in \mathcal{F}_{\lambda,s}$. By the construction, 
	\[
	\mcE_{\lambda,s}(u\circ F_{i,\lambda})
	\begin{cases}
	=2,\quad\text{ if }i=1,\\
	=0,\quad\text{ if }i=2,3,\\
	\leq 2,\quad\text{ if }i=4.
	\end{cases}
	\]  
	As a consequence,
	$\mcE_{\lambda,s}(u)=\sum_{i=1}^4r_i^{-1}\mcE(u\circ F_{i,\lambda})\leq 2s^{-1}+2r(\lambda,s)^{-1}.$
\end{proof}

Using Lemma \ref{lemma53}, we can find a lower bound estimate for the resistance metric for $x,y\in \partial\blacktriangle$, and the upper bound estimate follows from a routine argument.

\begin{proposition}\label{prop54}
	There exist constants $0<c_1,c_2<\infty$ such that for any $\lambda\in \mathbb{D}\cap (0,\frac{1}{2})$, $0<s<1$ and $x,y\in \partial\blacktriangle$,  
	\[c_1\cdot s\cdot d(x,y)^{\theta(\lambda,s)}\leq R_{\lambda,s}\big(x,y\big)\leq c_2\cdot\frac{1}{1-r(\lambda,s)}\cdot d(x,y)^{\theta(\lambda,s)},\]
	where $\theta(\lambda,s)=-\frac{\log r(\lambda,s)}{\log 2}$.
\end{proposition}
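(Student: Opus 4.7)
The plan is to prove the two inequalities by separate techniques: iterated triangle inequality plus self-similar cell bounds for the upper estimate, and Rayleigh shorting plus a Lemma~\ref{lemma53}-style construction for the lower estimate.

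\textbf{Upper bound.} Given $x,y\in\partial\blacktriangle$ with $d(x,y)=\delta>0$, pick $m\geq 0$ with $2^{-m-1}\leq \delta\leq 2^{-m}$. For each $k\geq m$, choose $\tilde{x}_k\in V_{k,\lambda}\cap \partial\blacktriangle$ within distance $2^{-k}$ of $x$ such that $\tilde{x}_k,\tilde{x}_{k+1}$ lie in a common level-$k$ cell $F_{w,\lambda}\blacktriangle$ with $w\in\{1,2,3\}^k$; do the same for $y$. By Rayleigh's monotonicity (the cell is a sub-network of $\AG_\lambda$) and the self-similar scaling, $R_{\lambda,s}(\tilde{x}_k,\tilde{x}_{k+1})\leq r^k R_{\lambda,s}(F_{w,\lambda}^{-1}\tilde{x}_k,F_{w,\lambda}^{-1}\tilde{x}_{k+1})\leq Cr^k$, where the last bound uses that the normalization $R_{\lambda,s}(p_i,p_j)=2/3$ forces $\mathrm{diam}_{R_{\lambda,s}}(\AG_\lambda)\leq C$. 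Summing,
\[
R_{\lambda,s}(x,\tilde{x}_m)\leq \sum_{k\geq m}Cr^k=\frac{Cr^m}{1-r(\lambda,s)},
\]
and analogously for $y$. Moreover $\tilde{x}_m,\tilde{y}_m$ are separated by $O(1)$ adjacent level-$m$ boundary cells, so $R_{\lambda,s}(\tilde{x}_m,\tilde{y}_m)\leq Cr^m$. The metric triangle inequality and $r^m=2^{-m\theta(\lambda,s)}\asymp d(x,y)^{\theta(\lambda,s)}$ give the desired upper bound.

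\textbf{Lower bound.} Let $M$ be the largest integer such that some $w\in\{1,2,3\}^M$ has both $x,y\in F_{w,\lambda}\blacktriangle$; minimality at level $M+1$ forces $d(x,y)\leq 2\cdot 2^{-M}$, hence $r^M\geq c\,d(x,y)^{\theta(\lambda,s)}$. At level $M+1$, $x\in F_{w\cdot i,\lambda}\blacktriangle$ and $y\in F_{w\cdot j,\lambda}\blacktriangle$ for some distinct $i,j\in\{1,2,3\}$ (as $x,y\in\partial\blacktriangle$). Set $\tilde{x}=F_{w,\lambda}^{-1}x\in F_{i,\lambda}\AG_\lambda\cap\partial\blacktriangle$, $\tilde{y}=F_{w,\lambda}^{-1}y\in F_{j,\lambda}\AG_\lambda\cap\partial\blacktriangle$. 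Now short the three vertices of $F_{w,\lambda}V_0$ in $\AG_\lambda$ to a single node: since $F_{w,\lambda}\AG_\lambda$ meets the rest of $\AG_\lambda$ only along $F_{w,\lambda}V_0$, and both $x,y$ sit inside this cell, any current from $x$ to $y$ in the shorted network stays inside the cell with boundary identified. Rayleigh's monotonicity and the self-similar cell isomorphism then yield
\[
R_{\lambda,s}(x,y)\geq R_{\lambda,s}^{\mathrm{short}}(x,y)=r^M\cdot \widetilde{R}(\tilde{x},\tilde{y}),
\]
where $\widetilde{R}(\tilde{x},\tilde{y})=1/\inf\{\mcE_{\lambda,s}(g): g(\tilde{x})=1,\ g(\tilde{y})=0,\ g|_{V_0}\equiv 0\}$.

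The key remaining estimate is a uniform bound $\widetilde{R}(\tilde{x},\tilde{y})\geq cs$ with $c$ independent of $\lambda,s$, which I would obtain by adapting the Lemma~\ref{lemma53} construction: produce a test function $g\in\mcF_{\lambda,s}$ equal to $1$ at $\tilde{x}$, $0$ on $V_0\cup\{\tilde{y}\}$, and taken as the harmonic extension of suitable boundary values (all in $[0,1]$) on each of $F_{1,\lambda}\AG_\lambda,F_{2,\lambda}\AG_\lambda,F_{3,\lambda}\AG_\lambda,F_{4,\lambda}\AG_\lambda$. The energy on the three main sub-cells is bounded by $C/r\leq C'$ (using $r\geq 3/5$), and the added-triangle contribution by $C/s$; hence $\mcE_{\lambda,s}(g)\leq C''/s$, giving $\widetilde{R}(\tilde{x},\tilde{y})\geq s/C''$. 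Combining with $r^M\geq c\,d(x,y)^{\theta(\lambda,s)}$ completes the proof.

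\textbf{Main obstacle.} The genuinely delicate step is the uniform lower bound $\widetilde{R}(\tilde{x},\tilde{y})\geq cs$, independent of $\lambda$, $s$, and the precise positions of $\tilde{x},\tilde{y}$. Lemma~\ref{lemma53} handles one configuration; the general case requires either a systematic family of explicit competitors or a finite case analysis exploiting the $\mathcal{G}$-symmetry, which reduces the possibilities essentially to one per unordered pair $\{i,j\}\subset\{1,2,3\}$. One must ensure the construction never produces boundary values at $F_{4,\lambda}V_0$ outside $[0,1]$, so that the $1/s$ factor from the added triangle appears only once.
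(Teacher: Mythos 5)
Your upper bound is essentially the paper's argument (telescoping the resistance along the addresses of $x$ and $y$ down to level $m\approx -\log_2 d(x,y)$, then crossing between the two level-$m$ vertices in a bounded number of hops), and it is correct; the only cosmetic issue is that you do not need, and should not invoke, a uniform bound on $\diam_{R_{\lambda,s}}(\AG_\lambda)$ (which degenerates as $s\to 1$) --- the points $F_{w,\lambda}^{-1}\tilde{x}_k$, $F_{w,\lambda}^{-1}\tilde{x}_{k+1}$ lie in $V_{1,\lambda}\cap\partial\blacktriangle$, where the resistance is bounded by $\tfrac{2}{3}(1+r)$ directly.

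The lower bound, however, has a genuine gap, and it is exactly at the step you flag as the ``main obstacle'': the uniform estimate $\widetilde{R}(\tilde{x},\tilde{y})\geq cs$ is \emph{false}. The problem is your choice of scale. Taking $M$ to be the largest level at which $x,y$ share a cell does not force $\tilde{x}=F_{w,\lambda}^{-1}x$ and $\tilde{y}=F_{w,\lambda}^{-1}y$ to be uniformly separated: they can both be arbitrarily close to a junction vertex $F_{i,\lambda}p_j=F_{j,\lambda}p_i$ while lying in different level-$(M+1)$ cells. Concretely, take $x=(\tfrac12-\varepsilon,0)$ and $y=(\tfrac12+\varepsilon,0)$ on $\partial_\downarrow\blacktriangle$; then $M=0$, $\tilde{x}=x$, $\tilde{y}=y$, and any $g$ with $g(\tilde{x})=1$, $g(\tilde{y})=0$ satisfies $\mcE_{\lambda,s}(g)\geq R_{\lambda,s}(\tilde{x},\tilde{y})^{-1}\geq c\,\varepsilon^{-\theta}\to\infty$, so no competitor of energy $O(1/s)$ exists and $\widetilde{R}(\tilde{x},\tilde{y})\to 0$. (Your bound would assert $R_{\lambda,s}(x,y)\geq cs$ for all $\varepsilon$, contradicting the upper bound.) The paper avoids this by choosing the scale from the metric rather than from the cell structure: set $m=[-\log_2 d(x,y)]+2$, so that the neighborhoods $U_m(x)$ and $U_m(y)$ (unions of \emph{all} level-$m$ cells $F_{w,\lambda}\AG_\lambda$, $w\in\{1,2,3\}^m$, containing $x$, resp.\ $y$) are disjoint; one then builds a test function equal to $1$ on $U_m(x)\cap\partial_\downarrow\blacktriangle$ and $0$ on the complementary level-$m$ vertices, harmonically extended, and bounds its energy by $c\,s^{-1}r(\lambda,s)^{-m}$ using Lemma \ref{lemma53} on each of the $O(1)$ cells where it is nonconstant (including the $O(1)$ added-triangle cells $F_{w,\lambda}\AG_\lambda$, $w\in\{1,2,3\}^k\times\{4\}$, each contributing at most $s^{-1}r^{-k}$ by the Markov property). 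Your shorting reduction is sound as far as it goes, but to salvage your route you would in any case have to separate $\tilde{x}$ from $\tilde{y}$ at a further scale determined by $d(\tilde{x},\tilde{y})$, which collapses back into the paper's construction.
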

\begin{proof}
	Let $x\neq y$ and $m=[-\frac{\log d(x,y)}{\log 2}]+2$.
	
	First, we show the upper bound. Let $\omega\in \pi_\lambda^{-1}(x)$. It is easy to see  
	\[R_{\lambda,s}(x,F_{[\omega]_m}p_{\omega_{m+1}})\leq \sum_{k=m}^\infty R_{\lambda,s}(F_{[\omega]_k}p_{\omega_{k+1}},F_{[\omega]_{k+1}}p_{\omega_{k+2}})\leq \frac{2}{3}\frac{r(\lambda,s)^{m+1}}{1-r(\lambda,s)}.\]
	The same estimate holds for $R_{\lambda,s}(y, F_{[\eta]_m}p_{\eta_{m+1}})$, where $\eta\in \pi^{-1}(y)$. In addition, there is a finite path $F_{[\omega]_m}p_{\omega_{m+1}}=x_0,x_1,\cdots,x_N=F_{[\eta]_m}p_{\eta_{m+1}}\in \bigcup_{w\in \Lambda_m}F_{w,\lambda}V_0$ for some $N\leq 6$. So 
	\[R_{\lambda,s}(x,y)\leq R_{\lambda,s}(x,x_0)+R_{\lambda,s}(x_0,x_N)+R_{\lambda,s}(x_N,y)\leq (4+\frac{4}{3}\frac{r(\lambda,s)}{1-r(\lambda,s)})r(\lambda,s)^m.\]
	The upper bound follows immediately by the choice of $m$.
	
	Next, we prove the lower bound estimate. Write $V_m=\bigcup_{w\in \{1,2,3\}^m}V_0$ for short. For each $z\in \partial\blacktriangle$, we define 
	\[U_m(z)=\bigcup\big\{F_w\AG_\lambda:w\in \{1,2,3\}^m, z\in F_w\AG_\lambda\big\}.\]
	Without loss of generality, we assume $x\in \partial_\downarrow\blacktriangle$, and define $u''_x\in C\big(V_m\bigcup (U_m(x)\bigcap \partial_\downarrow\blacktriangle)\big)$ by   
	\[u''_x(z)=\begin{cases}
	0,\quad\text{ if }z\in V_m\setminus(U_m(x)\cap \partial_\downarrow\blacktriangle),\\
	1,\quad\text{ if }z\in U_m(x)\cap \partial_\downarrow\blacktriangle.
	\end{cases}\] 
	Next, we let $u'_x$ be the harmonic extension of $u''_x$ on $\bigcup_{w\in \{1,2,3\}^m}F_{w,\lambda}\AG_\lambda$. By Lemma \ref{lemma53}, we have the upper bound
	\[
	\sum_{w\in \{1,2,3\}^m}r_w^{-1}\mcE_{\lambda,s}(u'_x\circ F_{w,\lambda})\leq 2(2s^{-1}+2r(\lambda,s)^{-1})r(\lambda,s)^{-m}+4r(\lambda,s)^{-m}.
	\]
	Finally, we let $u_x$ be the harmonic extension of $u'_x$ on $\AG_\lambda$. Noticing that $u'_x$ takes nonzero values on at most $4$ cells of the form $F_{w,\lambda}\AG_\lambda, w\in \{1,2,3\}^m$, $u_x$ has nonzero values on at most $12$ cells of the form $F_{w,\lambda}\AG_\lambda$, $w\in \bigcup_{k=0}^{m-1}\{1,2,3\}^k\times \{4\}$. In addition, by the Markov property, we have $0\leq u_x\leq 1$, so
	\begin{equation}\label{eqn52}
	\mathcal{E}_{\lambda,s}(u_x)\leq 36s^{-1}r(\lambda,s)^{-m+1}+\sum_{w\in \{1,2,3\}^m}r_w^{-1}\mcE_{\lambda,s}(u'_x\circ F_{w,\lambda})\leq c_3\cdot s^{-1}\cdot r(\lambda,s)^{-m}.
	\end{equation}
	Lastly, noticing that $U_m(x)\cap U_m(y)=\emptyset$ by the choice of $m$, we have $u_x(x)=1$ and $u_x(y)=0$. The lower bound estimate follows immediately from the estimate (\ref{eqn52}).
\end{proof}

\subsection{A bound for $r(\lambda,s)$} In Proposition \ref{prop54}, we have a term $\big(1-r(\lambda,s)\big)^{-1}$ in the upper bound. As an improvement, we will show that $r(\lambda,s)$ is uniformly bounded away from $1$ when we have the restrictions $\lambda\in [a,\frac{1}{2}-a]$ and $s\geq b$ for some $0<a<\frac{1}{4}$ and $b>0$.

\begin{lemma}\label{lemma55}
	Let $\lambda\in \mathbb{D}\cap(0,\frac{1}{2})$ and $0<s<1$. Let $q_{1,\lambda}$ be the unique vertice on $\partial_\downarrow\blacktriangle$ such that $F_{1,\lambda}(q_{1,\lambda})=F_{4,\lambda}(p_1)$, and define $h'_{\lambda,s}$ on $\{p_1,p_2,p_3,q_{1,\lambda}\}$ by \[h'_{\lambda,s}(p_1)=0,\quad h'_{\lambda,s}(p_2)=h'_{\lambda,s}(p_3)=h'_{\lambda,s}(q_{1,\lambda})=1.\]
	Let $h_{\lambda,s}$ be the harmonic extension of $h'_{\lambda,s}$ on $\AG_\lambda$. Then, \[r(\lambda,s)=\frac{c^{(1)}_{p_1,F_{1,\lambda}p_2}h_{\lambda,s}(F_{1,\lambda}p_2)+c^{(1)}_{p_1,F_{1,\lambda}p_3}h_{\lambda,s}(F_{1,\lambda}p_3)+c^{(1)}_{p_1,F_{4,\lambda}p_1}h_{\lambda,s}(F_{4,\lambda}p_1)}{c^{(1)}_{p_1,F_{1,\lambda}p_2}+c^{(1)}_{p_1,F_{1,\lambda}p_3}+c^{(1)}_{p_1,F_{4,\lambda}p_1}}.\]
\end{lemma}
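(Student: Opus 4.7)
The plan is to prove the formula for $r(\lambda,s)$ by combining a direct computation of $\mcE_{\lambda,s}(h)$ via the trace form on $V := \{p_1, p_2, p_3, q_{1,\lambda}\}$ with a flux-balance identity at $p_1$ in the level-$1$ graph $G_{1,\lambda}$.

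First I would introduce the trace form $\tilde{\mcE} := [\mcE_{\lambda,s}]_V$ with conductances $\tilde c_{x,y}$. Since $h_{\lambda,s}$ is by definition the harmonic extension of $h'_{\lambda,s}$, we have $\mcE_{\lambda,s}(h_{\lambda,s}) = \tilde{\mcE}(h'_{\lambda,s})$, and because $h'$ equals $0$ at $p_1$ and $1$ at the other three points of $V$, every edge not incident to $p_1$ contributes $(1-1)^2 = 0$, so
\[
\mcE_{\lambda,s}(h_{\lambda,s}) = \tilde c_{p_1, p_2} + \tilde c_{p_1, p_3} + \tilde c_{p_1, q_{1,\lambda}}. \qquad (*)
\]
Next I would identify each of the three conductances at $p_1$ in $G_{1,\lambda}$ with $r(\lambda,s)^{-1}$ times a trace conductance in $(*)$. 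The geometric input is that $p_1$ lies only in the cell $F_{1,\lambda}\AG_\lambda$ (a direct check rules out $F_{i,\lambda}\AG_\lambda$ for $i=2,3,4$), and an enumeration of the nine vertices of $V_{1,\lambda}$ yields $V_{1,\lambda}\cap F_{1,\lambda}\AG_\lambda = \{p_1, F_{1,\lambda}p_2, F_{1,\lambda}p_3, F_{4,\lambda}p_1\}$, which corresponds under $F_{1,\lambda}^{-1}$ to exactly $V$ (using $F_{1,\lambda}^{-1}(F_{4,\lambda}p_1) = q_{1,\lambda}$). By the self-similar decomposition of $\mcE^{(1)}_{\lambda,s}$ only the $F_{1,\lambda}$-cell contributes at $p_1$, so $c^{(1)}_{p_1, F_{1,\lambda}p_j} = r(\lambda,s)^{-1}\tilde c_{p_1, p_j}$ for $j=2,3$ and $c^{(1)}_{p_1, F_{4,\lambda}p_1} = r(\lambda,s)^{-1}\tilde c_{p_1, q_{1,\lambda}}$, and these are the only edges incident to $p_1$ in $G_{1,\lambda}$.

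Finally I would compute $\mcE_{\lambda,s}(h_{\lambda,s})$ a second way via flux balance. Passing to the trace $\mcE^{\sharp} := [\mcE_{\lambda,s}]_{V_{1,\lambda}\cup\{q_{1,\lambda}\}}$, the restriction of $h_{\lambda,s}$ to this set is the $\mcE^\sharp$-harmonic extension of $h'_{\lambda,s}$, hence is harmonic at every vertex outside $V$; and because $q_{1,\lambda}\notin F_{1,\lambda}\AG_\lambda$, enlarging the trace set by $q_{1,\lambda}$ leaves conductances at $p_1$ unchanged, so $c^{\sharp}_{p_1, y} = c^{(1)}_{p_1, y}$ for each neighbor $y$. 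The standard Dirichlet energy identity together with current conservation at the four Dirichlet points $V$ then gives
\[
\mcE_{\lambda,s}(h_{\lambda,s}) = -I^{\text{out}}(p_1) = \sum_{y\sim p_1} c^{(1)}_{p_1, y}\, h_{\lambda,s}(y),
\]
which is precisely the numerator in the lemma. Combining with $(*)$, both the numerator and $r(\lambda,s)\cdot(\text{denominator})$ equal $\tilde c_{p_1,p_2} + \tilde c_{p_1,p_3} + \tilde c_{p_1,q_{1,\lambda}}$, so the stated ratio equals $r(\lambda,s)$.

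The main obstacle is the geometric bookkeeping in the second step: verifying that $V_{1,\lambda}\cap F_{1,\lambda}\AG_\lambda$ contains exactly those four points for every dyadic $\lambda\in(0,\tfrac12)$ (this uses that $q_{1,\lambda}=(2\lambda,0)\in V_{*,\lambda}$ in the dyadic case, while $F_{4,\lambda}p_2, F_{4,\lambda}p_3$ and the midpoint $(1/2,0)$ pull back under $F_{1,\lambda}^{-1}$ to points with non-positive $y$-coordinate), and confirming that adjoining $q_{1,\lambda}$ to the trace set does not alter conductances at $p_1$.
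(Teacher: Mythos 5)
Your proposal is correct and follows essentially the same route as the paper: both compute the energy of the harmonic extension in two ways — once via the trace onto $V=\{p_1,p_2,p_3,q_{1,\lambda}\}$ and once via flux balance at $p_1$ in the level-one network (the paper's equation chain for $[\mcE_{\lambda,s}]_{V}(1-h_{\lambda,s})$) — and combine these with the self-similar scaling $c^{(1)}_{p_1,F_{1,\lambda}x}=r(\lambda,s)^{-1}\tilde c_{p_1,x}$, which is the paper's identity $c^{(1)}_{F_{1,\lambda}x,F_{1,\lambda}y}=r(\lambda,s)^{-1}c_{x,y}$. Your extra care about $V_{1,\lambda}\cap F_{1,\lambda}\AG_\lambda$ and about adjoining $q_{1,\lambda}$ to the trace set is sound and simply makes explicit what the paper leaves implicit.
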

\begin{proof}
	Let $V=\{p_1,p_2,p_3,q_{1,\lambda}\}$ and define $c_{x,y}$ for $x\neq y\in V$ with
	\[[\mcE_{\lambda,s}]_{V}(f)=\frac{1}{2}\sum_{x\neq y\in V}c_{x,y}\big(f(x)-f(y)\big)^2.\]
	Then, by the self-similarity of $(\mathcal{E}_{\lambda,s},\mathcal{F}_{\lambda,s})$, we can see that 
	\begin{equation}\label{eqn53}
	c^{(1)}_{F_{1,\lambda}x,F_{1,\lambda}y}=r(\lambda,s)^{-1}c_{x,y}
	\end{equation}
	for any $x\neq y\in V$. By the definition of $h_{\lambda,s}$, we have 
	\begin{equation}\label{eqn54}
	\begin{aligned}
	\sum_{x\in V\setminus \{p_1\}}c_{x,p_1}h(x)&=[\mcE_{\lambda,s}]_{V}(1-h_{\lambda,s})\\
	&=[\mcE_{\lambda,s}]_{V\cup V_{1,\lambda}}(1-h_{\lambda,s})=\sum_{x\in V\setminus \{p_1\}} c^{(1)}_{F_{1,\lambda}x,p_1}h(F_{1,\lambda}x).
	\end{aligned}
	\end{equation}
	The Lemma follows from equations (\ref{eqn53}) and (\ref{eqn54}). 
\end{proof}

\begin{proposition}\label{prop56}
	Let $0<a<\frac{1}{4}$ and $0<b<1$. There is a constant $c<1$ such that  
	\[r(\lambda,s)\leq c,\qquad \forall \lambda\in \mathbb{D}\cap [a,\frac{1}{2}-a],\text{ }s\in [b,1).\]
\end{proposition}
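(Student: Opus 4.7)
The strategy is to quantify Lemma~\ref{lemma55} by combining harmonicity of $h_{\lambda,s}$ at the level-one neighbors of $p_1$ with self-similarity. Set $z_2:=F_{1,\lambda}p_2$, $z_3:=F_{1,\lambda}p_3$, $z_4:=F_{4,\lambda}p_1$, $c_i:=c^{(1)}_{p_1,z_i}$, and $C_i:=\sum_{y}c^{(1)}_{z_i,y}$ (the total conductance at $z_i$ in the level-one trace). Rearranging Lemma~\ref{lemma55} gives
\[
1-r(\lambda,s) = \frac{\sum_{i=2}^4 c_i\bigl(1-h_{\lambda,s}(z_i)\bigr)}{\sum_{i=2}^4 c_i},
\]
so it suffices to bound this quantity below by a positive constant depending only on $a,b$.

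Since each $z_i$ lies outside $\{p_1\}\cup B$ with $B:=\{p_2,p_3,q_{1,\lambda}\}$, the function $h_{\lambda,s}$ is harmonic at $z_i$ with respect to the trace form $\mcE^{(1)}_{\lambda,s}$. Combined with $h_{\lambda,s}(p_1)=0$ and $0\le h_{\lambda,s}\le 1$ pointwise (Markov property), the mean-value identity yields $1-h_{\lambda,s}(z_i)\ge c_i/C_i$. Substituting this bound and then applying the Cauchy--Schwarz inequality in the form $\sum c_i^2/C_i\ge(\sum_i c_i)^2/\sum_i C_i$ gives the key estimate
\[
1-r(\lambda,s)\ge \frac{\sum_i c_i}{\sum_i C_i}.
\]

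I would then bound the right-hand side below uniformly. For the numerator, $p_1$ lies only in the $F_{1,\lambda}$-cell and $\{z_2,z_3,z_4\}=F_{1,\lambda}B$, so self-similarity of $(\mcE_{\lambda,s},\mcF_{\lambda,s})$ gives $\sum_i c_i=\bigl(r(\lambda,s)R_{\lambda,s}(p_1,B)\bigr)^{-1}\ge 3/2$, using $r(\lambda,s)<1$ and $R_{\lambda,s}(p_1,B)\le R_{\lambda,s}(p_1,p_2)=2/3$. For the denominator, $C_i=R_{\lambda,s}(z_i,V_{1,\lambda}\setminus\{z_i\})^{-1}$; for the boundary vertices $z_2,z_3\in\partial\blacktriangle$ an upper bound follows from Proposition~\ref{prop54}, using both the uniform lower bound $r(\lambda,s)\ge 3/5$ (which controls the exponent $\theta$) and the uniform separation $\min_{x\ne y\in V_{1,\lambda}}d(x,y)\ge a$ which follows from $\lambda\in[a,\tfrac{1}{2}-a]$. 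For the interior vertex $z_4=F_{4,\lambda}p_1$, Proposition~\ref{prop54} does not apply directly; instead I would use a self-similar application of Lemma~\ref{lemma53} inside the $F_{4,\lambda}$-cell (a copy of $\AG_\lambda$ rescaled by $s$) to lower bound the resistance from $z_4$ to $\{F_{4,\lambda}p_2,F_{4,\lambda}p_3\}$, combined with Proposition~\ref{prop54}-type estimates for the resistances to the remaining vertices of $V_{1,\lambda}$ reached through the $F_{1,\lambda}$-cell.

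The main obstacle is precisely this uniform upper bound on $C_4$ at the interior vertex $z_4$: Proposition~\ref{prop54} is inapplicable since $z_4\notin\partial\blacktriangle$, so one must reduce the estimate to resistance bounds inside the sub-cells. The role of the hypotheses $\lambda\in[a,\tfrac12-a]$ and $s\in[b,1)$ is precisely to prevent degeneration in this step: as $\lambda\to 0$ or $\tfrac12$ the point $z_4$ collides with $z_2$ or $z_3$, while as $s\to 0$ the 4th cell degenerates, and in both limits the estimate for $C_4$ blows up. Once $\sum_i C_i\le C(a,b)$ is established, the inequality $1-r(\lambda,s)\ge(3/2)/C(a,b)$ yields the claim with $c:=1-3/(2C(a,b))<1$.
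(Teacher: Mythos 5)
Your proposal is correct in its core mechanism but takes a genuinely different route from the paper. The paper does not try to quantify $1-r(\lambda,s)$: it collapses the lower boundary of a region $A=\bigcup_{w\in W_A}F_{w,\lambda}\AG_\lambda$ to a single node $p$, dominates $h_{\lambda,s}$ from Lemma \ref{lemma55} by the harmonic function of the resulting five-point network, and then proves $\sup_{\mcD}\max_x h_{\mcD}(x)<1$ by a compactness argument over an explicit family $\mathscr{D}_{V_A}$ of networks (cut out by upper bounds on all conductances plus one lower bound) which contains the relevant network whenever $r(\lambda,s)\ge\frac45$. Your argument replaces this soft compactness step by the explicit inequality $1-r\ge\sum_i c_i/\sum_i C_i$, obtained from harmonicity of $h_{\lambda,s}$ at the level-one neighbours of $p_1$, the Markov bound $0\le h_{\lambda,s}\le1$, and Cauchy--Schwarz; this is cleaner and produces an effective constant $c$, whereas the paper's constant is not effective. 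Both proofs ultimately consume the same inputs: Lemma \ref{lemma55}, the lower bound of Proposition \ref{prop54} (with $r\ge\frac35$ controlling the exponent and $s\ge b$), and the geometric separation coming from $\lambda\in[a,\tfrac12-a]$. Your numerator bound $\sum_i c_i=(r(\lambda,s)R_{\lambda,s}(p_1,\{p_2,p_3,q_{1,\lambda}\}))^{-1}\ge\tfrac32$ is correct.

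The step you must still complete is the uniform bound on $\sum_i C_i$, and the difficulty you flag for $z_4$ in fact also affects $z_2,z_3$: the network $V_{1,\lambda}\setminus\{z_i\}$ contains the interior points $F_{4,\lambda}p_j$, so Proposition \ref{prop54} does not directly control all the relevant pairs even for the boundary vertices. The fix is uniform and is essentially the paper's requirement 3. By self-similarity the trace $[\mcE_{\lambda,s}]_{V_{1,\lambda}}$ decomposes cell by cell, so every conductance incident to $z_i$ equals $r_w^{-1}$ times a conductance of a trace of $\mcE_{\lambda,s}$ onto a finite subset of $\partial\blacktriangle$ whose points are separated by a distance $\ge ca$; for instance $c^{(1)}_{z_2,F_{4,\lambda}p_1}=r^{-1}\tilde c_{p_2,q_{1,\lambda}}$ with $d(p_2,q_{1,\lambda})=2\lambda\ge 2a$, and the only exception is the $F_{4,\lambda}$-cell contribution at $z_4$, which is exactly $s^{-1}$ times the trace of $\mcE_{\lambda,s}$ onto $V_0$ and hence contributes $2s^{-1}\le 2/b$ (no appeal to Lemma \ref{lemma53} is needed there). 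Combining the elementary bound $c_{x,y}\le R(x,y)^{-1}$ (used already in Lemma \ref{lemma27}) with Proposition \ref{prop54} then gives $\sum_i C_i\le C(a,b)$. One further bookkeeping point: $h_{\lambda,s}$ is harmonic at $z_i$ only in the trace network on $V_{1,\lambda}\cup\{q_{1,\lambda}\}$, not on $V_{1,\lambda}$; this does not alter $c^{(1)}_{p_1,z_i}$, since $q_{1,\lambda}$ lies in a cell not containing $p_1$, but the $C_i$ must be taken in the enlarged network.
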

\begin{proof}
	Choose $m\geq 0$ so that $2^{-m-1}<a$. We consider the set 
	\[A=\bigcup_{w\in W_A}F_{w,\lambda}\AG_\lambda,\text{ with } W_A=\{1,4,21^{(m)},12^{(m)}\}.\]
	Also, we set 
	\[\tilde{V}_A=\{F_{1,\lambda}p_2,F_{1,\lambda}p_3,F_{4,\lambda}p_1,p_1\},\text{ and }V_A=\tilde{V}_A\cup\{p\},\] 
	where $p$ represents the lower boundary $B=\bigcup_{w\in W_A\setminus \{1\}}F_{w,\lambda}\partial_\downarrow\blacktriangle$. See Figure \ref{fig41} for an illustration.
	\begin{figure}[htp]
		\includegraphics[width=4.5cm]{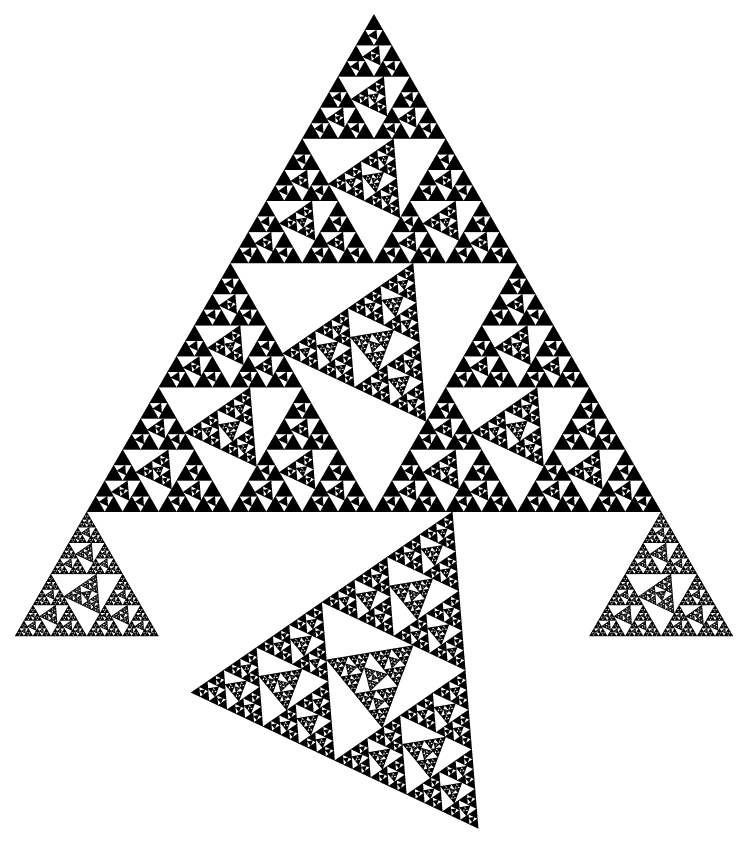}\qquad\qquad
		\includegraphics[width=4cm]{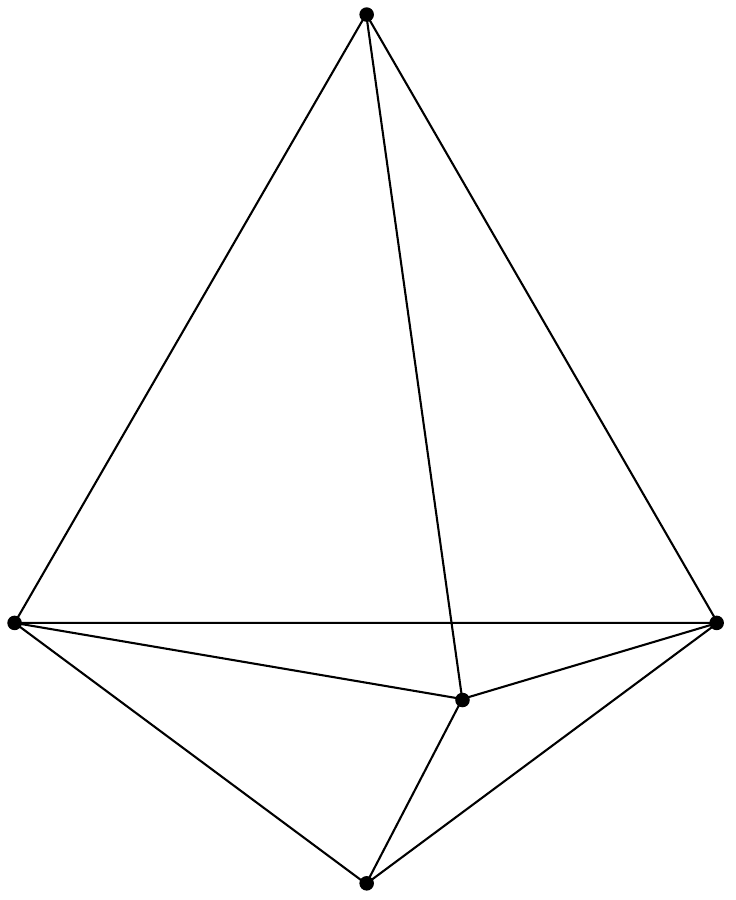}
		\begin{picture}(0,0)
		\put(-63,-5){$p$}
		\put(-65,140){$p_{1}$}
		\put(-143,46){$F_{1,\lambda}p_2$}
		\put(-7,46){$F_{1,\lambda}p_3$}
		\put(-70,24){$F_{4,\lambda}p_1$}
		\end{picture}
		\caption{The area $A$ and the set $V_A$}\label{fig41}
	\end{figure} 
	
	Let $\tilde{h}'=1_B\in C(\{p_1\}\cup B)$ and define $\tilde{h}$ to be the harmonic extension of $\tilde{h}'$ on $A$. It is then clear that $\tilde{h}(F_{1,\lambda}x)\geq h_{\lambda,s}(F_{1,\lambda}x)$ for each $x\in \tilde{V}_A$, where $h_{\lambda,s}$ is the same function as in Lemma \ref{lemma55}. Thus, by Lemma \ref{lemma55}, we can see that 
	\begin{equation}\label{eqn55}
	r(\lambda,s)\leq \max_{x\in \tilde{V}_A}h_{\lambda,s}(x)\leq \max_{x\in\tilde{V}_A}\tilde{h}(x).
	\end{equation}
	To simplify the question, we let $p$ represent $B$ and study the following energy on $V_A$,
	\[\mcE_{V_A}(f)=\frac{1}{2}\sum_{x\neq y\in \tilde{V}_A}c^{(1)}_{x,y}\big(f(x)-f(y)\big)^2+\sum_{w\in W_A\setminus \{1\}}r_w^{-1}R_{\lambda,s}(p_1,\partial_\downarrow\blacktriangle)^{-1}\big(f(p)-f(F_{w,\lambda}p_1)\big)^2.\]
	In a natural way, with a slight abuse of the notation, we view $\tilde{h}$ as a function on $V_A$.  	
	
	We consider a class $\mathscr{D}_{V_A}\subset \mathcal{RF}(V_A)$, where each $\mcD\in \mathscr{D}_{V_A}$ has the form
	\[\mcD(f)=\frac{1}{2}\sum_{x\neq y\in \tilde{V}_A}c_{x,y}\big(f(x)-f(y)\big)^2+\sum_{x\in \tilde{V}_A\setminus \{p_1\}} c_{p,x}\big(f(p)-f(x)\big)^2,\]
	and satisfies the following requirements 1-4.
	
	1. Let $\mcD_{\tilde{V}_A}(f)=\frac{1}{2}\sum_{x\neq y\in \tilde{V}_A}c_{x,y}\big(f(x)-f(y)\big)^2$. Then there exists $1\leq c_1\leq \frac{5}{4}$ such that
	\[[\mcD_{\tilde{V}_A}]_{F_{1,\lambda}V_0}(f)=\frac{c_1}{2}\sum_{i\neq j}\big(f(F_{1,\lambda}p_i)-f(F_{1,\lambda}p_j)\big)^2.\] 
	
	2. $c_{p,x}\leq 2(\frac{5}{4})^{m+1}(\frac{5}{4}+b^{-1})$, for $x\in \{F_{1,\lambda}p_2,F_{1,\lambda}p_3\}$; $c_{p,x}\leq \frac{5}{2b}(\frac{5}{4}+b^{-1})$, for $x=F_{4,\lambda}p_1$.
	
	3. $c_{F_{4,\lambda}p_1,x}\leq c^{-1}_{5.4.1}\frac{5}{4}b^{-1}a^{\frac{\log 4-\log 5}{\log 2}}$, for $x\in F_{1,\lambda}V_0$, where $c_{5.4.1}$ is the constant in the lower bound estimate of Proposition \ref{prop54}.
	
	4. $\max_{x\in V_0}c_{F_{4,\lambda}p_1, F_{1,\lambda}x}\geq \frac{1}{18}$.
	
	For each $\mcD\in \mathscr{D}_{V_A}$, we define $h_{\mcD}$ as the unique function on $l(V_A)$ such that 
	\[\begin{cases}
	h_{\mcD}(p)=1,\quad h_{\mcD}(p_1)=0,\\
	h_{\mcD}\text{ is harmonic on }V_A\setminus\{p,p_1\}. 
	\end{cases}\]
	As a consequence of requirements 1,2,4, we have $\max_{x\in \tilde{V}_A} h_{\mcD}(x)<1$ for any $\mcD\in \mathscr{D}_{V_A}$.  Since the requirements 1-4 provide a uniform upper bound for all the conductances, $\mathscr{D}_{V_A}$ is compact with the natural topology. Thus $\sup_{\mcD\in \mathscr{D}_{V_A}}\max_{x\in \tilde{V}_A} h_{\mcD}(x)<1$.

	One can check that if $r(\lambda,s)\geq \frac{4}{5}$, $\mcE_{V_A}\in \mathscr{D}_{V_A}$: the requirement 1 is natural by the self-similarity of $(\mcE_{\lambda,s},\mcF_{\lambda,s})$; the requirement 2 is a consequence of Lemma \ref{lemma53}; the requirement 3 is a consequence of the lower bound estimate in Proposition \ref{prop54}; the requirement 4 can be proven by contradiction, if $r(\lambda,s)\geq\frac{4}{5}$ and $\max_{x\in V_0}c^{(1)}_{F_{4,\lambda}p_1, F_{1,\lambda}x}<\frac{1}{18}$, then one can easily check $[\mcE^{(1)}_{\lambda,s}]_{V_0}(f)<\frac{1}{2}\sum_{i\neq j}\big(f(p_i)-f(p_j)\big)^2=\mcE^{(0)}_{\lambda,s}(f)$ for any $f\in l(V_0)\setminus Constants$. Combining the above observations with equation (\ref{eqn55}), we finally have 
	\[r(\lambda,s)\leq \max\big\{\frac{4}{5},\sup_{\mcD\in \mathscr{D}_A}\max_{x\in \tilde{V}_A}h_{\mcD}(x)\big\}<1.\]
	It is easy to see that the above estimate is independent of $a\leq\lambda\leq\frac{1}{2}-a$, $s\geq b$. 
\end{proof}

\subsection{Resistance estimates on $\AG_\lambda$}
At the end of this section, we provie a rough resistance estimate on $\AG_\lambda$ for completeness. In particular, the estimate shows the family $R_{\lambda,s}$ satisfies the condition of Theorem \ref{thm29}.

\begin{proposition}\label{prop57}
	Let $\lambda\in \mathbb{D}\cap [a,\frac{1}{2}-a]$, $s\in [b,1-b]$ for some $0<a<\frac{1}{4}$ and $0<b<\frac{1}{2}$. Let $\eta_*(\lambda,s)=\min\{\frac{\log s}{\log \rho},-\frac{\log r(\lambda,s)}{\log 2}\}$ and $\eta^*(\lambda,s)=\max\{\frac{\log s}{\log \rho},-\frac{\log r(\lambda,s)}{\log 2}\}$, where $\rho$ is the contraction ratio of $F_{4,\lambda}$. Then there are constants $0<c_1,c_2<\infty$ depending only on $a,b$ such that
	\[c_1d(x,y)^{\eta^*(\lambda,s)}\leq R_{\lambda,s}(x,y)\leq c_2d(x,y)^{\eta_*(\lambda,s)},\qquad\forall x,y\in \AG_\lambda.\]
\end{proposition}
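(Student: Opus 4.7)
The plan is to exploit the self-similarity of $(\mcE_{\lambda,s}, \mcF_{\lambda,s})$ and reduce the estimate to the cell structure of $\AG_\lambda$. The starting observation is an algebraic comparison: for any word $w \in W_m$, the contraction ratio $\rho_w$ and the resistance factor $r_w = r_{w_1}\cdots r_{w_m}$ satisfy
\[
\rho_w^{\eta^*(\lambda,s)} \leq r_w \leq \rho_w^{\eta_*(\lambda,s)}.
\]
Indeed, writing $r(\lambda,s) = (1/2)^{\theta_1}$ with $\theta_1 = -\log r(\lambda,s)/\log 2$ and $s = \rho^{\theta_2}$ with $\theta_2 = \log s/\log \rho$, the quotient $\log r_w/\log \rho_w$ is a convex combination of $\theta_1$ and $\theta_2$, hence lies in $[\eta_*,\eta^*]$, and $\rho_w < 1$ flips the direction of the inequality. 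Proposition \ref{prop56} together with the restrictions on $\lambda,s$ guarantees that $r(\lambda,s),s,\rho$ are all uniformly bounded away from $0$ and $1$, so every geometric series below will converge uniformly.

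For the upper bound, I would adapt the telescoping argument from the proof of Proposition \ref{prop54}. Given $x,y\in\AG_\lambda$ with $D:=d(x,y)$, choose an address $\omega\in\pi_\lambda^{-1}(x)$ and let $m$ be the smallest integer with $\rho_{[\omega]_m}\leq D$. Since consecutive levels differ by a factor in $\{1/2,\rho\}$, we have $\rho_{[\omega]_m}\geq cD$ for a uniform $c>0$. The telescoping chain
\[
R_{\lambda,s}\bigl(x,F_{[\omega]_m,\lambda}(p_{\omega_{m+1}})\bigr)\leq \sum_{k=m}^{\infty}r_{[\omega]_k}R_{\lambda,s}(p_1,p_2)\lesssim r_{[\omega]_m}\leq \rho_{[\omega]_m}^{\eta_*}\lesssim D^{\eta_*}
\]
works because $\rho_{[\omega]_k}$ decays geometrically at rate bounded away from $1$. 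An identical estimate holds for $y$. The two boundary vertices obtained lie within $\lesssim D$ of each other in the ambient metric, and a uniformly bounded chain of cells at comparable scale connects them; applying the triangle inequality for $R_{\lambda,s}$ together with the same cell-by-cell bound (and Proposition \ref{prop54} inside any cell that happens to lie along $\partial\blacktriangle$) yields the claimed upper bound.

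For the lower bound, I would generalise the test function $u_x$ used in Proposition \ref{prop54}. Choose a word $w$ with $x\in F_{w,\lambda}\AG_\lambda$, $\rho_w\asymp D$, and such that $y$ lies outside the union of $F_{w,\lambda}\AG_\lambda$ with its finitely many neighbouring cells at this scale. Build $u_x\in\mcF_{\lambda,s}$ that equals $1$ at $x$, vanishes outside this local neighbourhood, and is obtained by harmonic extension in between. By self-similarity the energy contribution on $F_{w,\lambda}\AG_\lambda$ is $r_w^{-1}$ times a rescaled energy bounded as in Lemma \ref{lemma53}, and the uniformly bounded number of neighbouring cells contribute comparable terms, so $\mcE_{\lambda,s}(u_x)\lesssim r_w^{-1}\leq \rho_w^{-\eta^*}\lesssim D^{-\eta^*}$. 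Since $u_x(x)-u_x(y)=1$, the variational definition of $R_{\lambda,s}$ yields $R_{\lambda,s}(x,y)\gtrsim D^{\eta^*}$.

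The main obstacle is the uniform geometric lemma underlying both halves: for each $x\in\AG_\lambda$ and target scale $D$, one must produce a word $w$ with $\rho_w\in [c_1 D, c_2 D]$ and $x\in F_{w,\lambda}\AG_\lambda$, and for the lower bound also arrange a definite separation of $y$ from the neighbouring cells. The difficulty is that the two contraction ratios $1/2$ and $\rho$ are in general different, so cells at a fixed combinatorial level have varying diameters and one must truncate addresses at word-dependent levels while keeping all constants uniform in $(\lambda,s)$ over the parameter box; this is precisely where the dyadic rational assumption (so that $\AG_\lambda$ is p.c.f.\ with the finite structure described in Section~4) and Proposition \ref{prop56} are used.
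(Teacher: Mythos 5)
Your proposal is correct and follows essentially the same route as the paper: the upper bound via the telescoping-and-chaining argument of Proposition \ref{prop54} (with Proposition \ref{prop56} and $s\leq 1-b$ supplying the uniform geometric decay of the series), and the lower bound via a cutoff function equal to $1$ on a cell containing $x$ of diameter comparable to $d(x,y)$ and vanishing outside its touching neighbours, with energy controlled through Lemma \ref{lemma53}. Your explicit comparison $\rho_w^{\eta^*}\leq r_w\leq \rho_w^{\eta_*}$ usefully makes transparent where the two exponents come from (the paper leaves this implicit), and the word-dependent truncation issue you flag is exactly what the paper handles by choosing $w$ with $\diam(F_{w,\lambda}\AG_\lambda)\in[\frac{1}{8}d(x,y),\frac{1}{2}d(x,y)]$ rather than fixing a combinatorial level.
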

\begin{proof}
	The upper bound is proven with a same routine argument as in Proposition \ref{prop54}, where we need Proposition \ref{prop56} and the fact $s\leq 1-b$ to get $c_2<\infty$.
	
	For the lower bound, we choose $m\geq 0, w\in W_m$ such that $x\in F_{w,\lambda}\AG_\lambda$ and $\diam(F_{w,\lambda}\AG_\lambda)=\sup_{z,z'\in F_{w,\lambda}\AG_\lambda}d(z,z')\in [8^{-1}d(x,y),2^{-1}d(x,y)]$. Since $\lambda\in \mathbb{D}\cap [a,\frac{1}{2}-a]$, we can find $m_0>0$ depending only on $a$ such that 
	\[F_{\tau,\lambda}\AG_\lambda\cap F_{\tau',\lambda}\AG_\lambda=\emptyset, \]
	for any $\tau\neq\tau'\in W_{m+m_0}$ satisfying $F_{\tau,\lambda}\AG_\lambda\cap F_{w,\lambda}\AG_\lambda\neq \emptyset$, $F_{\tau',\lambda}\AG_\lambda\cap F_{w,\lambda}\AG_\lambda\neq \emptyset$. Define 
	\[U=\bigcup\big\{F_{\tau,\lambda}\AG_\lambda:\tau\in W_{m+m_0},F_{\tau,\lambda}\AG_\lambda\cap F_{w,\lambda}\AG_\lambda\neq \emptyset\big\}.\]
	Let $u'\in C\big(F_{w,\lambda}\AG_\lambda\bigcup (\AG_\lambda\setminus U)\big)$ be defined by $u'|_{F_{w,\lambda}\AG_\lambda}=1$, $u'|_{\AG_\lambda\setminus U_m(x)}=0$, and let $u$ be the harmonic extension of $u'$ on $\AG_\lambda$. Then $u(x)=1$, $u(y)=0$ and $\mcE_{\lambda,s}(u)\leq cd(x,y)^{-\eta^*(\lambda,s)}$ for some $0<c<\infty$ depending on $a,b$ by applying Lemma \ref{lemma53}.  
\end{proof}

\section{Existence and uniqueness}
In this section, we prove Theorem \ref{thm36}. The existence of a self-similar resistance form follows easily from Theorem \ref{thm29}, \ref{thm213} and Proposition \ref{prop56},\ref{prop57}. On the other hand, the proof of uniqueness needs a little more care, based on Lemma \ref{lemma27} and Proposition \ref{prop54}. 

\subsection{Proof of the existence}
Let $\lambda\in (0,\frac{1}{2})$ and $s\in (0,1)$. Choose $\{\lambda_n\}_{n\geq 1}\subset \mathbb{D}\cap (0,\frac{1}{2}),\{s_n\}_{n\geq 1}\subset (0,1)$ so that $\lambda_n\to \lambda,s_n\to s$ as $n\to \infty$. By Theorem \ref{thm29} and Proposition \ref{prop57}, there is a subsequence $\{n_k\}_{k\to\infty}$ such that $R_{\lambda_{n_k},s_{n_k}}\rightarrowtail R\in \mathcal{RM}(\AG_\lambda)$, and we can assume without loss of generality $r(\lambda_{n_k},s_{n_k})\to r$ for some $r\in [\frac{3}{5},1)$, ganranteed by Proposition \ref{prop56}. 

Let $(\mcE,\mcF)$ be the resistance form associated with $R$ on $\AG_\lambda$, we need to show that $(\mcE,\mcF)$ is self-similar. By Theorem \ref{thm213}, we know that $(\mcE_{\lambda_{n_k},s_{n_k}},\mcF_{\lambda_{n_k},s_{n_k}})$ $\Gamma$-converges to $(\mcE,\mcF)$ on $C(\blacktriangle)$. As a consequence, for each $f\in \mcF$, we can find $f_k \in \mcF_{\lambda_{n_k},s_{n_k}},k\geq 1$ so that $f_k\rightarrowtail f$ and 
\[\mcE(f)=\lim\limits_{k\to\infty}\mcE_{\lambda_{n_k},s_{n_k}}(f_k).\]
For $1\leq i\leq 4$, we also have $\mcE(f\circ F_{i,\lambda})\leq \liminf\limits_{k\to\infty}\mcE_{\lambda_{n_k},s_{n_k}}(f_k\circ F_{i,\lambda_{n_k}})$, noticing that $f_k\circ F_{i,\lambda_{n_k}}\rightarrowtail f\circ F_{i,\lambda}$. As a consequence, and by the self-similarity of $(\mcE_{\lambda_{n_k},s_{n_k}},\mcF_{\lambda_{n_k},s_{n_k}})$, we have
\[
\begin{aligned}
\mcE(f)=\lim\limits_{k\to\infty}\mcE_{\lambda_{n_k},s_{n_k}}(f_k)&\geq r^{-1}\sum_{i=1}^3\liminf\limits_{k\to\infty}\mcE_{\lambda_{n_k},s_{n_k}}(f_k\circ F_{i,\lambda_{n_k}})+s^{-1}\liminf\limits_{k\to\infty}\mcE_{\lambda_{n_k},s_{n_k}}(f_k\circ F_{4,\lambda_{n_k}})\\
&\geq r^{-1}\sum_{i=1}^3\mcE(f\circ F_{i,\lambda})+s^{-1}\mcE(f\circ F_{4,\lambda}).
\end{aligned}
\]
For the reverse inequality, we apply the remark below Theorem \ref{thm213}. For any $f\in \mcF$, we can find $f_k\in \mcF_{\lambda_{n_k},s_{n_k}},k\geq 1$ so that $f_k\rightarrowtail f$, $f_k(F_{i,\lambda_{n_k}}p_j)=f(F_{i,\lambda}p_j)$ for $1\leq i\leq 4,1\leq j\leq3$ and 
\[\mcE(f\circ F_{i,\lambda})=\lim\limits_{k\to\infty}\mcE_{\lambda_{n_k},s_{n_k}}(f_k\circ F_{i,\lambda_{n_k}}),\quad\forall 1\leq i\leq 4.\]
As a consequence, 
\[
\begin{aligned}
\mcE(f)\leq\liminf\limits_{k\to\infty}\mcE_{\lambda_{n_k},s_{n_k}}(f_k)&\leq r^{-1}\sum_{i=1}^3\lim\limits_{k\to\infty}\mcE_{\lambda_{n_k},s_{n_k}}(f_k\circ F_{i,\lambda_{n_k}})+s^{-1}\lim\limits_{k\to\infty}\mcE_{\lambda_{n_k},s_{n_k}}(f_k\circ F_{4,\lambda_{n_k}})\\
&=r^{-1}\sum_{i=1}^3\mcE(f\circ F_{i,\lambda})+s^{-1}\mcE(f\circ F_{4,\lambda}).
\end{aligned}
\]
This finishes the proof.

\subsection{Proof of the uniqueness}
The proof of the uniqueness of self-similar resistance forms is inspired by the celebrated work \cite{BBKT}. The following lemma plays a key role in the proof.

\begin{proposition}[\cite{BBKT}]
	Suppose $(\mcE_1,\mcF)$ and $(\mcE_2,\mcF)$ are local regular resistance forms on $X$ and that 
	\[\mcE_1(f)\leq \mcE_2(f),\qquad\forall f\in \mcF.\]
	Let $\delta>0$ and $\mcE=(1+\delta)\mcE_2-\mcE_1$. Then $(\mcE,\mcF)$ is also a local regular resistance form on $X$. 
\end{proposition}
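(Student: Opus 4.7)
The plan is to verify conditions (RF1)--(RF5) of Definition \ref{def210} together with locality and regularity. All of these except (RF5) are immediate consequences of the sandwich estimate
$$\delta\mcE_2(f) \;\leq\; \mcE(f) \;=\; (1+\delta)\mcE_2(f)-\mcE_1(f) \;\leq\; (1+\delta)\mcE_2(f), \qquad \forall f\in\mcF,$$
which itself follows at once from $\mcE_1\leq\mcE_2$. Nonnegativity together with the identification $\mcE(f)=0$ iff $f$ is constant gives (RF1); the Hilbert-space property of $\mcF$ modulo constants with the form $\mcE$ follows from norm-equivalence with the corresponding space for $\mcE_2$, giving (RF2); (RF3) is inherited because the domain $\mcF$ is unchanged; and the bound $R\leq\delta^{-1}R_2$ on the associated resistance metric gives (RF4). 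Locality is also immediate: if $u,v\in\mcF$ have disjoint supports then $\mcE_i(u,v)=0$ for $i=1,2$, hence $\mcE(u,v)=0$. Regularity is inherited from $(\mcE_2,\mcF)$, since the sandwich implies that $R$ induces on $X$ the same topology as $R_2$ and the domain is unchanged.

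The genuine obstacle is the Markov property (RF5), namely $\mcE(\bar f)\leq \mcE(f)$ for $\bar f=(f\vee 0)\wedge 1$. This is not a formal consequence of $\mcE_1\leq\mcE_2$ because the two Markov inequalities $\mcE_i(f)-\mcE_i(\bar f)\geq 0$ for $i=1,2$ enter with opposite signs in
$$\mcE(f)-\mcE(\bar f) \;=\; (1+\delta)\bigl(\mcE_2(f)-\mcE_2(\bar f)\bigr)\;-\;\bigl(\mcE_1(f)-\mcE_1(\bar f)\bigr).$$
Following \cite{BBKT}, the plan is to localize through energy measures: each local regular resistance form $(\mcE_i,\mcF)$ carries a nonnegative energy measure $\Gamma_i(f)$ on $X$ with $\mcE_i(f)=\Gamma_i(f)(X)$, and the strong-locality chain rule for the truncation $T(x)=(x\vee 0)\wedge 1$ yields $\Gamma_i(\bar f)=\mathbf{1}_{\{0<f<1\}}\,\Gamma_i(f)$. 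Hence, setting $K=\{f\leq 0\}\cup\{f\geq 1\}$,
$$\mcE(f)-\mcE(\bar f) \;=\; (1+\delta)\Gamma_2(f)(K)\;-\;\Gamma_1(f)(K),$$
so it suffices to prove the localized measure comparison $\Gamma_1(f)|_K\leq(1+\delta)\,\Gamma_2(f)|_K$.

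The hardest step will be this measure comparison, since the only ingredient available is the global inequality $\mcE_1\leq \mcE_2$. I would upgrade it to a localized statement by testing against perturbations $g_t=f+th$ where $h\in\mcF$ is nonnegative and vanishes on a neighbourhood of $\{0<f<1\}$; the chain rule and strong locality then let one recover $\int h\,d\Gamma_i(f)$ from the second-order behaviour of $t\mapsto \mcE_i(g_t)-\mcE_i(\overline{g_t})$ restricted to $K$, and the hypothesis forces $\int h\,d\Gamma_1(f)\leq(1+\delta)\int h\,d\Gamma_2(f)$. Letting $h$ range over a sufficiently rich family of such test functions yields the measure inequality on $K$, and hence (RF5). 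With all axioms verified, $(\mcE,\mcF)$ is a local regular resistance form on $X$.
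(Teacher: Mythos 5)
The routine parts of your argument (everything except (RF5)) are fine, and you have correctly isolated the Markov property as the real issue; note that the paper itself offers no proof but simply imports Theorem 2.1 of \cite{BBKT} with the remark that the Dirichlet-form proof transfers to resistance forms. The gap in your proposal sits exactly where you say the hardest step is. You reduce (RF5) to the restricted-measure comparison $\Gamma_1(f)|_K\leq(1+\delta)\,\Gamma_2(f)|_K$ and propose to derive it from the global hypothesis by testing along perturbations $g_t=f+th$. This does not work: $\mcE_i(g_t)=\mcE_i(f)+2t\,\mcE_i(f,h)+t^2\mcE_i(h)$, so the family of inequalities $\mcE_1(g_t)\leq\mcE_2(g_t)$ only returns the global hypothesis again (applied to $h$ and to cross terms); nothing in the scheme isolates the energy measure on $K$, and the quantity $t\mapsto\mcE_i(g_t)-\mcE_i(\overline{g_t})$ is not a quadratic whose coefficients you can read off, since the truncation set of $g_t$ moves with $t$. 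More fundamentally, a global domination $\mcE_1\leq\mcE_2$ of local forms does not in any evident way localize to a domination of energy measures, so this intermediate goal is both unproven and stronger than what you need.

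The repair is short and is essentially the \cite{BBKT} argument. You only need to compare the total masses $\Gamma_i(f)(K)$, not the restricted measures. Set $v=f-\bar f$. The same chain-rule and strong-locality facts you already invoke give $d\Gamma_i(v)=\mathbf{1}_K\,d\Gamma_i(f)$, hence $\Gamma_i(f)(K)=\Gamma_i(v)(X)=\mcE_i(v)$, and the required inequality $\Gamma_1(f)(K)\leq(1+\delta)\Gamma_2(f)(K)$ is then literally the hypothesis $\mcE_1(v)\leq\mcE_2(v)$ applied to the single function $v$. Equivalently, and avoiding $K$ altogether: $d\Gamma_i(\bar f)$ and $d\Gamma_i(v)$ are mutually singular, so by Cauchy--Schwarz for energy measures $\mcE_i(\bar f,v)=0$ for $i=1,2$; therefore $\mcE_i(f)=\mcE_i(\bar f)+\mcE_i(v)$, and $\mcE(f)-\mcE(\bar f)=\mcE(v)\geq\delta\,\mcE_2(v)\geq0$, which is (RF5).
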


\noindent\textbf{Remark.} The orginal theorem (Theorem 2.1 in \cite{BBKT}) is about local regualr irreducible conservative Dirichlet forms, and the proof applies to resistance forms without difficulty. \vspace{0.15cm}

The above proposition applies easily to self-similar resistance forms.
\begin{lemma}\label{lemma67}
	Let $0<r,s<1$, $0<\lambda<\frac{1}{2}$ and $(\mcE_i,\mcF)\in \mathscr{E}(\lambda,s,r)$ for $i=1,2$. Assume that $\mcE_1(f)\leq \mcE_2(f)$, $\forall f\in \mcF$. Define $\mcE'_\delta=(1+\delta)\mcE_2-\mcE_1$ for $\delta>0$. Then $(\mcE'_\delta,\mcF)\in  \mathscr{E}(\lambda,s,r), \forall \delta>0$.  
\end{lemma}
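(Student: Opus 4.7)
The plan is conceptually very simple: the cited Proposition (from \cite{BBKT}) supplies the resistance-form structure of $\mcE'_\delta$, and then $\mathcal{G}$-symmetry and self-similarity are preserved under the affine combination $\mcE'_\delta=(1+\delta)\mcE_2-\mcE_1$ essentially by linearity. So the proof breaks into verifying the hypothesis of that Proposition, and then two linearity checks.

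First I would verify that the hypotheses of the cited Proposition are satisfied, i.e. that each $(\mcE_i,\mcF)$, $i=1,2$, is a local regular resistance form on $\AG_\lambda$. Regularity is automatic in our setting: $\mcF\subset C(\AG_\lambda)$ contains harmonic extensions of arbitrary finite data and $(\AG_\lambda,R_i)$ is compact with $R_i$ inducing the Euclidean topology (Proposition \ref{prop57}). Locality is where one must do a little work: each $(\mcE_i,\mcF)$ arises (as in the existence proof of Section~6.1) as a $\Gamma$-limit on $C(\blacktriangle)$ of the resistance forms $(\mcE_{\lambda_n,s_n},\mcF_{\lambda_n,s_n})$ on dyadic approximations $\AG_{\lambda_n}$, which are local by their construction in Theorem \ref{thm46}, so Theorem \ref{thm214} propagates locality to the limit. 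With $\mcE_1\leq\mcE_2$ given by assumption, the cited Proposition then yields that $(\mcE'_\delta,\mcF)$ is itself a local regular resistance form.

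It then remains to check the two structural properties in the definition of $\mathscr{E}(\lambda,s,r)$. The $\mathcal{G}$-symmetry is immediate from linearity: for every $\sigma\in\mathcal{G}$ and $f\in\mcF$,
\begin{equation*}
\mcE'_\delta(f\circ\sigma)=(1+\delta)\mcE_2(f\circ\sigma)-\mcE_1(f\circ\sigma)=(1+\delta)\mcE_2(f)-\mcE_1(f)=\mcE'_\delta(f).
\end{equation*}
Self-similarity with the prescribed factors $r_1=r_2=r_3=r,\ r_4=s$ is equally direct: for $f\in\mcF$ we have $f\circ F_{i,\lambda}\in\mcF$ by the self-similarity of either $\mcE_i$, and then
\begin{equation*}
\mcE'_\delta(f)=\sum_{i=1}^4 r_i^{-1}\bigl[(1+\delta)\mcE_2(f\circ F_{i,\lambda})-\mcE_1(f\circ F_{i,\lambda})\bigr]=\sum_{i=1}^4 r_i^{-1}\mcE'_\delta(f\circ F_{i,\lambda}).
\end{equation*}

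I expect the only genuinely delicate step to be the verification of locality in the first stage; this is where one must really use that $\mcE_i$ has been built via the $\Gamma$-convergence machinery of Section~2 and invoke Theorem \ref{thm214}. The rest, including positivity, the Markov property (RF5), and the Hilbert-space axiom (RF2) for the combination $\mcE'_\delta$, is packaged inside the cited Proposition, and the remaining group-invariance and decomposition identities follow from the linear dependence of $\mcE'_\delta$ on $\mcE_1,\mcE_2$.
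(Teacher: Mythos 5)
Your overall strategy is exactly the paper's: the lemma is stated immediately after the cited Proposition precisely because $(\mcE'_\delta,\mcF)$ being a local regular resistance form is handed to you by that Proposition, and the $\mathcal{G}$-symmetry and self-similar decomposition with factors $(r,r,r,s)$ then follow from the linearity of $\mcE'_\delta$ in $(\mcE_1,\mcE_2)$, just as you write. Your two displayed identities are the whole content of the remaining verification.

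There is, however, one genuine flaw in the way you verify the hypothesis of the cited Proposition. You justify locality of $(\mcE_i,\mcF)$ by asserting that each such form ``arises as in the existence proof of Section~6.1 as a $\Gamma$-limit'' of the dyadic forms, and then invoking Theorem \ref{thm214}. But the lemma is applied in the uniqueness part of the proof of Theorem \ref{thm36} to \emph{two arbitrary, possibly distinct} elements of $\mathscr{E}(\lambda,s,r)$; you cannot assume either of them is the particular form produced by the $\Gamma$-convergence construction --- that would presuppose the uniqueness you are trying to establish. The correct (and easier) route is that locality is a consequence of self-similarity alone: if $f,g\in\mcF$ have supports at distance $\varepsilon>0$, iterate the self-similar identity to level $m$ with $2^{-m}<\varepsilon$, so that every cell $F_{w,\lambda}\AG_\lambda$, $w\in W_m$, meets at most one of the two supports; then in $\mcE(f,g)=\sum_{w\in W_m}r_w^{-1}\mcE(f\circ F_{w,\lambda},g\circ F_{w,\lambda})$ each term vanishes because one of the two factors is the zero function on that cell. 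With that replacement your argument is complete and matches the paper's intent.
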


Next, observe that all the estimates in Section 5 can be applied to $\lambda\notin \mathbb{D}$ cases. In particular, Proposition \ref{prop54} holds. We can easily deduce the following result.

\begin{lemma}\label{lemma68}
	Let $(\mathcal{E}_i,\mathcal{F}_i)\in \mathscr{E}(\lambda,s,r),i=1,2$, then we have $\mathcal{F}_1=\mathcal{F}_2$. In addition, define 
	\[\sup(\mcE_1|\mcE_2)=\sup\{\frac{\mathcal{E}_1(f)}{\mathcal{E}_2(f)}:f\in \mcF_1\setminus Constants\},\quad \inf(\mcE_1|\mcE_2)=\inf\{\frac{\mathcal{E}_1(f)}{\mathcal{E}_2(f)}:f\in \mcF_1\setminus Constants\}.\] 
	Then, there exists a constant $1\leq c<\infty$ depending only on the coeffieicents $\lambda,s,r$ such that 
	\[\frac{\sup(\mcE_1|\mcE_2)}{\inf(\mcE_1|\mcE_2)}\leq c.\]
\end{lemma}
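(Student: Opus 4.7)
The plan is to establish the two assertions of Lemma~\ref{lemma68} in succession: first $\mathcal{F}_1 = \mathcal{F}_2$, via uniform comparability of the resistance metrics, and then the bounded ratio by a self-similar reduction.

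For the equality of domains, I would apply Proposition~\ref{prop57} (which, as the authors indicate at the start of Section~5, applies equally to non-dyadic $\lambda$ since its ingredients, Propositions~\ref{prop54} and \ref{prop56}, carry over). This yields the two-sided bound $c_1 d(x,y)^{\eta^*(\lambda,s)} \leq R_i(x,y) \leq c_2 d(x,y)^{\eta_*(\lambda,s)}$ for $i = 1,2$, with constants depending only on $\lambda, s, r$. In particular $R_1 \asymp R_2$ on all of $\AG_\lambda$ and both metrics induce the Euclidean topology. Combined with the $\mathcal{G}$-symmetry, which forces $[\mathcal{E}_i]_{V_0}$ to be a multiple of the form $\mathcal{E}^{(0)}$, and with the shared self-similar structure, one concludes $\mathcal{F}_1 = \mathcal{F}_2$.

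For the ratio bound, I would rescale $\mathcal{E}_2$ so that $[\mathcal{E}_1]_{V_0} = [\mathcal{E}_2]_{V_0} = \mathcal{E}^{(0)}$; rescaling $\mathcal{E}_2$ by a positive constant shifts $\sup(\mathcal{E}_1|\mathcal{E}_2)$ and $\inf(\mathcal{E}_1|\mathcal{E}_2)$ by the same factor and so leaves their ratio invariant. The self-similar decomposition
\[
\rho(f) \;=\; \frac{\mathcal{E}_1(f)}{\mathcal{E}_2(f)} \;=\; \frac{\sum_{i=1}^4 r_i^{-1}\mathcal{E}_1(f\circ F_{i,\lambda})}{\sum_{i=1}^4 r_i^{-1}\mathcal{E}_2(f\circ F_{i,\lambda})}
\]
exhibits $\rho(f)$ as a convex combination of the $\rho(f\circ F_{i,\lambda})$, so both the supremum and infimum of $\rho$ are preserved under self-similar refinement and can be localised to individual cells. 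One then combines Lemma~\ref{lemma27} applied on the three-point set $V_0$ (so $N=3$, and the combinatorial prefactor $N(N-1)/2$ is harmless) with the uniform H\"older resistance estimates on $\partial\blacktriangle$ from Proposition~\ref{prop54} to compare the cell-level traces of the two forms, yielding the desired bound.

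The main obstacle is the passage from the $V_0$-level boundary comparability to a global energy comparability. A direct application of Lemma~\ref{lemma27} on $V_{m,\lambda}$ would produce a combinatorial factor of order $N_m^2 \sim 16^m$ that diverges with $m$, so a crude uniform comparison of traces on large finite subsets is insufficient. The correct argument must combine the self-similar reduction with a careful harmonic-extension comparison that controls the cross-cell energy contributions via the Proposition~\ref{prop54} estimates on $\partial\blacktriangle$, exploiting the $\mathcal{G}$-symmetry so that only finite-dimensional, $N=3$ constants intervene at each scale. Ensuring that the final constant depends only on $\lambda, s, r$ and not on the level of refinement is the technical core of the proof.
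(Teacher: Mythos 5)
There is a genuine gap: you assemble the right ingredients (Proposition \ref{prop54}, Lemma \ref{lemma27}, the self-similar decomposition of the energy) and you correctly diagnose the obstacle — applying Lemma \ref{lemma27} directly on $V_{m,\lambda}$ gives a combinatorial factor that blows up with $m$ — but you then explicitly defer the resolution (``the technical core of the proof'') instead of supplying it. The missing step, which is the heart of the paper's argument, is the exact per-cell localization of the \emph{trace}: setting $V_{m,\lambda,w}=F_{w,\lambda}^{-1}(V_{m,\lambda})\cap \AG_\lambda$, the self-similarity gives the identity
\[
[\mcE_i]_{V_{m,\lambda}}(f)=\sum_{w\in W_m}r_w^{-1}\,[\mcE_i]_{V_{m,\lambda,w}}(f\circ F_{w,\lambda}),
\]
and the crucial geometric fact is that each $V_{m,\lambda,w}$ is a subset of $\partial\blacktriangle$ with $\#V_{m,\lambda,w}\le 6$ \emph{uniformly in $m$ and $w$}. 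This is what turns the metric comparability $c_1^{-1}R_1\le R_2\le c_1R_1$ on $\partial\blacktriangle$ (from Proposition \ref{prop54}) into a form comparability with the fixed constant $15c_1$ (Lemma \ref{lemma27} with $N=6$) on every cell, hence, after summing and letting $m\to\infty$ via Theorem 2.3.7 of \cite{ki3}, into $(15c_1)^{-1}\mcE_2\le\mcE_1\le 15c_1\mcE_2$; both $\mcF_1=\mcF_2$ and the bound $c=(15c_1)^2$ drop out of this single uniform estimate.

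Two of your specific suggestions would not work as stated. First, applying Lemma \ref{lemma27} on $V_0$ with $N=3$ is insufficient: $V_{m,\lambda}\cap F_{w,\lambda}\AG_\lambda$ generally contains more than the three vertices $F_{w,\lambda}V_0$, because the added rotated-triangle cells attach at interior points of the edges (e.g.\ $F_{4,\lambda}p_1\in F_{1,\lambda}\partial_\downarrow\blacktriangle$), so the correct per-cell boundary set has up to $6$ points and one needs the resistance comparison at all of these, which is exactly why Proposition \ref{prop54} is stated for arbitrary pairs in $\partial\blacktriangle$. Second, deducing $\mcF_1=\mcF_2$ from $R_1\asymp R_2$ on all of $\AG_\lambda$ (via Proposition \ref{prop57}) is not justified: pointwise comparability of two resistance metrics on an infinite set does not by itself yield comparability of the associated forms or equality of their domains — it is precisely the localization to finite sets of uniformly bounded cardinality that converts metric bounds into energy bounds with a constant independent of the level.
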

\begin{proof}
	Let $R_i(x,y)=\inf\{\frac{|f(x)-f(y)|^2}{\mcE_i(f)}:f\in \mcF_i\setminus Constants\}$. Without loss of generality, we assume that $[\mathcal{E}_i]_{V_0}(f)=\frac{1}{2}\sum_{j\neq k}\big(f(p_j)-f(p_k)\big)^2$ for $i=1,2$ and $f\in l(V_0)$. 
	
	For convenience, we write $V_{m,\lambda,w}=F_{w,\lambda}^{-1}(V_{m,\lambda})\cap \AG_\lambda$ for $m\geq 0$ and $w\in W_m$. Then, by the self-similarity of $(\mcE_i,\mcF_i),i=1,2$, we have
	\begin{equation}\label{eqn64}
	[\mcE_i]_{V_{m,\lambda}}(f)=\sum_{w\in W_m}r_w^{-1}[\mcE_i]_{V_{m,\lambda,w}}(f\circ F_{w,\lambda}), \quad\forall f\in l(V_{m,\lambda}),i=1,2,
	\end{equation}
	where $r_1=r_2=r_3=r, r_4=s$ and $r_w=r_{w_1}r_{w_2}\cdots r_{w_m}$ for $w=w_1w_2\cdots w_m\in W_m$. By  Proposition \ref{prop54}, there is a constant $c_1>0$ depending only on $\lambda,s$ such that 
	\[c_1^{-1}R_1(x,y)\leq R_2(x,y)\leq c_1R_1(x,y),\qquad\forall x\neq y\in \partial\blacktriangle.\]
	Thus, noticing that $V_{m,\lambda,w}\subset \partial\blacktriangle$ and $\#V_{m,\lambda,w}\leq 6$, by Lemma \ref{lemma27}, for any $m\geq 0, w\in W_m$ and $f\in l(V_{m,\lambda})$,
	\[(15c_1)^{-1}[\mcE_2]_{V_{m,\lambda,w}}(f\circ F_{w,\lambda})\leq [\mcE_1]_{V_{m,\lambda,w}}(f\circ F_{w,\lambda})\leq 15c_1\cdot[\mcE_2]_{V_{m,\lambda,w}}(f\circ F_{w,\lambda}).\]
	By taking the summation in (\ref{eqn64}), and then taking the limit, we have $\mcF_1=\mcF_2$ and
	\[(15c_1)^{-1}\cdot\mcE_2(f)\leq\mcE_1(f)\leq 15c_1\cdot\mcE_2(f),\qquad\forall f\in \mcF_1,\]
	by Theorem 2.3.7 of \cite{ki3}. It suffices to take $c=(15c_1)^2$. 
\end{proof}

Now we are ready to finish the proof of Theorem \ref{thm36}. 

\begin{proof}[Proof of Theorem \ref{thm36}]
	Existence is shown in Section 6.1. It remains to prove the uniqueness.
	
	First, we assume that there are $0<r_1<r_2<1$ such that $\mathscr{E}(\lambda,s,r_i)\neq \emptyset,i=1,2$. We choose $(\mcE_i,\mcF_i)\in \mathscr{E}(\lambda,s,r_i)$ such that 
	\begin{equation}\label{eqn65}
	[\mcE_i]_{V_0}(u)=\sum_{j\neq k}\big(u(p_j)-u(p_k)\big)^2,\quad\forall u\in l(V_0), i=1,2.
	\end{equation}
	We take the same notations as in the proof of Lemma \ref{lemma68}. First, we can easily see that $\mcF_1\subset \mcF_2$. In fact, by applying Proposition \ref{prop54} and a routine argument as before, we see that 
	\[R_1(x,y)\leq c\cdot R_2(x,y), \quad\forall x,y\in \partial\blacktriangle, \]
	for some constant $0<c<\infty$. So that 
	\[
	[\mcE_2]_{V_{m,\lambda}}(f)\leq 15c[\mcE_1]_{V_{m,\lambda}}(f),\quad \forall f\in l(V_{m,\lambda}),
	\]
	by equation (\ref{eqn64}) and Lemma \ref{lemma27} as in the proof Lemma \ref{lemma68}. The claim $\mcF_1\subset \mcF_2$ follows from a limit argument and Theorem 2.3.7 of \cite{ki3}. Next, we fix $u\in l(V_0)\setminus Constants$, and take $h_i$ to be the harmonic extension of $u$ on $\AG_\lambda$ with respect to $(\mcE_i,\mcF_i)$ for $i=1,2$. For a large $m$, we can see that 
	\begin{equation}\label{eqn67}
	r_2^{-m}\sum_{w\in \{1,2,3\}^m}\mcE_2(h_1\circ F_w)<r_1^{-m}\sum_{w\in \{1,2,3\}^m}\mcE_1(h_1\circ F_w).
	\end{equation}
	Let $h'=h_1|_{\bigcup_{w\in \{1,2,3\}^m}F_w\AG_\lambda}$ and let $h$ be the harmonic extension of $h'$ on $\AG_\lambda$ with respect to  $(\mcE_2,\mcF_2)$. Then by (\ref{eqn65}) and (\ref{eqn67}), we have 
	\[[\mcE_2]_{V_0}(u)=\mcE_2(h_2)\leq \mcE_2(h)<\mcE_1(h_1)=[\mcE_1]_{V_0}(u),\]
	which contradicts(\ref{eqn65}). Thus, we have proved the uniqueness of $r$. 
	
	The uniqueness (up to a constant multiplier) of $(\mcE,\mcF)\in \mathscr{E}(r,s,\lambda)$ follows from a same argument as \cite{BBKT}.  Assume there are different $(\mcE_1,\mcF_1)$ and $(\mcE_2,\mcF_2)$ in $\mathscr{E}(r,s,\lambda)$, then by Lemma \ref{lemma68}, we have $\mcF_1=\mcF_2$. Without loss of generality, we assume
	\[\sup(\mcE_1|\mcE_2)>\inf(\mcE_1|\mcE_2)=1.\]
	Then, according to Lemma \ref{lemma67}, by defining $\mcE'_\delta(f)=(1+\delta)\mcE_1(f)-\mcE_2(f)$, we have another form $(\mcE'_\delta,\mcF)$ in $\mathscr{E}(\lambda,s,r)$. Clearly, we have 
	\[
	\begin{cases}
	\sup(\mcE'_\delta|\mcE_2)=(1+\delta)\sup(\mcE_1|\mcE_2)-1,\\ \inf(\mcE'_\delta|\mcE_2)=(1+\delta)\inf(\mcE_1|\mcE_2)-1=\delta.
	\end{cases}
	\]
	So we have 
	\[\frac{\sup(\mcE'_\delta|\mcE_2)}{\inf(\mcE'_\delta|\mcE_2)}=\sup(\mcE_1|\mcE_2)+\delta^{-1}\big(\sup(\mcE_1|\mcE_2)-1\big).\]
	As $\delta$ can be arbitrarily small, this contradicts Lemma \ref{lemma68}.
\end{proof}

Finally, we finish the proof of Theorem \ref{thm38}.

\begin{proof}[Proof of Theorem \ref{thm38}]
	Let $\lambda_n\to\lambda\in (0,\frac{1}{2})$ and $s_n\to s \in (0,1)$. By Theorem \ref{thm213}, it suffices to show that $R_{\lambda_n,s_n}\rightarrowtail R_{\lambda,s}$. Assume not, there are $\varepsilon>0$, a subsequence $\{n_k\}_{k\geq 1}$ and $x_k\in \AG_{\lambda_{n_k}},y_k\in \AG_{\lambda_{n_k}}$, so that $x_k\to x,y_k\to y$ and 
	\[|R_{\lambda,s}(x,y)-R_{\lambda_{n_k},s_{n_k}}(x_k,y_k)|\geq \varepsilon,\quad\forall k\geq 1. \]
	On the other hand, by the proof in Section 6.1 and the uniqueness statement of Theorem \ref{thm36}, there is a further subsequence $\{n_{k_l}\}_{l\geq 1}$ so that $R_{\lambda_{n_{k_l}},s_{n_{k_l}}}\rightarrowtail R_{\lambda,s}$. A contradiction. 
\end{proof}

\noindent\textbf{Remark.} The fact that $(\mcE_{\lambda_n,s_n},\mcF_{\lambda_n,s_n})$ $\Gamma$-converges to $(\mcE_{\lambda,s},\mcF_{\lambda,s})$ on $C(\blacktriangle)$ can be proven directly with Theorem 8.3 of the book \cite{D}, noticing that $C(\blacktriangle)$ is a metric space.

\section{About irregular cases}
We may lossen the conditions (C1) in Section 2. \vspace{0.1cm}

\noindent (C1'). $(B,d)$ is a locally compact seperable metric space, $\{A_n\}_{n\geq 1}$ is a sequence of Borel sets in $B$ and $A$ is a Borel set in $B$. For each $m$, there is compact $A_{n,m}\subset A_n$ for each $n\geq 1$, and there is $A_{\infty,m}\subset A$, so that $\delta(A_{n,m},A_{\infty,m})\to 0$ as $n\to\infty$. In addition, $A_{n}=\bigcup_{m=1}^\infty A_{n,m}$ for each $n\geq 1$ and $A=\bigcup_{m=1}^\infty A_{\infty,m}$. \vspace{0.15cm}

With condition (C1'), for $f_n\in l(A_n),n\geq 1$ and $f\in l(A)$ such that $f_n|_{A_{n,m}}\in C(A_{n,m})$ and $f|_{A_{\infty,m}}\in C(A_{\infty,m})$, $\forall m\geq 1$,  we write $f_n\stackrel{,}{\rightarrowtail}f$ if $f_n|_{A_{n,m}}\rightarrowtail f|_{A_{\infty,m}}$ for all $m\geq 1$. The definition of $\stackrel{,}{\rightarrowtail}$ may depend on the choice of $A_{n,m}$. Also, $f_n\stackrel{,}{\rightarrowtail}f$ does not necessarily imply that $f_n\rightarrowtail f$, where $f_n\rightarrowtail f$ still means that $f_n(x_n)\to f(x)$ for any $x_n\to x$.

The example that we are most interested in this paper is of course still $\AG_\lambda,\lambda\in (0,\frac{1}{2})$. Let $\lambda_n\to \lambda\in (0,\frac{1}{2})$, $A_n=\bigcup_{w\in W_*} F_{w,\lambda_n}\partial\blacktriangle$, $A=\bigcup_{w\in W_*} F_{w,\lambda}\partial\blacktriangle$, $A_{n,m}=\bigcup_{w\in W_m} F_{w,\lambda_n}\partial\blacktriangle$ and $A_{\infty,m}=\bigcup_{w\in W_m} F_{w,\lambda}\partial\blacktriangle$. Then, we get a particular example satisfying (C1'). 

The main result in Section 2.3 still holds without difficulty. In particular, we have the following result.

\begin{theorem}\label{thm71}
	Assume (C1'). Let $R_n\in\mathcal{RM}(A_n)$ for $n\geq 1$ and $R\in \mathcal{M}(A)$. Assume that $R_n\in C(A_{n,m}\times A_{n,m}),\forall n,m\geq1$ and $R_n\stackrel{,}{\rightarrowtail} R$, then we have and $R\in C(A_{\infty,m}\times A_{\infty,m}),\forall m\geq 1$, and $R\in \mathcal{RM}(A)$. As a consequence, $(A,R)$ is separable.
	
	Let $(\mcE_n,\mcF_n)$ be the resistance form associated with $R_n$ for $n\geq 1$, and let $(\mcE,\mcF)$ be the resistance form associated with $R$. Then, we have $(\mcE_n,\mcF_n)$ converges to $(\mcE,\mcF)$ in the following sense:
	
    (a''). If $f_n\stackrel{,}{\rightarrowtail} f$, where $f_n\in l(A_n),\forall n\geq 1$ and $f_n|_{A_{n,m}}\in C(A_{n,m}),\forall n,m\geq 1$, then
    \[\mcE(f)\leq \liminf_{n\to\infty} \mcE_n(f_n).\]

    (b''). For any $f\in l(A)$ such that $f|_{A_{\infty,m}}\in C(A_{\infty,m})$, $\forall m\geq 1$, there exists a sequence $\{f_n\}_{n\geq 1}$ such that $f_n\in l(A_n)$, $f_n|_{A_{n,m}}\in C(A_{n,m}),\forall n,m\geq 1$, $f_n\stackrel{,}{\rightarrowtail} f$ and 
    \[\mcE(f)=\lim_{n\to\infty} \mcE_n(f_n).\] 
\end{theorem}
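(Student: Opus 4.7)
The plan is to reduce everything to the compact-set setting of Theorems \ref{thm29} and \ref{thm213} by localizing on the pieces $A_{n,m}$ and $A_{\infty,m}$, then glue across $m$. Fix $m\geq 1$. Since $\delta(A_{n,m},A_{\infty,m})\to 0$ with all sets compact, and $R_n|_{A_{n,m}\times A_{n,m}}\rightarrowtail R|_{A_{\infty,m}\times A_{\infty,m}}$ by assumption, Lemma \ref{lemma22}(b) gives $R\in C(A_{\infty,m}\times A_{\infty,m})$, and the argument of Theorem \ref{thm29} combined with Lemma \ref{lemma28}(a) gives $R|_{A_{\infty,m}}\in\mathcal{RM}(A_{\infty,m})$. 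Since any finite $V\subset A$ lies in some $A_{\infty,m}$, this upgrades to $R\in\mathcal{RM}(A)$. For separability: each $(A_{\infty,m},d)$ is compact, and the continuous bijection $(A_{\infty,m},d)\to (A_{\infty,m},R)$ from a compact space to a Hausdorff space is a homeomorphism, so $(A_{\infty,m},R)$ is separable; then $(A,R)=\bigcup_m(A_{\infty,m},R)$ is a countable union of separable spaces, hence separable.

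For (a''), the strategy mirrors the proof of (a') in Theorem \ref{thm213}. Choose a nested family $V_1\subset V_2\subset\cdots$ of finite subsets of $A$ with $V_k\subset A_{\infty,k}$ and $V_*=\bigcup_kV_k$ dense in $(A,R)$; by Theorem 2.3.7 of \cite{ki3},
\[
\mcE(f)=\lim_{k\to\infty}[\mcE]_{V_k}(f|_{V_k}).
\]
For each fixed $k$, pick finite $V_{k,n}\subset A_{n,k}$ approximating $V_k$ pointwise so that $\#V_{k,n}=\#V_k$ for large $n$. Lemma \ref{lemma28}(b) together with $R_n|_{A_{n,k}\times A_{n,k}}\rightarrowtail R|_{A_{\infty,k}\times A_{\infty,k}}$ implies the resistance forms $[\mcE_n]_{V_{k,n}}$ converge to $[\mcE]_{V_k}$ entrywise, and using $f_n|_{A_{n,k}}\rightarrowtail f|_{A_{\infty,k}}$ gives $[\mcE_n]_{V_{k,n}}(f_n|_{V_{k,n}})\to[\mcE]_{V_k}(f|_{V_k})$. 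Since $[\mcE_n]_{V_{k,n}}(f_n|_{V_{k,n}})\leq\mcE_n(f_n)$ by the minimizing property of traces, we conclude $[\mcE]_{V_k}(f|_{V_k})\leq\liminf_n\mcE_n(f_n)$; taking the supremum in $k$ gives (a'').

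For (b''), the recovery sequence is built by harmonic extension. With $V_k$ and $V_{k,n}$ as above, choose $m_n\to\infty$ slowly, and let $f_n\in\mcF_n$ be the $\mcE_n$-harmonic extension of the function $\phi_n^*(f|_{V_{m_n}})$ on $V_{m_n,n}$, where $\phi_n:V_{m_n,n}\to V_{m_n}$ is the natural bijective approximation. The identity $\mcE_n(f_n)=[\mcE_n]_{V_{m_n,n}}\bigl(\phi_n^*(f|_{V_{m_n}})\bigr)$ together with the convergence used in (a'') yields $\mcE_n(f_n)\to\mcE(f)$ for $m_n$ tending to infinity sufficiently slowly. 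Each $f_n$ is $R_n$-continuous on $A_n$ and hence continuous on every $A_{n,m}$ (since $R_n\in C(A_{n,m}\times A_{n,m})$), verifying the regularity requirement on $f_n$.

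The main obstacle is verifying $f_n\stackrel{,}{\rightarrowtail}f$, i.e.\ $f_n|_{A_{n,m}}\rightarrowtail f|_{A_{\infty,m}}$ for every fixed $m$. Fix such an $m$. The harmonic extensions satisfy $|f_n(x)-f_n(y)|^2\leq\mcE_n(f_n)\cdot R_n(x,y)$ for $x,y\in A_n$, and $\mcE_n(f_n)$ is uniformly bounded; combined with the equicontinuity of $\{R_n|_{A_{n,m}\times A_{n,m}}\}_{n\geq 1}$ furnished by Lemma \ref{lemma22}(b) applied to the $\rightarrowtail$-convergence of the restricted resistance metrics, this yields equicontinuity of $\{f_n|_{A_{n,m}}\}_{n\geq 1}$ in the metric $d$. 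Uniform boundedness follows from the Markov truncation argument used after Theorem \ref{thm213}. Lemma \ref{lemma22}(a) then extracts subsequential $\rightarrowtail$-limits in $C(A_{\infty,m})$; any such limit must coincide with $f$ on the dense set $V_*\cap A_{\infty,m}$ by construction, hence equals $f|_{A_{\infty,m}}$. A standard subsequence-of-subsequence argument gives $f_n|_{A_{n,m}}\rightarrowtail f|_{A_{\infty,m}}$, completing the proof. The delicate point throughout is that the equicontinuity bounds are available only piecewise (on each $A_{n,m}$), which is precisely why the weaker convergence mode $\stackrel{,}{\rightarrowtail}$ is the natural one in the setting of (C1').
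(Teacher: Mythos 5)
Your proposal is correct and takes essentially the same route as the paper, which in fact omits the proof entirely with the remark that it is "exactly the same as that of Theorem \ref{thm213}"; your write-up is precisely the localization of the arguments of Theorems \ref{thm29} and \ref{thm213} to the compact pieces $A_{n,m}$, $A_{\infty,m}$, followed by gluing over $m$. The one point worth making explicit (implicitly assumed both by you and by the paper's setup, and satisfied in the application) is that the pieces can be taken nested, so that any finite subset of $A$, and any pair of points of $A_n$, eventually lies in a single $A_{\infty,m}$ resp.\ $A_{n,m}$.
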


The proof of Theorem \ref{thm71} is exactly the same as that of Theorem \ref{thm213}. We omit it here. 

In particular, by applying the result for $\lambda\in (0,\frac{1}{2}),s\in [1,\infty)$ cases, we can immediate get the existence of a self-similar resistance form on  $\bigcup_{w\in W_*} F_{w,\lambda}\partial\blacktriangle$ by a same proof as in Section 6.1.

To make the resistance forms on $\bigcup_{w\in W_*} F_{w,\lambda}\partial\blacktriangle$ a strongly local regular Dirichlet form on $\AG_\lambda$, one can follow a standard argument developed by Kumagai \cite{kum2}, see also Section 3.4 of \cite{ki3}. One preparation we need here is to show that piecewise harmonic functions, which means the minimal energy extension of $f\in l(V_{m,\lambda})$ for some $m\geq 1$, extend to be continuous functions on $\AG_\lambda$. In fact, it is easy to see that for each point in $x\in\AG_{\lambda}$, $\{|f(z)-f(z')|:z,z'\in B_\delta(x)\cap \bigcup_{w\in W_*}F_w\partial\blacktriangle\}\to 0$ as $\delta\to 0$. We also need additonal assumptions on the measure $\mu_\lambda$, which should be similar to that in Section 3.4 of \cite{ki3}, noticing that we have the estimates Lemma \ref{lemma27} and Proposition \ref{prop54} to get a lower bound estimate of the energy (added at each level).  

Finally, the uniqueness depends only on Proposition \ref{prop54}, where the condition $s<1$ was not involved. 

\section*{Acknowledgments}
The author is grateful to Professor Robert S. Strichartz for his continued support and encouragement for me to work on this problem.

\bibliographystyle{amsplain}

\end{document}